\newcommand{\gam}{\gamma}
\newcommand{\eps}{\varepsilon}
\newcommand{\kap}{\kappa}
\newcommand{\sig}{\sigma}
\newcommand{\Gam}{\Gamma}
\newcommand{\Del}{\Delta}
\newcommand{\cH}{{\mathcal H}}
\newcommand{\bg}{{\mathbf g}}
\newcommand{\C}{{\mathbb C}}
\newcommand{\F}{{\mathbb F}}
\newcommand{\cS}{{\mathcal S}}
\newcommand{\hf}{{\hat f}}
\newcommand{\hh}{{\hat h}}
\newcommand{\ochi}{{\overline\chi}}
\newcommand{\opsi}{{\overline\psi}}
\newcommand{\hG}{{\widehat G}}
\newcommand{\hH}{{\widehat H}}
\newcommand{\hR}{{\widehat R}}
\newcommand{\wh}{\widehat}
\newcommand{\longc}{,\dotsc,}
\newcommand{\longp}{+\dotsb+}
\newcommand{\longu}{\cup\dotsb\cup}
\newcommand{\sbs}{\subset}
\newcommand{\seq}{\subseteq}
\newcommand{\stm}{\setminus}
\renewcommand{\cline}[4]{\color{#1}{\line(#2,#3){#4}}}
\newcommand{\sub}[1]{_{\substack{#1}}}
\DeclareMathOperator{\supp}{supp}
\DeclareMathOperator{\PG}{PG}
\theoremstyle{plain}
\newtheorem{lemma}{Lemma}
\newtheorem{theorem}{Theorem}
\newtheorem{corollary}{Corollary}
\newtheorem{conjecture}{Conjecture}
\newtheorem{alphatheorem}{Theorem}
\theoremstyle{remark}
\newtheorem{remark}{Remark}
\newcommand{\refb}[1]{\cite{b:#1}}
\newcommand{\refe}[1]{\eqref{e:#1}}
\newcommand{\refc}[1]{\ref{c:#1}}
\newcommand{\reff}[1]{\ref{f:#1}}
\newcommand{\refj}[1]{\ref{j:#1}}
\newcommand{\refl}[1]{\ref{l:#1}}
\newcommand{\refs}[1]{\ref{s:#1}}
\newcommand{\reft}[1]{\ref{t:#1}}
\title[Uncertainty in finite planes]%
  {Uncertainty in finite planes}
\author[Andr\'as Bir\'o]{Andr\'as Bir\'o$^\dag$}
\email{biro.andras@renyi.mta.hu}
\address{A. R\'enyi Institute of Mathematics, Hungarian Academy of Sciences,
  1053 Budapest, Re\'altanoda u. 13--15, Hungary}
\thanks{${}^\dag$ Research partially supported by NKFIH (National Research,
  Development and Innovation Office) grants K-109789, K~119528 and by the MTA
  R\'enyi Int\'ezet Lend\"ulet Automorphic Research Group.}
\author{Vsevolod F. Lev}
\email{seva@math.haifa.ac.il}
\address{Department of Mathematics, The University of Haifa at Oranim,
  Tivon 36006, Israel}
\keywords{Uncertainty principle, Fourier transform}
\subjclass[2000]{Primary: 42A99; secondary: 65T50, 20K01}
\begin{document}
\baselineskip=16pt

\begin{abstract}
We establish a number of uncertainty inequalities for the additive group of a
finite affine plane, showing that for $p$ prime, a nonzero function
$f\colon\F_p^2\to\C$ and its Fourier transform $\hf\colon\wh{\F_p^2}\to\C$
cannot have small supports simultaneously. The ``baseline'' of our
investigation is the well-known Meshulam's bound, which we sharpen, for the
particular groups under consideration, taking into account not only the sizes
of the support sets $\supp f$ and $\supp\hf$, but also their structure.

Our results imply in particular that, with some explicitly classified
exceptions, one has $|\supp f||\supp\hf|\ge3p(p-2)$; in comparison, the
classical uncertainty inequality gives $|\supp f||\supp\hf|\ge p^2$.
\end{abstract}

\maketitle

\section{Introduction and background}\label{s:intro}

The uncertainty principle asserts that a nonzero function and its Fourier
transform cannot be both highly concentrated on small sets. In this paper we
will be concerned with Fourier analysis on finite abelian groups with the
uniform probability measure, the most known and classical realization of the
general uncertainty principle in these settings being as follows (see, for
instance, \cite{b:ds,b:sm,b:te}).
\begin{alphatheorem}\label{t:basic}
If $G$ is a finite abelian group, then for any nonzero function $f\in L(G)$
one has
  $$ |\supp f||\supp\hf| \ge |G|. $$
\end{alphatheorem}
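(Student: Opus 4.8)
\bigskip
\noindent\emph{Proof proposal.}
The plan is to run the classical Donoho--Stark duality argument, which deduces the bound from two elementary ``$L^1$--$L^\infty$'' estimates, one for the Fourier transform and one for its inverse. Fix the normalization in which $G$ carries the uniform probability measure and the dual group $\wh G$ carries counting measure; thus $\hf(\chi)=\frac1{|G|}\sum_{x\in G}f(x)\ochi(x)$, Parseval reads $\frac1{|G|}\sum_{x}|f(x)|^2=\sum_{\chi}|\hf(\chi)|^2$, and the inversion formula is $f(x)=\sum_{\chi\in\wh G}\hf(\chi)\chi(x)$. Write $A:=\supp f$ and $B:=\supp\hf$; since $f\ne0$ we have $A\ne\est$, and $f$ being nonzero also forces $\hf\ne0$ so that $B\ne\est$ — though only $A\ne\est$ is needed below.

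First I would record the two trivial bounds. From the definition of $\hf$ and the triangle inequality, for every character $\chi$,
  $$ |\hf(\chi)| \le \frac1{|G|}\sum_{x\in G}|f(x)| = \frac1{|G|}\sum_{x\in A}|f(x)| \le \frac{|A|}{|G|}\,\|f\|_\infty , $$
so $\|\hf\|_\infty\le\frac{|A|}{|G|}\|f\|_\infty$, where $\|\cdot\|_\infty$ is the sup-norm (on $G$ or on $\wh G$ as appropriate). Dually, from the inversion formula and the triangle inequality, for every $x\in G$,
  $$ |f(x)| \le \sum_{\chi\in\wh G}|\hf(\chi)| = \sum_{\chi\in B}|\hf(\chi)| \le |B|\,\|\hf\|_\infty , $$
so $\|f\|_\infty\le|B|\,\|\hf\|_\infty$. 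Combining the two displays gives $\|f\|_\infty\le|B|\cdot\frac{|A|}{|G|}\,\|f\|_\infty$, and since $f\ne0$ we have $\|f\|_\infty>0$; dividing through yields $|A|\,|B|\ge|G|$, which is the assertion.

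As for where the difficulty lies: there is essentially no deep obstacle in this statement — the only point requiring care is bookkeeping of the normalization, i.e.\ making sure the probability measure sits on $G$ and counting measure on $\wh G$ so that inversion and Parseval hold with the constants above; with the reverse convention the two elementary bounds pick up reciprocal factors of $|G|$ and the chain still closes to the same conclusion. One could alternatively argue by linear algebra, viewing a function supported on $A$ as an element of an $|A|$-dimensional subspace and bounding the rank of a suitable submatrix of the character table, but the $L^1$--$L^\infty$ route above is the shortest and is the one I would write out. It is also the natural ``baseline'' to return to later: the refinements in the body of the paper come precisely from replacing the crude steps $\sum_{x\in A}|f(x)|\le|A|\,\|f\|_\infty$ and $\sum_{\chi\in B}|\hf(\chi)|\le|B|\,\|\hf\|_\infty$ by estimates that see the structure of $A$ and $B$, not merely their sizes.
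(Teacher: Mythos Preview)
Your proof is correct: it is the standard Donoho--Stark $L^1$--$L^\infty$ argument, and the bookkeeping of the normalization is handled properly.

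The paper does not give a self-contained proof of this statement; it is quoted as classical background with references. However, in Section~\refs{observations} the paper does sketch a genuinely different route: an inductive proof on $|G|$, choosing a nonzero proper subgroup $H<G$ and using the mechanism behind Lemma~\refl{SXmn} (restricting $f$ to a coset and applying the induction hypothesis to $H$ and to $G/H$) to obtain $|X|\ge n_X|H|/n_S$ and $|S|\ge n_S|H^\perp|/n_X$, which multiply to $|S||X|\ge|H||H^\perp|=|G|$. That argument needs a separate treatment of the base case where $G$ is prime, and it is more structural --- it already sees the subgroup lattice --- whereas your argument is shorter, entirely self-contained, and normalization-agnostic. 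What the inductive approach buys is that it foreshadows exactly the machinery (Lemma~\refl{SXmn}) on which the paper's main theorems rest; what your approach buys is a two-line proof with no induction and no case analysis.
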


In the statement of Theorem~\reft{basic} and throughout, we denote by $L(G)$
the vector space of all complex-valued functions on the finite abelian group
$G$, and by $\hf$ the Fourier transform of a function $f\in L(G)$ with
respect to the unform probability measure; that is,
  $$ \hf(\chi)=\frac1{|G|}\,\sum_{g\in G} f(g)\,\ochi(g),\quad \chi\in\hG $$
where $\hG$ is the group dual to $G$, and $\ochi$ is the character conjugate
to $\chi$. (See Section~\refs{notation} for the summary of notation used.)

Theorem~\reft{basic} can be significantly improved for groups of prime order,
which we identify with the additive groups of the corresponding fields and
denote $\F_p$.
\begin{alphatheorem}[Bir\'o~\refb{bi}, Tao~\refb{ta}]\label{t:birotao}
If $p$ is a prime, then for any nonzero function $f\in L(\F_p)$ one has
  $$ |\supp f|+|\supp\hf| \ge p+1. $$
\end{alphatheorem}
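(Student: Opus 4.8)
The plan is to reduce the inequality to Chebotarev's theorem on the nonvanishing of the minors of the Fourier matrix over $\F_p$, and then to outline a proof of that classical result. Fix $\zeta=e^{2\pi i/p}$ and identify $\wh{\F_p}$ with $\F_p$ in such a way that $\hf(\xi)=\frac1p\sum_{x\in\F_p}f(x)\zeta^{-x\xi}$. Suppose, for contradiction, that $f\ne0$ but $|\supp f|+|\supp\hf|\le p$, and put $a=|\supp f|$, $b=|\supp\hf|$, $\supp f=\{x_1\longc x_a\}$. Since $\hf$ vanishes off its support, it vanishes on a set of at least $p-b\ge a$ elements; pick $a$ of them, say $\xi_1\longc\xi_a$. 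The equations $\hf(\xi_j)=0$ read $\sum_{l=1}^af(x_l)\zeta^{(-x_l)\xi_j}=0$ for $j=1\longc a$, a homogeneous linear system in $f(x_1)\longc f(x_a)$ with a nonzero solution; hence its coefficient matrix $\bigl(\zeta^{(-x_l)\xi_j}\bigr)_{l,j}$ is singular. But $-x_1\longc-x_a$ are pairwise distinct in $\F_p$, as are $\xi_1\longc\xi_a$, so this matrix is (a rearrangement of) an $a\times a$ submatrix of the full Fourier matrix $\bigl(\zeta^{st}\bigr)_{s,t\in\F_p}$; by Chebotarev's theorem it is nonsingular --- a contradiction. (When $a=p$ the claim is trivial, so only submatrices of order at most $p-1$ are needed.)

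It remains to sketch Chebotarev's theorem: for distinct $a_1\longc a_k$ and distinct $b_1\longc b_k$ in $\{0\longc p-1\}$ with $k\le p-1$, one has $\det\bigl(\zeta^{a_ib_j}\bigr)\ne0$. The plan is to study the polynomial $P(x)=\det\bigl(x^{a_ib_j}\bigr)\in\mathbb{Z}[x]$, whose value at $\zeta$ is the determinant in question. Substituting $y_i=x^{a_i}$ turns the matrix into the generalized Vandermonde $\bigl(y_i^{b_j}\bigr)$, so by the classical bialternant formula $P(x)=\pm\,V(x)\,S(x)$, where $V(x)=\prod_{i<i'}\bigl(x^{a_i}-x^{a_{i'}}\bigr)$ is the ordinary Vandermonde in the $y_i$ and $S(x)=s_\lambda\bigl(x^{a_1}\longc x^{a_k}\bigr)$, with $s_\lambda$ the Schur polynomial of a partition $\lambda$ explicitly determined by $b_1\longc b_k$. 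Now assume $P(\zeta)=0$. Then the minimal polynomial $\Phi_p(x)=1+x\longp x^{p-1}$ of $\zeta$ divides $P$ in $\mathbb{Z}[x]$, by Gauss's lemma; and since $\Phi_p$ is irreducible while $\Phi_p\nmid V$ --- each factor $x^{a_i}-x^{a_{i'}}$ is, up to a power of $x$, of the form $x^m-1$ with $0<m<p$, and $\Phi_p\mid x^m-1$ only if $p\mid m$ --- we conclude $\Phi_p(x)\mid S(x)$ in $\mathbb{Z}[x]$.

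The contradiction is extracted by reduction modulo $p$. Since $\Phi_p(x)\equiv(x-1)^{p-1}\pmod p$, we get $(x-1)^{p-1}\mid S(x)$ in $\F_p[x]$ (writing $S$ also for its reduction), and in particular $S(1)\equiv0\pmod p$. On the other hand $S(1)=s_\lambda(1\longc1)$, which by the Weyl dimension formula equals $\prod_{1\le i<j\le k}\frac{\beta_i-\beta_j}{j-i}$, where $\beta_1>\dotsb>\beta_k$ is the decreasing rearrangement of $b_1\longc b_k$ (and $\lambda_j=\beta_j-(k-j)$). The numerator is a product of differences of distinct elements of $\{0\longc p-1\}$, each of absolute value $<p$, hence a product of units modulo $p$; the denominator is a product of elements of $\{1\longc k-1\}\seq\{1\longc p-2\}$, likewise units modulo $p$. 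Therefore $S(1)\not\equiv0\pmod p$ --- the desired contradiction. This proves Chebotarev's theorem, and with it $|\supp f|+|\supp\hf|\ge p+1$.

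I expect the real difficulty to lie in the Chebotarev step --- and, within it, in recognising that one must not try to bound directly the multiplicity of $x=1$ as a root of $P$, since that multiplicity is exactly $\binom k2$, which already exceeds $p-1$ once $k$ is large. The factorization $P=\pm\,V\,S$ is the point: it forces $\Phi_p$ to divide the Schur factor $S$ (not the Vandermonde factor $V$), and the value $S(1)=s_\lambda(1\longc1)$ is then, conveniently, a ratio of products of nonzero residues modulo $p$. Everything else --- the Fourier sign conventions, the precise shape of $\lambda$, and the evaluation of $s_\lambda(1\longc1)$ --- is routine bookkeeping.
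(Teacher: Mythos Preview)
The paper does not prove Theorem~\reft{birotao}; it is quoted as a known result of Bir\'o and Tao and used as a black box (notably as the base case of Theorem~\reft{meshulam} and inside Lemma~\refl{SXmn}). So there is no ``paper's own proof'' to compare against.

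Your argument is correct. The reduction to Chebotarev's theorem is exactly Tao's route: if $|\supp f|+|\supp\hf|\le p$ with $a=|\supp f|\le p-1$, pick $a$ zeros of $\hf$ and obtain a singular $a\times a$ minor of the DFT matrix, contradicting Chebotarev. Your sketch of Chebotarev is also sound. The bialternant identity $\det(y_i^{b_j})=\pm s_\lambda(y)\prod_{i<j}(y_i-y_j)$ specializes at $y_i=x^{a_i}$ to $P(x)=\pm V(x)S(x)$ in $\mathbb Z[x]$; irreducibility of $\Phi_p$ forces $\Phi_p\mid S$ once $\Phi_p\nmid V$ (and indeed each factor of $V$ is, up to a power of $x$, of the form $x^m-1$ with $0<m<p$). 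Reduction mod $p$ then gives $p\mid S(1)=s_\lambda(1,\dotsc,1)$, while the Weyl dimension formula expresses this integer as a ratio of products of nonzero residues mod $p$ --- a contradiction. Two small points worth making explicit in a final write-up: (i) $s_\lambda$ has nonnegative integer coefficients, so $S(1)\in\mathbb Z$ and the $p$-divisibility statement is meaningful; (ii) the restriction $k\le p-1$ (which you note) is what guarantees the denominator factors $j-i$ are all in $\{1,\dotsc,p-2\}$.
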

The inequality of Theorem~\reft{birotao} was established by the first-named
author of the present paper, who has contributed it as a problem to the year
1998 Mikl\'os Schweitzer mathematical competition, and then independently
rediscovered by Tao. Tao has also shown that the inequality is sharp, and
provided some applications.

Theorem~\reft{birotao} has been extended by Meshulam onto arbitrary finite
abelian groups.
\begin{alphatheorem}[Meshulam~\refb{me}]\label{t:meshulam}
Suppose that $G$ is a finite abelian group, and $f\in L(G)$. If $d_1<d_2$ are
two consecutive divisors of $|G|$ such that $d_1\le|\supp f|\le d_2$, then
  $$ |\supp\hf| \ge \frac{|G|}{d_1d_2}\,(d_1+d_2-|\supp f|). $$
\end{alphatheorem}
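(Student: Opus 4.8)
The plan is to argue by induction on $|G|$, peeling off a subgroup of $\wh G$ and decomposing $\hf$ along its cosets, with Theorem~\reft{basic} and the prime‑order case, Theorem~\reft{birotao}, serving as the base. Write $A=\supp f$, $B=\supp\hf$, $s=|A|$ and $n=|G|$. We may assume $d_1<s<d_2$: if $s\in\{d_1,d_2\}$ then $\frac n{d_1d_2}(d_1+d_2-s)=n/s$ and the bound is Theorem~\reft{basic}, while if $d_2=d_1+1$ there is nothing beyond that; and if $n$ is prime the bound is exactly Theorem~\reft{birotao}, with $d_1=1$, $d_2=n$. Consecutiveness of $d_1,d_2$ forces $d_2/d_1$ to be prime, so $\wh G$ has subgroups of order $d_2$ and of order $d_2/d_1$.

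The mechanism is a classical coset decomposition. For a subgroup $H\le\wh G$ with annihilator $K=H^\perp\le G$ one has $|K|=n/|H|$, the quotient $G/K$ is dual to $H$, and $\wh G/H$ is dual to $K$; grouping the Fourier sum by cosets of $K$ shows that, for every $\psi\in\wh G$, the restriction of $\hf$ to the coset $\psi H$ is a unimodular multiple of the Fourier transform on $G/K$ of the function $g_\psi\in L(G/K)$ obtained by pushing $x\mapsto f(x)\opsi(x)$ forward to $G/K$. Three facts are used repeatedly: $g_\psi$ — hence the vanishing and the support size of $\hf|_{\psi H}$ — depends only on the coset $\psi H$; $\hf|_{\psi H}\equiv 0$ precisely when $B$ misses $\psi H$; and $|\supp g_\psi|\le c$, where $c$ is the number of cosets of $K$ that meet $A$ (so in particular $c\le s$). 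Now take $H$ of order $d_2$ if $d_2<n$, and of order $d_2/d_1$ if $d_2=n$; in either case $|G/K|<n$, so the induction hypothesis (or Theorem~\reft{birotao}) is available on $G/K$, and $d_1,d_2$ are still consecutive divisors of $|G/K|$ in the first case. If $B$ meets every coset of $H$, then for each coset one bounds $|\hf|_{\psi H}|$ from below using $|\supp g_\psi|\le c\le s$ and the consecutive divisors of $|G/K|$ bracketing $|\supp g_\psi|$: when $|\supp g_\psi|\ge d_1$ one may use $d_1,d_2$ themselves, and when $|\supp g_\psi|<d_1$ the resulting bound is at least $d_2/d_1$, which already exceeds $\frac1{d_1}(d_1+d_2-s)$ since $s>d_1$. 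Either way $|B\cap\psi H|\ge\frac1{d_1}(d_1+d_2-s)$, and summing over the $n/|H|$ cosets gives exactly $|B|\ge\frac n{d_1d_2}(d_1+d_2-s)$. (When $d_2=n$, $d_1$ is the largest proper divisor of $n$, so $d_1^2\ge n$ and $s>d_1\ge n-d_1(d_1-1)$; since each contributing coset gives $|B\cap\psi H|\ge1$, one gets $|B|\ge n/|H|=d_1\ge\frac1{d_1}(d_1+n-s)$.)

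The step I expect to be the real obstacle is the complementary case, where $B$ misses some coset $\psi_0H$. Then $g_{\psi_0}\equiv 0$, which forces every coset of $K$ meeting $A$ to contain at least two points of $A$; hence $c\le s/2$, and so \emph{all} the surviving push‑forwards $g_\psi$ have support at most $s/2$, sharpening every per‑coset estimate. There is thus a trade‑off to exploit: the fewer cosets of $H$ contribute, the smaller $c$ must be, and the larger each contribution. Making this quantitative requires controlling how $B$ can be confined to a small union of cosets of $H$: by Fourier duality, $B$ lying in few cosets of $H$ forces $A$ to lie close to a union of cosets of $K$ — in the extreme case where $B$ is contained in a single coset of $H$, $A$ is exactly a union of $c$ full cosets of $K$, $s=cn/d_2$, and one recovers the bound by applying the induction hypothesis to the single surviving $g_\psi$, whose support has size $c$. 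I expect the general case to be handled by recursing on this partial structure of $A$ — equivalently, by choosing $H$ adaptively as a subgroup of maximal order all of whose cosets $B$ meets — and the delicate points will be keeping the divisor bookkeeping consistent through the recursion and checking that the per‑coset gain always outweighs the loss of cosets, the estimate being tight exactly where $d_2/d_1$ is small.

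For orientation, here is the linear‑algebraic shadow of the whole argument: the hypotheses force the $(\wh G\stm B)\times A$ submatrix $M=(\ochi(a))_{\chi\notin B,\ a\in A}$ of the character table to kill the nonzero vector $(f(a))_{a\in A}$, so $\operatorname{rank}M<|A|$; for $n$ prime, Chebotarev's theorem on roots of unity makes every square submatrix of the character table nonsingular, so $\operatorname{rank}M=\min(n-|B|,|A|)$, forcing $n-|B|<|A|$, i.e.\ $|B|\ge n+1-s$ — which is Theorem~\reft{birotao}, the base of the induction.
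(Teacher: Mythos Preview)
Two issues, one repairable, one not yet.

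First, the claim that consecutive divisors $d_1<d_2$ of $n=|G|$ must have $d_2/d_1$ prime is false: take $n=12$, $d_1=4$, $d_2=6$ (with $s=5$), or $n=30$, $d_1=3$, $d_2=5$ (with $s=4$). So $d_1$ need not divide $|G/K|=d_2$, and you cannot apply the induction hypothesis on $G/K$ with the pair $(d_1,d_2)$ as stated. This step is, however, salvageable: the largest proper divisor $d'$ of $d_2$ also divides $n$ and is $<d_2$, hence $d'\le d_1$ by consecutiveness in $n$; since $d'\le d_1<s<d_2$, the induction hypothesis on $G/K$ with \emph{its} consecutive divisors $d',d_2$ gives $|B\cap\psi H|\ge(d'+d_2-s)/d'$, and the inequality $(d_1-d')(d_2-s)\ge 0$ shows this is at least $(d_1+d_2-s)/d_1$, which is exactly the per-coset bound you need. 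So the ``easy'' case ($B$ meets every coset of $H$) survives after this correction.

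The structural gap is the complementary case, which you correctly flag as the obstacle and then leave unproved. You observe that one missed coset forces $c\le s/2$, and that there is a trade-off between how many cosets of $H$ the set $B$ occupies and how small $c$ must be; but ``recursing on this partial structure'' and ``choosing $H$ adaptively as a subgroup of maximal order all of whose cosets $B$ meets'' are programmes, not arguments. Nothing in your outline shows that the per-coset gain always outweighs the loss of cosets, and this balance is precisely the content of the theorem. The paper does not prove Theorem~\reft{meshulam} in full --- it is cited from Meshulam, with the sole remark that his proof is inductive with Theorem~\reft{birotao} as the base --- but it does derive the special case $G=\F_p^2$ in Section~\refs{observations}, by a mechanism that avoids your dichotomy entirely: fix any nonzero proper $H<\F_p^2$, apply Theorem~\reft{birotao} to the restriction of $f$ to a \emph{single} $H$-coset realizing the minimum $n_S=\min_{s\in S}|(s+H)\cap S|$ and dually for $\hf$ (this is Lemma~\refl{SXmn}), obtaining $|X|\ge n_X(p+1-n_S)$ and $|S|\ge n_S(p+1-n_X)$, and then check algebraically that $|X|+\tfrac1p|S|\ge p+1$. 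That argument works with minima over cosets rather than with the number $c$ of cosets meeting $S$, so the question of whether $B$ meets every coset of $H$ never arises.
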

Notice that in the case where $G=\F_p$ with $p$ prime,
Theorem~\reft{meshulam} reduces to Theorem~\reft{birotao}. Indeed, Meshulam's
proof of Theorem~\reft{meshulam} uses induction, with Theorem~\reft{birotao}
serving the base case.

As it has been observed by Tao, Theorem~\reft{meshulam} shows that in the
Euclidean plane, the points $(|\supp f|,|\supp\hf|)$ lie on or above the
convex polygonal line through the points $(|H|,|G/H|)$, where $H$ ranges over
all subgroups of $G$. At the same time, Theorem~\reft{basic} merely states
that the points $(|\supp f|,|\supp\hf|)$ lie on or above the hyperbola
determined by the points $(|H|,|G/H|)$.

Suppose that $H$ is a subgroup, and $g$ is an element of a finite abelian
group $G$. Let
 $$ H^\perp:=\{\chi\in\hG\colon H\le\ker\chi\}. $$
It is a basic fact that a function $f\in L(G)$ is a scaled restriction of a
character $\psi\in\hG$ onto the coset $g+H$ if and only if the Fourier
transform $\hf$ is a scaled restriction of the evaluation homomorphism
$\chi\mapsto\ochi(g)$ onto the coset $\psi H^\perp$. We have then $|\supp
f|=|H|$ and $|\supp\hf|=|H^\perp|=|G|/|H|$, so that Theorem~\reft{basic} is
sharp in this case. Tao conjectured, however, that the estimate of
Theorem~\reft{basic} can be substantially sharpened, provided that
 $|\supp f|$ and $|\supp\hf|$ stay away from any divisor of $|G|$.
Theorem~\reft{meshulam} confirms this conjecture.

\section{Summary of results}\label{s:sumres}

It is well-known that the construction at the end of the previous section is
the only one for which equality holds in Theorem~\reft{basic}. This makes it
plausible to expect that, in fact, it might be possible to improve the
estimate of Theorem~\reft{basic} assuming only that $\supp f$ is not ``too
close'' to a coset of a subgroup of $G$, and $\supp\hf$ is not ``too close''
to a coset of a subgroup of $\hG$ (in contrast with the much stronger
assumption that $|\supp f|$ and $|\supp\hf|$ stay away from any divisor of
$|G|$). In this paper we establish several results of this sort in the
special case where the underlying group is elementary abelian of rank $2$;
that is, $G=\F_p^2$ with $p$ prime.

For comparison purposes, we notice that for the rank-$2$ elementary abelian
groups, Theorem~\reft{meshulam} can be rendered in a rather different way.
Namely, suppose that $f\in L(\F_p^2)$ is a nonzero function.
If $\min\{|\supp f|,|\supp\hf|\}\ge p$, then
\begin{equation}\label{e:meshalt}
  \min\{|\supp f|,|\supp\hf|\}
                           + \frac1p\,\max\{|\supp f|,|\supp\hf|\} \ge p+1;
\end{equation}
otherwise $\max\{|\supp f|,|\supp\hf|\}\ge p$ by Theorem~\reft{basic}, and
then Theorem~\reft{meshulam} leads to
  $$ |\supp\hf| \ge \begin{cases}
       p(p+1-|\supp f|)\ &\text{if}\ |\supp f|\le p\le |\supp\hf|, \\
       p^{-1}(p^2+p-|\supp f|)\ &\text{if}\ |\supp\hf|\le p\le |\supp f|,
             \end{cases} $$
which shows that~\refe{meshalt} holds true in this case, too. It is equally
easy to see that, conversely, for the groups under consideration,
\refe{meshalt} implies the estimate of Theorem~\reft{meshulam}. Thus,
\refe{meshalt} is an equivalent restatement of Theorem~\reft{meshulam} for
the groups $G=\F_p^2$.

We conjecture that, perhaps, much more can be true.
\begin{conjecture}\label{j:conjecture}
If $p$ is a prime, then for any nonzero function $f\in L(\F_p^2)$, and any
integer $k\in[1,p]$, writing for brevity $S:=\supp f$ and $X:=\supp\hf$, we
have
  $$ \frac1k\,\min\{|S|,|X|\}
                   + \frac1{p+1-k}\,\max\{|S|,|X|\} \ge p+1, $$
unless at least one of the sets $S\seq\F_p^2$ and $X\seq\wh{\F_p^2}$ is a
dense subset of a union of a small number of proper cosets of the
corresponding group. (Perhaps, it suffices to assume that neither $S$, nor
$X$ can be covered by fewer than $\min\{k,p+1-k\}$ cosets.)
\end{conjecture}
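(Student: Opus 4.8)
\medskip
\noindent Here is a natural, if incomplete, plan of attack on Conjecture~\refj{conjecture}; the place where it stalls is presumably the reason the statement is offered as a conjecture rather than a theorem. Since the conjectured family of inequalities interpolates between Meshulam's bound (the case $k=1$, equivalently~\refe{meshalt}) and a far stronger estimate near $k\approx p/2$, the aim is to squeeze enough \emph{directional} information out of the $k=1$ argument to raise $k$. Using the symmetry $f\leftrightarrow\hf$, $k\leftrightarrow p+1-k$, one may assume $|S|\le|X|$ and $2k\le p+1$, so the claimed exception reads ``$S$ or $X$ can be covered by fewer than $k$ proper cosets'', and a proper coset of $\F_p^2$ is a single point or an affine line. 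The plan is a \emph{tomographic slicing}: fix one of the $p+1$ pencils of parallel lines, apply a partial Fourier transform in that direction so that each line-restriction of $f$ becomes a function on $\F_p$, invoke Theorem~\reft{birotao} on every nonempty slice, and then combine the resulting inequalities over all $p+1$ pencils.

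The one-direction step I would carry out first. Writing $\F_p^2=\F_p\times\F_p$ and letting $\tilde f$ be the partial transform in the first coordinate, Theorem~\reft{birotao} applied to each nonempty row of $f$ bounds the column counts of $\supp\tilde f$ from below, and a second partial transform in the remaining coordinate bounds $|X|$ in terms of those counts; this merely reproduces~\refe{meshalt}. The extra leverage should come from the rigidity of Theorem~\reft{birotao}: equality forces, slice by slice, its extremal configuration, and such rigidity cannot persist in more than boundedly many of the $p+1$ directions unless $S$ or $X$ genuinely collapses onto a few lines. Quantifying how much slack accumulates -- one expects that if $S$ requires at least $k$ lines to cover it, then at least $k$ well-chosen pencils each contribute slack -- is what ought to produce the parameter $k$ and the exact shape $(p+1-k)|S|+k|X|\ge k(p+1-k)(p+1)$ of the bound; the exceptional analysis (when $S$, say, is dense in a union of $m<k$ lines) should then be handled by inducting on $m$, rotating the relevant lines into a single pencil and passing to a quotient, with base case $m=1$ being precisely the equality case of Meshulam's bound, which is why larger $k$ must exclude it.

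The combination step is where I expect the real difficulty. The $p+1$ directional constraints are all constraints on the single function $f$ and are far from independent, and there is no evident product or additive mechanism turning ``$S$ is spread over $k$ pencils' worth of lines'' into the precise linear bound above. One technical route is to phase-encode the partial transforms so that all $p+1$ slicings are governed by a single algebraic identity -- the character table of $\F_p^2$ is built entirely from $p$-th roots of unity, so a Chebotarev/Vandermonde-type nonsingularity statement is in principle available for every minor not supported on a coset configuration -- and then to cast the problem as a linear program whose feasible region is cut out by the directional constraints and whose optimum is the conjectured value, the excluded coset-covered configurations appearing as the extreme points. Making such a program simultaneously correct and tight -- in particular, ruling out sporadic near-singular minors of the $\F_p^2$ Fourier matrix that do not arise from its subgroup lattice -- is the crux, and I believe it is the obstruction that keeps the statement conjectural; the slice-wise inputs themselves cannot be improved, since Theorem~\reft{birotao} is already sharp, so all the gain must come from this interaction between directions.
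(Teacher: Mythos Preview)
The statement is presented in the paper as a conjecture, not a theorem; the paper does not prove it, and you correctly recognize this. What the paper does prove are special cases ($k=2$ for rational-valued $f$, $k=p-1$, a partial $k=p-2$) and two asymptotic substitutes (Theorems~\reft{as2} and~\reft{as3}).

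Your ``one-direction step'' --- restrict $f$ to each coset of a fixed line $H$, apply Theorem~\reft{birotao} to each nonempty slice, and read off constraints on the $H^\perp$-coset profile of $X$ --- is exactly the content of Lemma~\refl{SXmn}, which the paper calls crucial and from which it also rederives~\refe{meshalt}. So your first step and the paper's basic tool coincide.

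Where you and the paper part company is the combination step. You propose aggregating the $p+1$ directional constraints into a linear program, backed by a Chebotarev-type analysis of minors of the $\F_p^2$ Fourier matrix. The paper does nothing of the sort. For the small-$k$ cases it proves, it feeds the output of Lemma~\refl{SXmn} into the Jamison--Brouwer--Schrijver blocking-set bound (and a stability version, Lemma~\refl{pencils}): once slicing forces $X$ to meet almost every line in $\wh{\F_p^2}$, affine-plane incidence geometry gives $|X|\gtrsim 2p$ outright. For Theorems~\reft{as2} and~\reft{as3} it takes yet another route, constructing auxiliary functions such as $\Del_g=f\ast\big(f\cdot(1_{g+H}-1_{g_0+H})\big)$ whose Fourier support is $X$ with its sparse $H^\perp$-cosets stripped away, and then reapplying Lemma~\refl{SXmn} to $\Del_g$. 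In each case the gain comes from pushing a \emph{single} well-chosen direction further (via blocking sets or convolution identities), not from fusing all $p+1$ directions simultaneously. Your plan is candid about where it stalls, and the stalling point is genuine; but the paper's partial successes suggest that incidence geometry and these convolution tricks, rather than an LP/minor framework, are the handles that have actually yielded progress.
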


Equivalently, if for some $k\in[1,p]$ and $\eps\in(0,1)$ we have
$\min\{|S|,|X|\}\le(1-\eps)k(p+1)$, then
$\max\{|S|,|X|\}\ge\eps(p+1)(p+1-k)$, unless the subgroup structure of
$\F_p^2$ is involved, as indicated.

We will occasionally use the notation $S=\supp f$ and $X=\supp\hf$ without
redefining it anew each time.

The left-hand side of the inequality of Conjecture~\refj{conjecture} is
minimized, over all \emph{real} $k>0$, for
  $$ k = \frac{p+1}{\max\big\{\sqrt{|X|/|S|},\sqrt{|S|/|X|}\big\}+1}
                                                       \le \frac{p+1}2, $$
the corresponding minimum value being $(\sqrt{|X|}+\sqrt{|S|})^2/(p+1)$. As a
result, recalling Theorem~\reft{birotao}, it is very tempting to further
conjecture, as an ``almost-corollary'' of Conjecture~\refj{conjecture}, that
for any nonzero function $f\in L(\F_p^2)$ one has
\begin{equation}\label{e:roots}
  \sqrt{|X|}+\sqrt{|S|} \ge p+1,
\end{equation}
provided that neither $S$ nor $X$ is contained in a union of fewer than $p/2$
cosets.

The bounds of Conjecture~\refj{conjecture} corresponding to various values of
$k$, along with the enveloping bound~\refe{roots}, are shown in
Figure~\reff{Conjecture}. The yellow dots are points of the form
$(m(p+1-n),n(p+1-m))$ where $1\le m,n\le p$ are integers; their relevance
will become clear later.


\definecolor{noblue}{rgb}{0.6,0.6,0}
\begin{figure}[h!]
\caption{Conjecture~\refj{conjecture}.}
\center{\includegraphics[width=0.5\textwidth,scale=0.25]{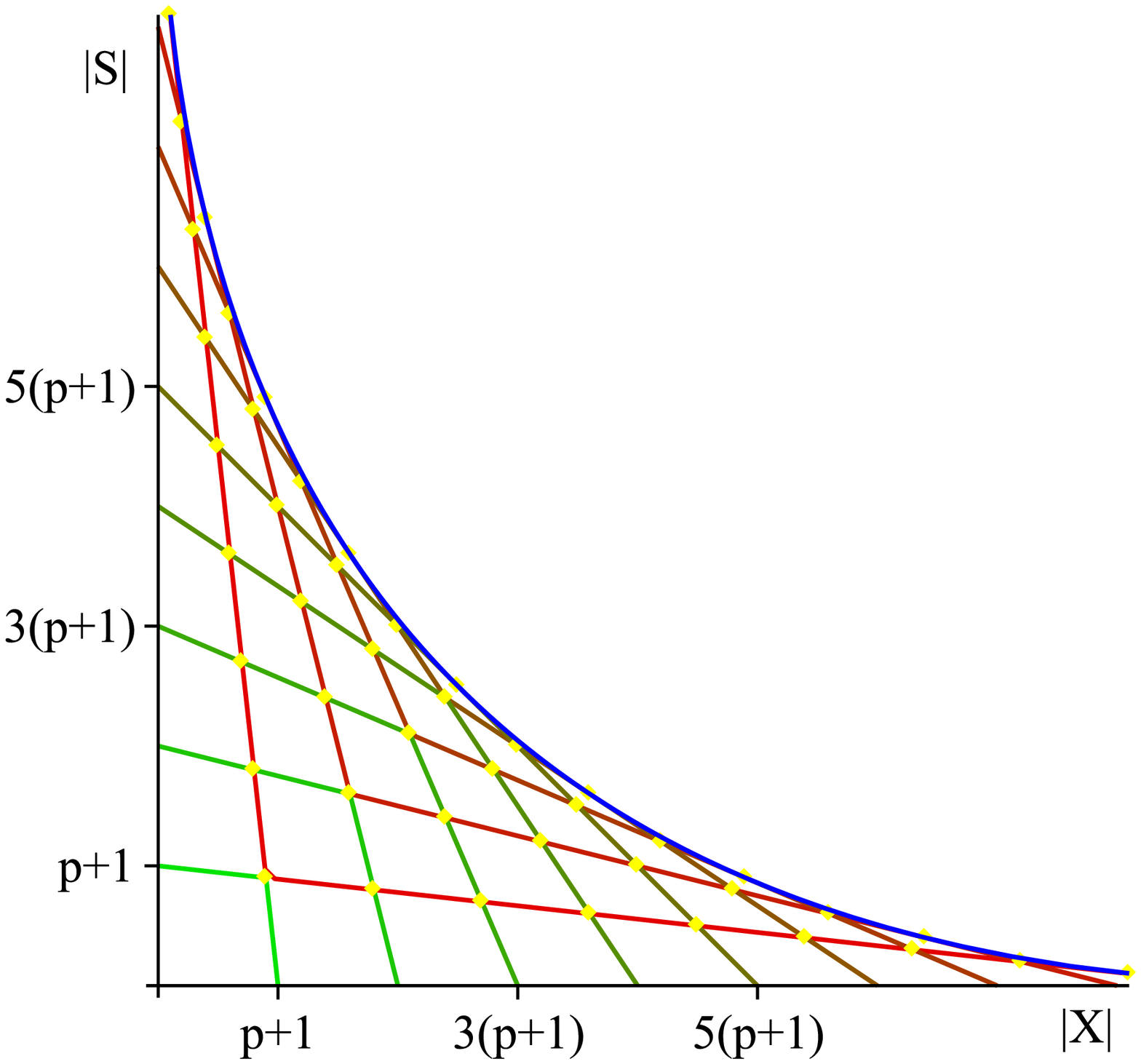}}

\begin{minipage}[c]{\textwidth}
\setlength{\unitlength}{1mm}
\begin{picture}(40,25)(-87,-94)
\begin{footnotesize}
\linethickness{0.4mm}
\put(0,0){\raisebox{0.3em}{\cline{red}{1}{0}{10}} $\ k=1$}
\put(0,-5){\raisebox{0.3em}{\cline{noblue}{1}{0}{10}} $\ k=(p+1)/2$}
\put(0,-10){\raisebox{0.3em}{\cline{green}{1}{0}{10}} $\ k=p$}
\put(0,-15){\raisebox{0.3em}{\cline{blue}{1}{0}{10}}
                                        $\ \sqrt{|S|}+\sqrt{|X|}\ge p+1$}
\end{footnotesize}
\end{picture}
\end{minipage}

\label{f:Conjecture} 
\end{figure}

\vskip -1.0in

The case $k=1$ of Conjecture~\refj{conjecture} is Theorem~\reft{meshulam} in
the form~\refe{meshalt}, the case $k=p$ follows from it since for any real
numbers $m\le M$, one has
  $$ \frac1p\,m+M\ge m+\frac1p\,M. $$
In general, for a positive integer $\kap<p/2$, the case $k=\kap$ of
Conjecture~\refj{conjecture} implies the case $k=p+1-\kap$, and for
$p/2<\kap<p$, the case $k=\kap$ implies the case $k=\kap+1$.

Our first principal result establishes the case $k=2$ of the conjecture for
\emph{rational-valued} functions.

\begin{theorem}\label{t:rational}
If $p\ge 3$ is a prime, and $f\in L(\F_p^2)$ is a nonzero rational-valued
function, then writing $S:=\supp f$ and $X:=\supp\hf$, we have
  $$ \frac12\,\min\{|S|,|X|\} + \frac1{p-1}\,\max\{|S|,|X|\} \ge p+1, $$
except if there exists a nonzero, proper subgroup $H<\F_p^2$ such that $f$ is
constant on each $H$-coset (in which case $X=H^\perp$ if the sum of all
values of $f$ is nonzero, and $X=H^\perp\stm\{1\}$ if the sum is equal to
$0$).
\end{theorem}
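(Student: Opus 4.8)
The plan is to exploit the rationality hypothesis through a Galois/field-automorphism argument, combined with the baseline estimate~\refe{meshalt} (the case $k=1$). The point is that if $f$ is rational-valued, then for any prime $\ell\ne p$ and any power of the Frobenius-type automorphism $\sig$ of the cyclotomic field $\C\supseteq\Q(\zeta_p)$ fixing $\Q$, applying $\sig$ to the Fourier coefficients permutes the characters but preserves $|\supp f|$ and $|\supp\hf|$; more usefully, $\hf$ takes values in $\Q(\zeta_p)$, and the Galois group $(\Z/p\Z)^\times$ acts on $\wh{\F_p^2}$ by scaling the argument. Thus $X=\supp\hf$ is invariant under the dilation action of $\F_p^\times$ on $\wh{\F_p^2}\cong\F_p^2$. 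Likewise, if one also tracks the primes dividing the (rational) values of $f$, one gets divisibility constraints on $\hf$ via the prime-ideal factorization in $\Q(\zeta_p)$; a standard consequence is that if $|\supp f|<p$ then $\hf$ cannot vanish on any line through the origin of $\wh{\F_p^2}$ unless $f$ is supported on a coset of a subgroup. So first I would record these two structural facts: (i) $X$ is a union of $\F_p^\times$-orbits on $\wh{\F_p^2}$, i.e.\ a cone (union of punctured lines through $0$, possibly together with $0$); and symmetrically $S$ is a cone in $\F_p^2$ if $\hf$ is rational-valued, but in general only $X$ has this property — so I will have to be careful about which of $S,X$ is the ``cone'' and split into cases.

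Next I would reduce to the situation where, say, $|X|\le|S|$ (the other case being symmetric after replacing $f$ by its inverse transform, since the inverse transform of a rational function need not be rational — here I would instead use that $\wh{\wh f}$ is $f$ up to reflection, and reflection preserves rationality, so the roles of $S$ and $X$ are genuinely interchangeable for the \emph{statement}, though the cone structure always lands on $X$). With $X$ a cone, write $X=\bigcup_{j\in J}(\ell_j\stm\{0\})$ possibly $\cup\{0\}$, where the $\ell_j$ are distinct lines through the origin in $\wh{\F_p^2}$ and $|J|=t$. Then $|X|=t(p-1)$ or $t(p-1)+1$. The restriction of $\hf$ to each line $\ell_j$ is, after identifying $\ell_j\stm\{0\}$ with $\F_p^\times$, essentially a function on $\F_p$ that is a Fourier transform on $\F_p$ of the corresponding ``slice'' of $f$; more precisely, projecting $f$ along the direction annihilated by $\ell_j$ gives a function $f_j\in L(\F_p)$ whose $\F_p$-Fourier transform is $\hf|_{\ell_j}$ up to the value at $0$. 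Applying Theorem~\reft{birotao} to each $f_j$ yields $|\supp f_j|+|\supp\widehat{f_j}|\ge p+1$, which translates into a lower bound relating $|\supp f|$ (which dominates each $|\supp f_j|$ suitably) and $t$. Summing or optimizing these over $j\in J$, and using that the fibers of the $t$ projections cover $\F_p^2$ with controlled overlaps (each point of $\F_p^2\stm\{0\}$ lies in exactly the fiber-directions not... — actually each nonzero point lies in $p+1$ lines total, so in at most $t$ of the relevant fibers), should produce the target inequality $\tfrac12|X|+\tfrac1{p-1}|S|\ge p+1$ in the form $\tfrac{t}{2}+\tfrac1{p-1}|S|\ge\dots$; the constant $\tfrac12$ on $\min$ and $\tfrac1{p-1}$ on $\max$ are exactly what the line-count $t$ and the field size $p-1=|\F_p^\times|$ deliver.

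The main obstacle, I expect, is the \emph{exceptional case analysis} and getting the inequality tight rather than off by a bounded amount. Naively summing Theorem~\reft{birotao} over the $t$ directions loses a factor of $t$ on the $|S|$ side; to avoid this one must use that the supports $\supp f_j$ of the projections are not independent — they are all projections of the \emph{same} set $S$ — so a single direction already ``sees'' most of $S$ unless $S$ is concentrated on few fibers, which is precisely the coset-structure exception. Quantifying this requires a counting lemma: if $S\seq\F_p^2$ and $\pi_j:\F_p^2\to\F_p$ are the $t$ projections, then $\sum_j|\pi_j(S)|$ is large unless $S$ is covered by few fibers of some $\pi_j$; pushing this through, together with the dichotomy on whether $0\in X$ (equivalently whether $\sum f\ne0$), should isolate exactly the stated exception ``$f$ constant on $H$-cosets, $X=H^\perp$ or $H^\perp\stm\{1\}$.'' I would handle the boundary between ``$S$ meets many fibers of every $\pi_j$'' and ``$S$ lies in few fibers of some $\pi_j$'' by a clean threshold argument, and verify directly that in the latter case $f$ must be (a scaled restriction of a character times) a function supported on a coset, forcing the exceptional conclusion; the cases $p=3$ and small $t$ (e.g.\ $t=1$, where $X$ is a single punctured line and the exception is immediate) I would check by hand.
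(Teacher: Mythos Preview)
Your Galois observation is right and matches the paper: rationality of $f$ forces $X\cup\{1\}$ to be a union of lines through the origin of $\wh{\F_p^2}$, say $t$ of them.  After that, though, the proposal goes off course.

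First, the easy case.  If $|X|\le|S|$ and $t\ge 2$, then $|S|\ge|X|\ge 2(p-1)$ already gives
\[
  \tfrac12|X|+\tfrac1{p-1}|S|\ \ge\ (p-1)+2\ =\ p+1,
\]
so your projection-and-Birotao machinery is not needed here; the paper disposes of this case in one line.

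The hard case is $|S|<|X|$, and this is \emph{not} symmetric to the previous one: the cone structure lives on $X$, not on $S$, so swapping roles does not help.  Here you need $|S|\ge 2(p+1-t)$, and your projection idea is too weak to deliver it.  Projecting $f$ along a direction $H$ with $H^\perp$ \emph{not} among the $t$ lines of $X$ shows only that the pushforward is constant, hence that $S$ meets every $H$-coset.  But a single line in one of the $t$ ``free'' directions already meets every coset in all the remaining $p$ directions, so this information alone permits $|S|=p$, far short of $2(p+1-t)$ when $t$ is small.  You must also control the $t$ special directions, and your ``counting lemma'' sketch does not do this.

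What the paper actually does in this case: using Lemma~\refl{SXmn} on \emph{slices} (not projections), it shows that for each of the $t$ special directions $H\in\cH$ one has $n_X=t-1$, hence $K_S\ge p+2-t$, so $S$ misses at most $t-2$ lines in each such direction, while blocking every line in the remaining $p+1-t$ directions.  The lower bound on $|S|$ then comes from a stability version of the Jamison--Brouwer--Schrijver blocking-set theorem (Lemma~\refl{pencils}), which yields $|S|\ge 2p-t-(t-2)=2(p+1-t)$.  This affine blocking-set ingredient is the crux of the argument and is absent from your plan; the aside about prime-ideal factorization and divisibility of Fourier coefficients is a red herring and plays no role.
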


\begin{remark}
Denoting by $1_{H_1}$ and $1_{H_2}$ the indicator functions of distinct,
nonzero, proper subgroups $H_1,H_2<\F_p^2$, and letting $f:=1_{H_1}-1_{H_2}$,
we have $|S|=|X|=2(p-1)$, so that the estimate of Theorem~\reft{rational}
holds as an equality in this case.
\end{remark}

The reader is invited to review Figure~\reff{BoundComparison} where the
bounds of Theorems~\reft{basic} and~\reft{meshulam} are shown in gray (the
lower hyperbola) and black, respectively, and the bound of
Theorem~\reft{rational} is represented by the red dashed line.


\begin{figure}[h!]
\caption{Bound comparison.}
\center{\includegraphics[width=0.5\textwidth,scale=0.25]{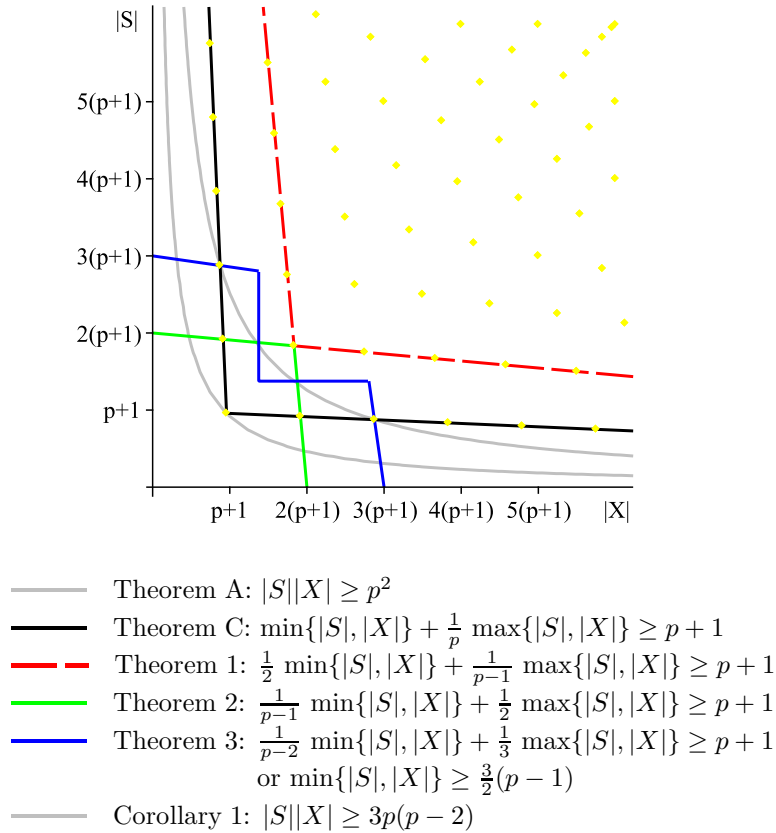}}

\begin{minipage}[c]{\textwidth}
\setlength{\unitlength}{1mm}
\begin{picture}(120,45)(-35,-40)
\begin{footnotesize}
\linethickness{0.4mm}
\put(0,0){\raisebox{0.3em}{\cline{lightgray}{1}{0}{10}}
  $\ $ Theorem~\reft{basic}: $|S||X|\ge p^2$}
\put(0,-5){\raisebox{0.3em}{\cline{black}{1}{0}{10}}
  $\ $ Theorem~\reft{meshulam}:
                          $\min\{|S|,|X|\}+\frac1p\,\max\{|S|,|X|\}\ge p+1$}
\put(0,-10){\raisebox{0.3em}{\cline{red}{1}{0}{6}}
\raisebox{0.3em}{\cline{red}{1}{0}{3}}
  $\ $ Theorem~\reft{rational}:
             $\frac12\,\min\{|S|,|X|\}+\frac1{p-1}\,\max\{|S|,|X|\}\ge p+1$}
\put(0,-15){\raisebox{0.3em}{\cline{green}{1}{0}{10}}
  $\ $ Theorem~\reft{k=p-1}:
             $\frac1{p-1}\,\min\{|S|,|X|\}+\frac12\,\max\{|S|,|X|\}\ge p+1$}
\put(0,-20){\raisebox{0.3em}{\cline{blue}{1}{0}{10}}
  $\ $ Theorem~\reft{k=p-2}:
          $\frac1{p-2}\,\min\{|S|,|X|\} + \frac13\,\max\{|S|,|X|\} \ge p+1$}
\put(0,-25){\phantom{\line(1,0){10} $\ $ Theorem 3: }
  or $\min\{|S|,|X|\}\ge\frac32(p-1)$}
  \put(0,-30){\raisebox{0.3em}{\cline{lightgray}{1}{0}{10}}
  $\ $ Corollary~\refc{uppergray}: $|S||X|\ge 3p(p-2)$}
\end{footnotesize}
\end{picture}
\end{minipage}
\vskip -0.5in

\label{f:BoundComparison} 
\end{figure}


Next, we settle the case $k=p-1$ of Conjecture~\refj{conjecture}. To state
the corresponding result, we notice that the definition of an orthogonal
subgroup at the end of Section~\refs{intro} establishes a bijection between
the subgroups of the group $\F_p^2$ and those of the dual group
$\wh{\F_p^2}$, the inverse bijection being given by
  $$ F \mapsto F^\perp
                 := \cap_{\chi\in F} \ker\chi,\quad F\le\wh{\F_p^2}. $$
We say that the subgroups $H\le\F_p^2$ and $H^\perp\le\wh{\F_p^2}$
(equivalently, $F\le\wh{\F_p^2}$ and $F^\perp\le\F_p^2$) are orthogonal to
each other.

\begin{theorem}\label{t:k=p-1}
If $p\ge 3$ is a prime, and $f\in L(\F_p^2)$ is a nonzero function, then
letting $S:=\supp f$ and $X:=\supp\hf$ we have
  $$ \frac1{p-1}\min\{|S|,|X|\} + \frac12\,\max\{|S|,|X|\} \ge p+1, $$
except if $S$ and $X$ are cosets of a pair of nonzero, proper, mutually
orthogonal subgroups of $\F_p^2$ and $\wh{\F_p^2}$, respectively.
\end{theorem}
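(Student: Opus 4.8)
The plan is to split the argument into three regimes. Write $G=\F_p^2$, $S=\supp f$, $X=\supp\hf$; since Fourier inversion makes both the hypothesis and the conclusion symmetric in $S$ and $X$, we may assume $|S|\le|X|$. If $|S|\ge 2p-1$ then $|X|\ge|S|\ge 2p-1$ and the asserted inequality is immediate, so from now on $|S|\le 2p-2$.

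The second, ``degenerate'' case is when $S$ is contained in a coset $v+H$ of a subgroup $H<G$ of order $p$ (which forces $|S|\le p$). Viewing $h\mapsto f(v+h)$ as a function $\tilde f$ on $H\cong\F_p$, one computes $\hf(\chi)=\ochi(v)|G|^{-1}\wh{\tilde f}(\chi|_H)$, so $\hf$ is supported on a union of cosets of $H^\perp$ and $|X|=p\,|\supp\wh{\tilde f}|$. Theorem~\reft{birotao} applied to $\tilde f$ gives $|\supp\wh{\tilde f}|\ge p+1-|S|$, hence $|X|\ge p(p+1-|S|)$; a one-line monotonicity check turns this into $\frac1{p-1}|S|+\frac12|X|\ge p+1$ whenever $|S|\le p-1$, and also when $|S|=p$ and $|\supp\wh{\tilde f}|\ge2$. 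The only leftover is $|S|=p$ with $|\supp\wh{\tilde f}|=1$: then $\wh{\tilde f}$ is a single point mass, so $\tilde f$ is a constant times a character of $H$, i.e. $f$ is a scaled restriction of a character of $G$ to the coset $v+H$, and a short computation shows that $X$ is then precisely a coset of $H^\perp$ --- the exceptional configuration (for which one checks that the inequality indeed fails when $p\ge3$). The same dichotomy holds, by symmetry, if $X$ lies in a coset of a subgroup of $\wh G$ of order $p$. So we may assume that $S$ and $X$ each affinely span $G$.

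For the main case the crucial geometric input is the Jamison/Brouwer--Schrijver bound: an affine blocking set of $\F_p^2$ has at least $2p-1$ points. As $|S|\le 2p-2$, the set $S$ fails to meet some line, so there is a subgroup $H<G$ of order $p$ with $t:=|\pi_H(S)|\le p-1$, where $\pi_H\colon G\to G/H$; moreover $t\ge2$ because $S$ spans. I would then fiber over $H$ in two ways. For a character $\psi$ of $G$ let $g_\psi\in L(G/H)$ be the fiber sum of the modulation $f\opsi$, so $g_\psi(v+H)=\sum_{h\in H}(f\opsi)(v+h)$. The standard interplay of modulation and quotient with the Fourier transform yields $\wh{g_\psi}(\eta)=p\,\hf(\psi\eta)$ for $\eta\in H^\perp\cong\wh{G/H}$, whence $|\supp\wh{g_\psi}|=|X\cap\psi H^\perp|$, while $\supp g_\psi\seq\pi_H(S)$ has at most $t$ elements; so Theorem~\reft{birotao} on $G/H\cong\F_p$ gives $|X\cap\psi H^\perp|\ge p+1-t$ whenever $g_\psi\not\equiv0$. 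Moreover $g_\psi(v+H)=\opsi(v)\wh{f|_{v+H}}(\psi|_H)$, so $g_\psi\equiv0$ exactly when $\psi|_H$ lies outside the Fourier support of $f|_{v+H}$ for every coset $v+H$; let $z$ be the number of such characters of $H$ (equivalently, of cosets $\psi H^\perp$ killing $g_\psi$). Then on every coset $v+H$ meeting $S$ the transform of $f|_{v+H}$ is supported on at most $p-z$ characters, so Theorem~\reft{birotao} on $H\cong\F_p$ gives $|S\cap(v+H)|\ge z+1$; summing over the $t$ active cosets, $|S|\ge t(z+1)$, i.e.\ $z\le|S|/t-1$. Combining the two bounds, $|X|\ge(p-z)(p+1-t)\ge(p+1-|S|/t)(p+1-t)$, and feeding this into the target inequality reduces it, after clearing denominators, to $(p-1-t)\bigl((p-1)t-|S|\bigr)\ge0$, which holds since $t\le p-1$ and $|S|\le 2p-2\le 2(p-1)\le(p-1)t$. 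This settles the main case; as the inequality can fail only in the degenerate case, where the unique failure mode is the stated exceptional configuration, the theorem follows.

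The main obstacles I anticipate are: (i) carrying out the fibering cleanly --- in particular verifying the identity $\wh{g_\psi}(\eta)=p\,\hf(\psi\eta)$, the Poisson-type relation between the restrictions of $f$ to $H$-cosets and those of $\hf$ to $H^\perp$-cosets, and bookkeeping the count $z$ so that the two applications of Theorem~\reft{birotao} (one ``along'' $H$, one ``along'' $G/H$) mesh correctly; and (ii) in the degenerate case, making the endgame airtight, i.e.\ showing that when $f$ is a scaled restriction of a character to $v+H$ then $X$ is the \emph{full} coset $\chi_0H^\perp$ and not a proper subset, so that this is genuinely the exception named in the statement. A secondary point is whether to cite the Jamison/Brouwer--Schrijver bound or to give a self-contained argument for the single fact used: that a subset of $\F_p^2$ of size at most $2p-2$ cannot meet every line.
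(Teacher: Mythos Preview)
Your argument is correct and uses the same two ingredients as the paper --- Theorem~\reft{birotao} applied along the fibres of a subgroup $H$, and the Jamison--Brouwer--Schrijver blocking-set bound --- but it organizes them differently. The paper dualizes to $|X|\le|S|$ and, for \emph{every} direction $H$, substitutes the Lemma~\refl{SXmn} bounds $|S|\ge n_S(p+1-n_X)$, $|X|\ge n_X(p+1-n_S)$ into the target and factors:
\[
\tfrac12\,n_S(p+1-n_X)+\tfrac1{p-1}\,n_X(p+1-n_S)=p+1+\tfrac{p+1}{2(p-1)}(p-1-n_X)(n_S-2),
\]
so a violation forces $n_X=p$ (leading to the exceptional coset case) or $n_S=1$ for all $H$; the latter makes $X$ a blocking set and $|X|\ge 2p-1$ finishes. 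You instead assume $|S|\le|X|$, use the blocking-set bound on $S$ (since $|S|\le 2p-2$) to pick a \emph{single} direction $H$ with $t=K_S\le p-1$, and then run the fibre argument with the auxiliary count $z=p-K_X$, arriving at the same type of factorization $(p-1-t)\bigl((p-1)t-|S|\bigr)\ge 0$. So the paper applies the blocking-set bound to the larger-side dual set after a global dichotomy, while you apply it to the smaller set to locate one good direction and then do a slightly heavier two-way fibre count; the paper's route is shorter because the factoring over $(n_S,n_X)$ avoids introducing $z$, but your route has the virtue of being constructive rather than by contradiction.

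Two cosmetic points to tidy in the write-up: your displayed identities $\hf(\chi)=\ochi(v)\,|G|^{-1}\widehat{\tilde f}(\chi|_H)$ and $g_\psi(v+H)=\opsi(v)\,\widehat{f|_{v+H}}(\psi|_H)$ are off by factors of $|H|$ under the paper's normalization; this does not affect any support statement, but fix the constants. Also, your sentence ``So we may assume that $S$ and $X$ each affinely span $G$'' overshoots --- in your main case you only use that $S$ is not contained in a line (to get $t\ge 2$), and you never use that $X$ spans; you may drop that clause.
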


The bound furnished by Theorem~\reft{k=p-1} is shown in green in
Figure~\reff{BoundComparison}.

\begin{remark} The inequality of the theorem readily implies
$\max\{|S|,|X|\}\ge 2(p-1)$.
\end{remark}

\begin{remark}
Equality is attained, for instance, if $f$ takes the value $1$ on a coset of
a nonzero, proper subgroup of $\F_p^2$, the value $-1$ on another coset of
the same subgroup, and vanishes outside of these two cosets; in this case
$|S|=2p$ and $|X|=p-1$.
\end{remark}

\begin{remark}
The exceptional case of the theorem is described by Lemma~\refl{cosets2} in
the appendix: namely, in this case there exist a nonzero, proper subgroup
$H<\F_p^2$, a character $\chi_0\in\wh{\F_p^2}$, an element $g_0\in\F_p^2$,
and a nonzero coefficient $c\in\C$ such that
  $$ f(g) = \begin{cases}
              c\chi_0(g)\ &\text{if}\ g\in g_0+H, \\
              0         \ &\text{if}\ g\notin g_0+H.
            \end{cases} $$
\end{remark}

Although we were unable to fully prove Conjecture~\refj{conjecture} for
$k=p-2$, we could at least give a proof under the extra assumption
$\min\{|S|,|X|\}<\frac32(p-1)$. The resulting estimate, visualized in
Figure~\reff{BoundComparison} by the blue line, can also be viewed as a
contribution towards the case $k=2$.

\begin{theorem}\label{t:k=p-2}
If $p\ge 3$ is a prime, and $f\in L(\F_p^2)$ is a nonzero function, then
letting $S:=\supp f$ and $X:=\supp\hf$, we have either
  $$ \frac1{p-2}\,\min\{|S|,|X|\} + \frac13\,\max\{|S|,|X|\} \ge p+1, $$
or
  $$ \min\{|S|,|X|\}\ge \frac32(p-1), $$
except if the smallest of the sets $S$ and $X$ is a coset of a nonzero,
proper subgroup of the corresponding group, possibly with one element
missing, and the largest is either a coset, or a union of two cosets of the
orthogonal subgroup.
\end{theorem}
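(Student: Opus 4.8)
The plan is to assume, after interchanging $f$ and $\hf$ if necessary (both the hypothesis and the conclusion are symmetric in $S$ and $X$), that $|S|\le|X|$, and then to suppose for contradiction that $|S|<\tfrac32(p-1)$ while $\tfrac1{p-2}|S|+\tfrac13|X|<p+1$ --- equivalently $|X|<3(p+1)-\tfrac3{p-2}|S|$, so in particular $|X|<3(p+1)$ --- aiming to force $S$ and $X$ into the stated exceptional shape.

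The engine is a line--slicing lemma. For a one--dimensional subgroup $V\le\F_p^2$, write its $p$ cosets as $\ell_1\longc\ell_p$, put $s_i:=|S\cap\ell_i|$, and let $a_V:=\#\{i:s_i>0\}$ count the $V$--cosets meeting $S$. Restricting $f$ to the $\ell_i$ and Fourier--transforming along each line (this is where Theorem~\reft{birotao} on $\F_p$ enters), one gets, with $V^\perp$ the orthogonal subgroup, the estimate $|X|\ge T_V\,(p+1-a_V)$, where $T_V:=\big|\bigcup_i\supp\wh{f|_{\ell_i}}\big|\ge\max\{1,\,p+1-\min_i s_i\}$ and, crucially, $T_V=p$ whenever some $V$--coset meets $S$ in exactly one point. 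Dually, by Fourier inversion, $|S|\ge a_V\,(p+1-b_{V^\perp})$, where $b_{V^\perp}$ counts the cosets of $V^\perp$ meeting $X$; here one uses the identity that the quantity playing the role of $T$ for $\hf$ sliced in direction $V^\perp$ equals precisely $a_V$. Finally, if $S$ lies in a single coset of $V$, a direct computation shows $X$ is a union of $m$ cosets of $V^\perp$ with $m\ge p+1-|S|$ --- this manufactures the exceptional configurations --- and there is an analogous statement when $S$ lies in two cosets.

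The argument then splits on $a:=\min_V a_V$. If $a=1$, then $S$ is contained in a coset, so $X$ is a union of $m\ge p+1-|S|$ cosets of the orthogonal subgroup; substituting $|X|=pm$ into the failed inequality forces $m\le2$ and hence $|S|\in\{p-1,p\}$, which is exactly the asserted exceptional case. If $2\le a\le p-2$, choose $V$ attaining $a$: if some $V$--coset meets $S$ once then $T_V=p$ gives $|X|\ge p(p+1-a)$, which is already at least the required bound; if every hit coset has $\ge2$ points then $a\le\tfrac12|S|<\tfrac34(p-1)$, and $|X|\ge T_V(p+1-a)$ with $T_V\ge p+1-\lceil|S|/a\rceil$ is far larger than $3(p+1)$ --- in either subcase a contradiction (the few small primes being checked directly). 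The remaining case $a\ge p-1$ simply cannot occur when $|S|\le p$: a count of collinear pairs, using $\sum_V\#\{\text{collinear pairs of }S\text{ in direction }V\}=\binom{|S|}2$ together with $\binom{s}2\ge s-1$, rules out $a_V=p$ for all $V$ (this would say no two points of $S$ are collinear) and rules out $a_V=p-1$ for all $V$ when $|S|\le p$.

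The hard part is the surviving case $a\ge p-1$ with $p+1\le|S|<\tfrac32(p-1)$ (which arises only for $p\ge7$ and for which only the inequality, no exception, is wanted). Here the dual slicing bound $|S|\ge(p-1)(p+1-b_{V^\perp})$ combined with $|S|<\tfrac32(p-1)$ forces $b_{V^\perp}=p$ for \emph{every} direction, i.e.\ $X$ meets every line of the dual affine plane, so $X$ is an affine blocking set and hence $|X|\ge2p-1$ by the Jamison--Brouwer--Schrijver bound. Since $2p-1$ still falls short of the needed $\approx3p$, this is the real obstacle: one must invoke the structure of near--minimal affine blocking sets over $\F_p$ (such a set essentially consists of a line $\ell$ together with a transversal of the remaining parallel class), translate that structure of $X$ back through the Fourier transform, and play it off against the line--slicing estimates and Theorem~\reft{k=p-1} so as to either push $|X|$ past $3(p+1)-\tfrac3{p-2}|S|$ or contradict $|S|<\tfrac32(p-1)$. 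I expect this blocking--set analysis to be the technical heart of the proof, and I expect the hypothesis $\min\{|S|,|X|\}<\tfrac32(p-1)$ --- and its kinship with the Blokhuis bound $\tfrac32(p+1)$ below which a blocking set must contain a line --- to be exactly what keeps it under control.
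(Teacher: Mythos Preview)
Your proposal is explicitly incomplete: in the final paragraph you identify the case $a\ge p-1$ with $p+1\le|S|<\tfrac32(p-1)$ as ``the real obstacle'' and then only sketch a plan involving the structure of near-minimal blocking sets and the Blokhuis bound, without carrying it out. In fact this case is a one-liner, and you are aiming the blocking-set argument at the wrong set.

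If $a=\min_V a_V\ge p-1$, then \emph{by definition} $S$ already meets all but at most one line in every direction. Since any two non-parallel missed lines share a point, at most $\lfloor(p+1)/2\rfloor$ points suffice to cover all missed lines; adding them to $S$ produces a blocking set, so Jamison--Brouwer--Schrijver gives
\[
  |S| \;\ge\; (2p-1)-\frac{p+1}{2} \;=\; \frac32(p-1),
\]
contradicting your standing hypothesis $|S|<\tfrac32(p-1)$. No structure theory for $X$ is needed; your detour through the dual bound $|S|\ge(p-1)(p+1-b_{V^\perp})$ to force $b_{V^\perp}=p$ only tells you $X$ is a blocking set, hence $|X|\ge 2p-1$, which (as you note) is not large enough and leads nowhere. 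This is exactly the paper's argument, with the roles of $S$ and $X$ interchanged (the paper normalises so that $|X|\le|S|$).

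The paper also sets up the dichotomy more cleanly than your split on $a$. Substituting the two inequalities $|S|\ge n_S(p+1-n_X)$ and $|X|\ge n_X(p+1-n_S)$ of Lemma~\refl{SXmn} directly into the assumed failure $\tfrac1{p-2}|S|+\tfrac13|X|<p+1$ and factoring yields, for \emph{each} direction $H$, the inequality $(n_X-3)(p-2-n_S)<0$. Thus either $n_S\ge p-1$ for some $H$ (which, together with $|S|<\tfrac32(p-1)<2(p-1)$, forces $S$ into a single $H$-coset with at most one point missing --- the exceptional case), or $n_X\le 2$ for every $H$, whence $K_S\ge p-1$ for every $H$ and the blocking-set bound on $S$ above finishes. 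This bypasses your intermediate case $2\le a\le p-2$ entirely; while that case does go through with the observation $|S|\ge a$ (so that $\tfrac{a}{p-2}+\tfrac{p(p+1-a)}3\ge p+1$ when some coset is a singleton), it is unnecessary machinery.
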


\setcounter{remark}{0}
\begin{remark}
As an easy corollary of the theorem, if $p\ge 11$ is a prime, and $f\in
L(\F_p^2)$ is a nonzero function, then letting $S:=\supp f$ and
$X:=\supp\hf$, we have either
  $$ \min\{|S|,|X|\} \ge \frac32(p-1), $$
or
  $$ \max\{|S|,|X|\} \ge 3p-1, $$
unless $S$ and $X$ are exceptional as specified in the statement of the
theorem.
\end{remark}

\begin{remark}
The exceptional cases of the theorem are classified by Lemma~\refl{cosets2};
specifically, in these cases one of the following holds:
\begin{itemize}
\item[i)] there exist a nonzero, proper subgroup $H<\F_p^2$, an element
    $g_0\in\F_p^2$, characters $\chi_1,\chi_2\in\wh{\F_p^2}$ with
    $\chi_2\notin\chi_1H^\perp$, and coefficients $c_1,c_2\in\C$ at most
    one of which is equal to $0$, such that
    $$ f(g) = \begin{cases}
                c_1\chi_1(g)+c_2\chi_2(g)\ &\text{if}\ g\in g_0+H, \\
                0                        \ &\text{if}\ g\notin g_0+H;
              \end{cases} $$
\item[ii)] there exist a nonzero, proper subgroup $H<\F_p^2$, elements
    $g_1,g_2\in\F_p^2$ with $g_2\notin g_1+H$, a character
    $\chi_0\in\wh{\F_p^2}$, and coefficients $c_1,c_2\in\C$ at most one
    of which is equal to $0$, such that
    $$ f(g) = \begin{cases}
                c_i\chi_0(g)\ &\text{if}\ g\in g_i+H,\ i\in\{1,2\}, \\
                0           \ &\text{if}\ g\notin(g_1+H)\cup(g_2+H).
              \end{cases} $$
\end{itemize}
\end{remark}

As a consequence of Theorems~\reft{k=p-1}, \reft{k=p-2}, and~\reft{meshulam},
we have
\begin{corollary}\label{c:uppergray}
If $p>3$ is a prime, and $f\in L(\F_p^2)$ is a nonzero function, then letting
$S:=\supp f$ and $X:=\supp\hf$ we have
  $$ |S||X| \ge 3p(p-2), $$
unless either $\min\{|S|,|X|\}\le 2$, or the smallest of the sets $S$ and $X$
is a coset of a nonzero subgroup of the corresponding group, possibly with
one element missing, and the largest is either a coset, or a union of two
cosets of the orthogonal subgroup.
\end{corollary}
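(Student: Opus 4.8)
The plan is to patch together the three lower bounds already at our disposal --- Theorem~\reft{meshulam} in the form~\refe{meshalt}, Theorem~\reft{k=p-1}, and Theorem~\reft{k=p-2} --- each of which is sharp only in a limited range of support sizes, and to optimize the product $|S|\,|X|$ separately in each range. Since replacing $f$ by its Fourier transform $\hf$ interchanges the roles of $S$ and $X$ up to a harmless reflection (by Fourier inversion) and preserves both support sizes, we may assume $|S|\le|X|$; write $m:=|S|$ and $M:=|X|$, so $M\ge m$ and the goal is $mM\ge 3p(p-2)$ unless one of the exceptions listed in Corollary~\refc{uppergray} holds. Throughout we use $p>3$, i.e.\ $p\ge5$, which is exactly what makes the numerical inequalities below work.

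First I would set aside the case $m\le 2$, which is literally one of the admitted exceptions. For $m\ge 3$ I would split into four regimes:
\begin{itemize}
\item[(i)] $3\le m\le p-2$. Theorem~\reft{meshulam}, applied with the consecutive divisors $1<p$ of $p^2$, gives $M\ge p(p+1-m)$, hence $mM\ge p\,m(p+1-m)$; the quadratic $m(p+1-m)$ attains its minimum over $[3,p-2]$ at the endpoints, where it equals $3(p-2)$, so $mM\ge 3p(p-2)$, with equality only at $m\in\{3,p-2\}$. Theorem~\reft{meshulam} has no exceptional cases, so this regime is clean.
\item[(ii)] $p-1\le m<\tfrac32(p-1)$. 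Here Theorem~\reft{k=p-2} applies, its second alternative $m\ge\tfrac32(p-1)$ being excluded; thus either $M\ge 3(p+1)-\tfrac{3m}{p-2}$, or $f$ has the special form singled out in that theorem, which is precisely one of the exceptions in Corollary~\refc{uppergray}. In the former case $mM\ge 3(p+1)m-\tfrac{3}{p-2}m^2$, a downward parabola in $m$ whose vertex $\tfrac12(p+1)(p-2)$ exceeds $\tfrac32(p-1)$ for $p\ge5$; it is therefore increasing on the range in question and minimized at $m=p-1$, where the desired inequality reduces, after a short computation, to $p^2-3p+1\ge0$.
\item[(iii)] $\tfrac32(p-1)\le m<2(p-1)$. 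Here Theorem~\reft{k=p-1} gives $M\ge 2(p+1)-\tfrac{2m}{p-1}$; its only exceptional configuration has $|S|=|X|=p$, which cannot occur in this range since $p<\tfrac32(p-1)$ for $p\ge5$. Then $mM\ge 2(p+1)m-\tfrac{2}{p-1}m^2$, again increasing on the range (vertex at $\tfrac12(p^2-1)$) and hence minimized at $m=\tfrac32(p-1)$, where its value $3(p^2-1)-\tfrac92(p-1)$ exceeds $3p(p-2)$ by $\tfrac32(p+1)$.
\item[(iv)] $m\ge 2(p-1)$. No theorem is needed: $mM\ge m^2\ge 4(p-1)^2$, and $4(p-1)^2-3p(p-2)=(p-1)^2+3>0$.
\end{itemize}
For integer $m$ the intervals $[3,p-2]$, $[p-1,\tfrac32(p-1))$, $[\tfrac32(p-1),2(p-1))$, $[2(p-1),\infty)$ are consecutive (note $\tfrac32(p-1)$ is an integer, $p$ being odd), so together with $m\le2$ they exhaust all cases, and in each we obtained either $mM\ge 3p(p-2)$ or one of the stated exceptions.

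I do not expect a genuine obstacle: the work is essentially bookkeeping. The only care required is in choosing the four breakpoints $3$, $p-2$, $\tfrac32(p-1)$, $2(p-1)$ so that the bound available in each regime actually delivers $3p(p-2)$, and in matching the exceptional configurations --- that of Theorem~\reft{k=p-1} never arising in its regime, and that of Theorem~\reft{k=p-2} being verbatim one of the corollary's exceptions. The one conceptual point worth emphasizing is that the three inputs are genuinely complementary: near $m=3$ and $m=p-2$ only Theorem~\reft{meshulam} (which supplies the strong bound $M\ge p(p+1-m)$, of order $p^2$) is of any use, and there the value $3p(p-2)$ is attained exactly, whereas for $m$ between roughly $p$ and $2(p-1)$ Theorem~\reft{meshulam} is useless and the sharper Theorems~\reft{k=p-1} and~\reft{k=p-2} must be invoked.
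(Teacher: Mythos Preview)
Your proof is correct and follows essentially the same approach as the paper: a case analysis that patches together Theorems~\reft{meshulam}, \reft{k=p-1}, and~\reft{k=p-2}, optimizing the product $|S||X|$ in each range. The one notable organisational difference is that you split into four regimes according to the size of the \emph{smaller} support $m=\min\{|S|,|X|\}$, whereas the paper splits into three regimes according to the size of the \emph{larger} support (using Theorem~\reft{k=p-1} when $\max\le 2p-1$, Theorem~\reft{k=p-2} when $2p\le\max\le 3p-1$, and Theorem~\reft{meshulam} when $\max\ge 3p$, together with the trivial bound from $|X|\ge 3$ when $\max>p^2-2p$); both decompositions work and the computations are comparably short.
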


For the reader not convinced by Figure~\reff{BoundComparison}, where the
estimate of Corollary~\refc{uppergray} corresponds to the upper gray
hyperbola, we include the formal proof in the appendix.

The proof of Corollary~\refc{uppergray} relies on
Theorems~\reft{k=p-1},~\reft{k=p-2}, and~\reft{meshulam}. Using, instead of
Theorem~\reft{k=p-2}, one of the Theorems~\reft{as2} and~\reft{as3} below,
one can find constants $K>3$ and $N$ such that $|S||X|\ge Kp^2$, unless $S$
or $X$ is contained in a union of $N$ proper cosets. Indeed, it is easy to
see that, assuming Conjecture~\refj{conjecture}, for \emph{any} real $K$
there exists $N=N(K)$ with the property just mentioned. While we seem to be
far from establishing Conjecture~\refj{conjecture} in full generality, we
feel that it may be possible to prove at least the estimate $|S||X|\ge Kp^2$
developing further the ideas behind the proofs of Theorems~\reft{as2}
and~\reft{as3}.

\begin{theorem}\label{t:as2}
If $p\ge 31$ is a prime and $f\in L(\F_p^2)$ is a nonzero function, then
writing $S:=\supp f$ and $X:=\supp\hf$, for any $\eps\in(0,1)$ we have either
  $$ \min\{|S|,|X|\} \ge 2(1-\eps)p $$
or
  $$ \max\{|S|,|X|\} \ge \eps p^{3/2}, $$
except if the smallest of the sets $S$ and $X$ is contained in a coset of a
nonzero, proper subgroup of the corresponding group.
\end{theorem}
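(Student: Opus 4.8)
The plan is to reduce, after possibly interchanging the roles of $f$ and $\hf$ (legitimate since the inverse Fourier transform is, up to reflection and normalisation, again a Fourier transform, and neither operation changes support sizes), to the following assertion: \emph{if $S$ is not contained in a coset of a nonzero, proper subgroup of $\F_p^2$ and $|S|<2(1-\eps)p\le|X|$, then $|X|\ge\eps p^{3/2}$.} Indeed, writing WLOG $|S|\le|X|$ and assuming $S$ is not exceptional, if $|S|\ge2(1-\eps)p$ the first alternative holds, so we may assume $|S|<2(1-\eps)p$ and must produce the second. The non‑exceptionality of $S$ says precisely that for every \emph{direction} $H$ (that is, every one of the $p+1$ subgroups of order $p$) the set $S$ meets at least two cosets of $H$; recall also that the $H^\perp$, $H$ ranging over directions, partition $\wh{\F_p^2}\stm\{1\}$ into $p+1$ blocks of size $p-1$.

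The workhorse is a \emph{projection lemma}: for every direction $H$, writing $m$ for the number of cosets of $H$ meeting $S$, one has $|X|\ge(p+1-|S|/m)(p+1-m)$. To prove it, list the cosets of $H$ meeting $S$ as $g_1+H\longc g_m+H$, decompose $f=f_1\longp f_m$ accordingly, and split the $p^2$ characters of $\F_p^2$ into the $p$ cosets of $H^\perp$, indexed by the restrictions to $H$. On the coset indexed by $\bar\chi\in\wh H$ the function $\hf$ equals, up to a unimodular factor, the function $\omega\mapsto\sum_{i=1}^m c_i(\bar\chi)\,\omega^{t_i}$ of $\omega$, where $c_i$ is the (normalised) Fourier transform over $H\cong\F_p$ of $f$ restricted to $g_i+H$, the exponents $t_i\in\F_p$ are pairwise distinct, and $\omega$ runs bijectively over the $p$-th roots of unity as $\chi$ runs over the coset. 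Theorem~\reft{birotao} applied to $f$ restricted to $g_i+H$ gives that $c_i$ vanishes at most $|S\cap(g_i+H)|-1$ times, so the number $z$ of $\bar\chi$ annihilating \emph{all} the $c_i$ simultaneously is at most $\min_i|S\cap(g_i+H)|-1\le|S|/m-1$; and Theorem~\reft{birotao} applied once more, now to the sequence $(c_i(\bar\chi))_i$ regarded (via its $\le m$ nonzero entries) as a function on $\F_p$, shows that for each of the remaining $p-z$ values of $\bar\chi$ the function $\hf$ is nonzero at no fewer than $p+1-m$ characters of the corresponding coset. Summing over those $p-z\ge p+1-|S|/m$ cosets gives the lemma.

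Now apply the lemma at a direction $H_*$ minimising $m$, so $m=\min_H m\in[2,p]$. Since $m\ge2$ we have $2(1-\eps)/m\le1-\eps$, hence $p+1-|S|/m>p+1-2(1-\eps)p/m\ge\eps p$, and the lemma yields $|X|>\eps p\,(p+1-m)$. Thus if $m\le p+1-\sqrt p$ we already get $|X|\ge\eps p^{3/2}$ and are done. The remaining, and serious, case is the \emph{near‑blocking‑set regime} $m\ge p+2-\sqrt p$: $S$ meets all but fewer than $\sqrt p$ cosets of $H$ in every direction $H$. Here the $\ge p+2-\sqrt p$ cosets of $H_*$ meeting $S$ have average size $<2p/(p+2-\sqrt p)<2.44$ (using $p\ge31$), so the sparsest among them has at most $2$ points. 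If it has $2$ points, then every coset of $H_*$ meeting $S$ has $\ge2$ points, whence $|S|\ge2(p+2-\sqrt p)$; together with $|S|<2(1-\eps)p$ this forces $\eps<(\sqrt p-2)/p$, so $\eps p^{3/2}<p-2\sqrt p<|S|\le|X|$, done. If it has exactly $1$ point, then (taking $g_1$ to be that point) $c_1$ is a nonzero constant, so $z=0$ in the lemma and $|X|\ge p(p+1-m)$; this settles every case with $m\le p+1-\eps\sqrt p$, and in the complementary case $p-m\le\eps\sqrt p-2$ we invoke a quantitative stability version of the classical lower bound $2p-1$ for affine blocking sets (Jamison, Brouwer--Schrijver) — namely that a set meeting all but $<\eps\sqrt p$ cosets in every direction has at least $2p-c\eps\sqrt p$ points for an absolute constant $c$ — so that $2p-c\eps\sqrt p\le|S|<2(1-\eps)p$ gives $2\eps p<c\eps\sqrt p$, i.e.\ $p<c^2/4$, which contradicts $p\ge31$ as soon as $c\le11$; thus this last case is vacuous.

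I expect the last step — the quantitative near‑blocking‑set input, with a constant good enough to make the cutoff $p\ge31$ honest — to be the principal obstacle, since it is the one point at which the argument must leave the Fourier‑analytic framework (it is essentially a Rédei/lacunary‑polynomial statement about the directions determined by a point set in the affine plane over $\F_p$), and since the threshold $2(1-\eps)p$ appears to be calibrated precisely so that it is this geometric fact, rather than the projection lemma, that governs the range $|S|\approx2p$.
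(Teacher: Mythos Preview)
Your projection lemma is correct and is a pleasant sharpening of the paper's Lemma~\refl{SXmn}: from $K_S=m$ cosets one gets $|X|\ge(p+1-n_S)(p+1-m)$, not merely $|X|\ge n_X(p+1-n_S)$. The reduction and the case analysis down to the ``near-blocking-set regime'' are clean.

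The gap is exactly where you flag it, and it is a genuine one. The input you call for --- a set $S\subset\F_p^2$ missing at most $\ell$ lines in \emph{every} direction satisfies $|S|\ge 2p-c\ell$ with an absolute $c$ --- is not available. The paper's own Lemma~\refl{pencils} (the only stability tool it proves) needs the missed lines to be confined to $k$ directions, giving $|S|\ge 2p-k-m$; with $k=p+1$ this is useless. The naive covering argument (add one point per missed line) gives only $|S|\ge 2p-1-\ell(p+1)$, and the pairing trick used in the proof of Theorem~\reft{k=p-2} yields, for $\ell=1$, only $|S|\ge\frac32(p-1)$. If that last bound is anywhere close to sharp, your stability inequality would require $c\ge(p+3)/2$, and the conclusion $p<c^2/4$ becomes vacuous for all $p$. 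So the missing step is not a matter of locating a reference or tightening a constant; without a substantially new idea the argument does not close, even for $p$ much larger than $31$.

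The paper's proof sidesteps this completely and follows a different route. It takes the opposite normalisation ($|X|\le|S|$, with $X$ not on a line), uses Lemma~\refl{lines} to pick a direction $H$ in which $X$ has no line with more than $\sqrt{|X|}+1$ points but some line with at least two. From Lemma~\refl{SXmn} it gets $n_X=1$ and $K_S=p$, and then it builds an auxiliary function
\[
   \Del_g := f\ast\big(f\cdot(1_{g+H}-1_{g_0+H})\big),
\]
whose Fourier support $Y$ lies in $X$ minus all the \emph{isolated} characters (those alone on their $H^\perp$-line). Counting isolated characters forces $K_Y$ to be small; on the other hand $|T|=|\supp\Del_g|\le|S|(k_g+k_{g_0})$ is small when $g$ is chosen with $k_g$ small. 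Running Lemma~\refl{SXmn} on $\Del_g$ instead of $f$ produces a contradiction unless $\Del_g\equiv 0$ for all such $g$; but that forces, via~\refe{meshalt} applied to a character sum $R(g)$, every nonisolated $H^\perp$-line to carry more than $\tfrac13p$ points of $X$, contradicting the choice of $H$. No blocking-set stability enters at all.
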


\setcounter{remark}{0}
\begin{remark}
The bound $p\ge 31$ is certainly not best possible. It can be relaxed by
fine-tuning the parameters of our proof and, perhaps, can be dropped
altogether.
\end{remark}

\begin{remark}
The exceptional case of Theorem~\reft{as2}, where the smallest of the sets
$S$ and $X$ is contained in a coset of a proper subgroup, is directly
addressed in Lemma~\refl{cosets2}.
\end{remark}

\begin{remark}
To put Theorem~\reft{as2} in a context, the reader is recommended to review
the paragraph following Conjecture~\refj{conjecture} observing, on the other
hand, that the assertion of Theorem~\reft{as2} can be equivalently written as
  $$ \frac12\,\min\{|S|,|X|\} + \frac1{\sqrt p}\, \max\{|S|,|X|\} \ge p, $$
apart from the exceptional case specified in the theorem. Thus,
Theorem~\reft{rational} gives a stronger estimate than Theorem~\reft{as2},
while the latter theorem does not impose the rationality assumption.
\end{remark}

\begin{remark}
The coefficient $2(1-\eps)$ in the statement of Theorem~\reft{as2} cannot be
replaced with~$2$. This is readily seen by fixing two distinct nonzero,
proper subgroups $H_1,H_2<\F_p^2$, and letting $f$ to be the difference of
their indicator functions: $f=1_{H_1}-1_{H_2}$; in this case
$|S|=|X|=2(p-1)$.
\end{remark}

\begin{theorem}\label{t:as3}
If $p$ is a prime, and $f\in L(\F_p^2)$ is a nonzero function, then writing
$S:=\supp f$ and $X:=\supp\hf$, for any $\eps\in(0,1)$ we have either
  $$ \min\{|S|,|X|\} \ge 3\,(1-\eps)p, $$
or
  $$ \max\{|S|,|X|\} \ge \frac16\,\eps p^{4/3}, $$
except if the smallest of the sets $S$ and $X$ is contained in a coset of a
nonzero, proper subgroup, or in a union of two such cosets (possibly
corresponding to different subgroups).
\end{theorem}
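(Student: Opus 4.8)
\textbf{Proof strategy for Theorem~\reft{as3}.}

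The plan is to mimic the structure of the proof of Theorem~\reft{as2}, but using a ``cubic'' refinement in place of the ``quadratic'' one. The basic dichotomy is the same: either $\min\{|S|,|X|\}$ is already large (at least $3(1-\eps)p$, giving the first alternative), or it is small, in which case we must show that $\max\{|S|,|X|\}$ is correspondingly large, unless the small support set is covered by at most two proper cosets. Without loss of generality assume $|S|\le|X|$, so the working hypothesis is $|S|<3(1-\eps)p$, i.e. $|S|$ is a bit below $3p$, and we want $|X|\ge\frac16\eps p^{4/3}$.

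The key device, as in Theorem~\reft{as2}, is to exploit translates of $f$ and the structure of $\F_p^2$ as a union of $p+1$ lines through the origin (the ``directions''). First I would partition $\F_p^2$ into the $p+1$ parallel classes of affine lines. For each direction, restricting $\hf$ to the cosets of the corresponding rank-$1$ subgroup $F\le\wh{\F_p^2}$ and applying Theorem~\reft{birotao} (the one-dimensional bound) fibre-by-fibre relates the line-intersection profile of $S$ with that of $X$ in the orthogonal direction. Since $|S|<3p$, a counting/pigeonhole argument over the $p+1$ directions shows that in ``most'' directions $S$ meets each line in at most $2$ points — more precisely, the number of directions in which some line meets $S$ in $\ge3$ points is $O(|S|/p)=O(1)$. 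If $S$ meets every line of some direction in at most $2$ points, the one-dimensional uncertainty principle (applied along the $p$ parallel lines in that direction) forces the Fourier transform, restricted to each of the $p$ orthogonal cosets, to have support of size $\ge p-1$ on that coset unless the restriction of $f$ to the corresponding line is (a scalar multiple of) a single exponential on $\le2$ points; iterating this over the $p$ parallel fibres and then over the many good directions is what produces the $p^{4/3}$-type lower bound on $|X|$, the exponent $4/3$ coming from optimizing the trade-off between the number of good directions used and the per-direction gain, exactly as the exponent $3/2$ arises for two cosets in Theorem~\reft{as2}.

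The main obstacle will be the exceptional-case analysis: one must show that the \emph{only} way the argument above can fail is for $S$ to lie in a union of at most two cosets of proper subgroups (possibly with different subgroups). Concretely, failure of the counting step forces $S$ to be concentrated on few lines in too many directions simultaneously; a combinatorial rigidity argument — of the flavour of a Besicovitch/Kakeya-type or a ``few lines cover many incidences'' lemma in $\PG(2,p)$ — should pin $S$ down to at most two full lines, and then Lemma~\refl{cosets2} (invoked exactly as in the $k=p-2$ case) identifies the corresponding functions $f$ and controls $\hf$. The bookkeeping of error terms to extract the clean constant $\tfrac16$ and the clean exponent $\tfrac43$, and verifying that no small-$p$ degeneracies intrude (here, unlike Theorem~\reft{as2}, there is no explicit lower bound on $p$, so the estimates must be arranged to be vacuous or trivially true when $p$ is small), will require care but should be routine once the structural dichotomy is in place. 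I would carry out the steps in the order: (1) set up the parallel-class decomposition and the fibrewise application of Theorem~\reft{birotao}; (2) prove the counting lemma bounding the number of ``bad'' directions; (3) in a good direction, derive the per-coset lower bound on $|X|$ from the one-dimensional principle, tracking the exceptional fibres; (4) optimize over the number of good directions to get the $p^{4/3}$ bound; (5) handle the failure case via the covering/rigidity lemma and Lemma~\refl{cosets2}.
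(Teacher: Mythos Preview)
You have swapped the roles of $S$ and $X$ relative to the paper (harmless, by duality), but your plan has a real gap at step~(2). The claim that the number of directions in which some line meets $S$ in $\ge3$ points is $O(|S|/p)=O(1)$ is false. Take $S$ to be two parallel $H$-lines together with one extra point $x$ off them, so $|S|=2p+1$ and $S$ is not contained in a union of two cosets. For every direction other than that of $H$, the line through $x$ in that direction meets each of the two $H$-lines once, hence contains three points of $S$; thus $p$ of the $p+1$ directions are bad. Your step~(2) therefore cannot be a simple pigeonhole, and the chain (2)--(4) as described does not get off the ground. Nor is the exponent $4/3$ produced by ``optimising over good directions'': both the paper's proof of Theorem~\reft{as2} and its proof of Theorem~\reft{as3} work in a \emph{single} direction throughout.

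What the paper actually does (with the small set called $X$): it uses Lemmas~\refl{lines} and~\refl{richlines} to find one subgroup $H$ such that every $H^\perp$-line meets $X$ in at most $(p+5)/2$ points while some such line meets it in at least three. The $p^{4/3}$ then comes from a cubic convolution. With $F_g:=f\cdot(1_{g+\gam+H}-1_{g+H})$ for a fixed $\gam\notin H$, and for additive quadruples $g_1+g_2=g_3+g_4$, one sets
\[
  \Del_\bg := f\ast\bigl(F_{g_1}\ast F_{g_2}-F_{g_3}\ast F_{g_4}\bigr);
\]
by design $\supp\wh{\Del_\bg}\subseteq X$ misses every $H^\perp$-line carrying at most two points of $X$, while $|\supp\Del_\bg|\lesssim|S|\cdot(|S|/p)^2=|S|^3/p^2$, and balancing this against Lemma~\refl{SXmn} is what forces $|S|\gtrsim p^{4/3}$ unless $\Del_\bg\equiv0$. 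The vanishing case is handled by a new ingredient absent from your plan, Lemma~\refl{AQ}: if $h\in L(\F_p)$ satisfies $h(a_1)h(a_2)=h(a_3)h(a_4)$ for all additive quadruples from a set $A$ of density $>2/3$, then $|\supp\hh|\in\{1\}\cup[|A|,p]$. Applied to $h(g)=\wh{F_g}(\chi)$ this contradicts the choice of $H$. Your fibrewise use of Theorem~\reft{birotao} is essentially Lemma~\refl{SXmn}, which by itself gives only $K_X\ge p+1-n_S$ and no route to $p^{4/3}$.
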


\setcounter{remark}{0}
\begin{remark}
In the situation where $|X|\le|S|$, the exceptional cases of
Theorem~\reft{as3} are classified by
Lemmas~\refl{cosets2}--\refl{twononparallel} in the appendix; the situation
where $|S|\le|X|$ can be dealt with using duality.
\end{remark}

\begin{remark}
The two inequalities of Theorem~\reft{as3} can be merged together to read
  $$ \frac13 \min\{|S|,|X|\} + \frac{6}{p^{1/3}}\, \max\{|S|,|X|\} \ge p, $$
to be compared against the case $k=3$ of Conjecture~\refj{conjecture}.
\end{remark}

\begin{remark}
The coefficient $3(1-\eps)$ cannot be replaced with $3$. This is readily seen
by taking three pairwise distinct, nonzero, proper subgroups
$H_1,H_2,H_3<\F_p^2$, and letting $f:=1_{H_1}+1_{H_2}-2\cdot 1_{H_3}$; in
this case $|S|=|X|=3(p-1)$.
\end{remark}

Theorem~\reft{as3} is easily seen to imply Theorems~\reft{k=p-1}
and~\reft{k=p-2} for sufficiently large primes $p$, apart from the slightly
less accurate classification of the exceptional cases. We believe, hoverer,
that the two latter theorems are worth stating separately as their proofs are
short, non-technical, and based on the ideas distinct from those used in the
proof of Theorem~\reft{as3}.

In the next section we briefly summarize the basic definitions, notation, and
facts about the Fourier transform in finite abelian groups.
Section~\refs{observations} contains some simple, but important observations
preparing the ground for the proofs of Theorems~\reft{rational}--\reft{as3};
the proofs themselves are presented in
Sections~\refs{p-rational}--\refs{p-as3}, respectively. In the appendix we
state and prove Lemmas~\refl{cosets1}--\refl{twononparallel} classifying the
exceptional cases arising in Theorems~\reft{k=p-1}--\reft{as3}, and also
prove Corollary~\refc{uppergray}; these results were referred to above and,
with the exception of Lemma~\refl{cosets1}, are not used elsewhere.

\section{Fourier transform: notation and basics}\label{s:notation}

Although familiarity with Fourier transform is assumed, the brief review
below can be useful. For the reader's convenience, we include here the
notation that has already been introduced above. The proofs, on the other
hand, are omitted; they can be found in any standard textbook on the subject,
like~\refb{te}.

For a finite abelian group $G$, we denote by $L(G)$ the vector space of all
complex-valued functions on $G$, and by $\hG$ the dual character group. Every
finite abelian group is isomorphic to its dual, and is naturally isomorphic
to its ``double-dual''; this allows one to switch the roles of $G$ and $\hG$.

We are primarily interested in the situation where $G$ is the elementary
abelian $p$-group of rank $2$, which we denote $\F_p^2$, where $p$ is a
prime.

For a character $\chi\in\hG$, by $\ochi$ we denote the conjugate character;
that is, $\ochi(g)=\chi(-g)$ is the complex conjugate of $\chi(g)$, for any
$g\in G$. The principal character will be denoted $1$; thus, $1=1_G$, with
the convention that $1_A$ denotes the indicator function of the set $A$.

The Fourier transform of a function $f\in L(G)$ is the function
 $\hf\in L(\hG)$ defined by
  $$ \hf(\chi)
           = \frac1{|G|}\,\sum_{g\in G} f(g)\,\ochi(g),\quad \chi\in\hG, $$
and the inversion formula is
  $$ f(g) = \sum_{\chi\in\hG} \hf(\chi)\,\chi(g), \quad g\in G. $$
The values $\hf(\chi)$ are called the Fourier coefficients of the function
$f$.

The convolution $f_1\ast f_2$ of the functions $f_1,f_2\in L(G)$ is defined
by
  $$ f_1\ast f_2\colon g
       \mapsto \frac1{|G|}\,\sum_{\sub{g_1,g_2\in G\\ g_1+g_2=g}}
                                          f_1(g_1)f_2(g_2),\quad g\in G. $$
We have $\wh{f_1\ast f_2}=\wh{f_1}\cdot\wh{f_2}$ and, conversely,
$\wh{f_1f_2}=\wh{f_1}\ast\wh{f_2}$ for any $f_1,f_2\in L(G)$, with the
convolution on the dual group defined by
  $$ u_1\ast u_2\colon \chi \mapsto
            \sum_{\sub{\chi_1,\chi_2\in\hG \\ \chi_1\chi_2=\chi}}
            u_1(\chi_1)u_2(\chi_2), \quad \chi\in\hG, $$
where $u_1,u_2\in L(\hG)$. (The minor normalization inconsistency arising
here can be formally resolved by looking at the ordered pairs $(G,\hG)$
instead of single groups $G$.)

The subgroup of $\hG$ orthogonal to a given subgroup $H\le G$ is
  $$ H^\perp := \{\chi\in\hG\colon H\le\ker\chi \}, $$
and the subgroup of $G$ orthogonal to a given subgroup $F\le\hG$ is
  $$ F^\perp := \cap_{\chi\in F} \ker\chi. $$
The subgroup $H^\perp$ is naturally isomorphic to the character group
$\wh{G/H}$.

We have $\wh{1_H}=(|H|/|G|)\cdot1_{H^\perp}$ and, more generally,
$\wh{1_{g+H}}(\chi)=(|H|/|G|)\,\ochi(g)\cdot1_{H^\perp}(\chi)$ for any
element
 $g\in G$ and character $\chi\in\hG$.

Finally, $(H^\perp)^\perp=H$ for any subgroup $H\le G$, and similarly
$(F^\perp)^\perp=F$ for any subgroup $F\le\hG$; as a result, one can speak
about pairs of mutually orthogonal subgroups.

\section{Basic observations}\label{s:observations}

Let $G$ be a finite abelian group.

For a function $f\in L(G)$, a subgroup $H\le G$, and an element $g\in G$, the
Fourier coefficients of the function $f\cdot 1_{g+H}$ (coinciding with $f$ on
the coset $g+H$ and vanishing outside of it) are
\begin{align}
  \wh{f\cdot1_{g+H}}(\chi)
    &= (\hf\ast\wh{1_{g+H}})(\chi) \notag \\
    &= \sum_{\psi\in\hG} \hf(\chi\psi) \cdot
                   \frac{\psi(g)}{|H^\perp|}\,1_{H^\perp}(\opsi) \notag \\
    &=\frac1{|H^\perp|}\sum_{\psi\in H^\perp}\hf(\chi\psi)\psi(g),
                                            \quad \chi\in\hG. \label{e:psf}
\end{align}
A more explicit form of this relation is
\begin{equation}\label{e:psiE}
  \sum_{\psi\in H^\perp}\hf(\chi\psi)\psi(g)
         = \frac{\ochi(g)}{|H|}\,\sum_{h\in H} f(g+h)\,\ochi(h),
                                                          \quad \chi\in\hG.
\end{equation}

Given a subgroup $H\le G$ and a nonzero function $f\in L(G)$, let
 $S:=\supp f$ and $X:=\supp\hf$, and denote by $n_S$ the smallest positive
number of elements of $S$ contained in a coset of $H$, and by $n_X$ the
smallest positive number of characters from $X$ contained in a coset of
$H^\perp$:
\begin{equation}\label{e:nSnX}
  n_S := \min\{|(s+H)\cap S|\colon s\in S\},\ %
                      n_X:=\min\{|\chi H^\perp\cap X|\colon \chi\in X \}.
\end{equation}
Also, let $K_S$ be the number of $H$-cosets having a nonempty intersection
with $S$, and let $K_X$ be the number of $H^\perp$-cosets having a nonempty
intersection with $X$:
\begin{equation}\label{e:KSKX}
   K_S := |S+H|/|H|,\ K_X := |XH^\perp|/|H^\perp|.
\end{equation}
Thus, $S$ and $X$ depend on $f$, while $n_S,n_X,K_S$, and $K_X$ depend on
both $f$ and $H$, although this dependence is not reflected explicitly by our
notation.

Recall that a group is called \emph{prime} if it has prime order (in which
case it is cyclic).
\begin{lemma}\label{l:SXmn} 
Suppose that $H$ is a subgroup of the finite abelian group $G$, and
 $f\in L(G)$ is a nonzero function, and let $S,X,n_S,n_X,K_S$, and $K_X$ be
as above. If $H$ is prime, then $K_X\ge|H|+1-n_S$, whence
 $|X|\ge n_X(|H|+1-n_S)$. Similarly, if $H$ is co-prime (meaning that
$G/H$ is prime), then $K_S\ge|H^\perp|+1-n_X$, whence
 $|S|\ge n_S(|H^\perp|+1-n_X)$.
\end{lemma}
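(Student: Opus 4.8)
The plan is to reduce the statement to Theorem~\reft{birotao} applied to a suitable function on the prime group $H$ (or $G/H$). By duality — exchanging the roles of $G$ and $\hG$, which swaps $S\leftrightarrow X$, $H\leftrightarrow H^\perp$, $n_S\leftrightarrow n_X$, and $K_S\leftrightarrow K_X$, and replaces ``$H$ prime'' by ``$H$ co-prime'' — it suffices to prove the first assertion: if $H$ is prime, then $K_X\ge|H|+1-n_S$.

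First I would fix a coset $s+H$ realizing the minimum in the definition of $n_S$, so that $|(s+H)\cap S|=n_S$, and consider the restriction $f\cdot 1_{s+H}$. Relation~\refe{psiE} with $g=s$ and $\chi=1$ shows that, as a function of $\psi$ ranging over $H^\perp$, the quantity $\sum_{\psi\in H^\perp}\hf(\psi)\psi(s)$ — and more generally $\sum_{\psi\in H^\perp}\hf(\chi\psi)\psi(s)$ for each fixed $\chi$ — is, up to the nonzero factor $\ochi(s)/|H|$, the ``$H$-Fourier transform'' of the function $h\mapsto f(s+h)$ on $H$ evaluated at the character $\psi$ of $H$ corresponding to $\chi\psi|_H$. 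The point is that $H^\perp\cong\wh{H\backslash G}$ is not quite what we want; rather, restricting characters of $G$ to $H$ gives a surjection $\hG\to\hH$ with kernel $H^\perp$, so the family $\{\chi\psi:\psi\in H^\perp\}$ for a fixed $\chi$ is exactly a coset of $H^\perp$ in $\hG$, and as $\psi$ runs over $H^\perp$ the characters $\chi\psi|_H$ all equal $\chi|_H$. So instead I would work directly with the function $\phi\in L(H)$ defined by $\phi(h):=f(s+h)$: its $H$-Fourier transform $\hat\phi$ (with respect to the uniform measure on $H$) satisfies, by the standard relation between Fourier transforms and restriction to cosets,
$$ \hat\phi(\eta) \;=\; \frac1{|H|}\sum_{h\in H} f(s+h)\,\overline\eta(h) \;=\; \overline{\chi_\eta(s)}\sum_{\psi\in H^\perp}\hf(\chi_\eta\psi)\,\psi(s) $$
for any $\chi_\eta\in\hG$ restricting to $\eta$ on $H$, where I have used~\refe{psiE}. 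Thus $\hat\phi(\eta)\ne 0$ forces $\hf(\chi_\eta\psi)\ne 0$ for at least one $\psi\in H^\perp$, i.e. the coset $\chi_\eta H^\perp$ meets $X$.

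Now $\phi$ is a nonzero function on the prime group $H$ (nonzero because $|(s+H)\cap S|=n_S\ge 1$), with $|\supp\phi|=n_S$. By Theorem~\reft{birotao} applied to $H$, we get $|\supp\hat\phi|\ge|H|+1-n_S$. Each $\eta\in\supp\hat\phi$ determines a distinct coset $\chi_\eta H^\perp$ (distinct $\eta$'s give distinct restrictions, hence distinct cosets of $H^\perp$), and each such coset meets $X$; therefore $K_X=|XH^\perp|/|H^\perp|\ge|\supp\hat\phi|\ge|H|+1-n_S$. Finally, each of these $K_X$ cosets of $H^\perp$ that meets $X$ contains at least $n_X$ elements of $X$ by the definition of $n_X$, giving $|X|\ge n_X K_X\ge n_X(|H|+1-n_S)$, as claimed. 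The co-prime case follows by applying this to $\hf$ in place of $f$ and $H^\perp$ in place of $H$, using the natural self-duality $\wh{\wh G}\cong G$.

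The only delicate point is the bookkeeping in the second paragraph: making sure that the relation~\refe{psiE} is being read correctly as the $H$-Fourier transform of the restricted function, and that the map $\eta\mapsto\chi_\eta H^\perp$ is a well-defined injection from $\hH$ into the set of $H^\perp$-cosets in $\hG$. This is standard (it is exactly the statement that $\hG/H^\perp\cong\wh{H}$ canonically, together with the fact that $\wh{\phi}$ computed on $H$ agrees, up to the harmless phase $\overline{\chi(s)}$, with the partial sums on $H^\perp$-cosets appearing in~\refe{psiE}), but it is the place where an error would most easily slip in, so I would state it carefully. Everything else is a direct invocation of Theorem~\reft{birotao} and the definitions~\refe{nSnX}–\refe{KSKX}.
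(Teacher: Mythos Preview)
Your argument is correct and is essentially the same as the paper's: restrict $f$ to a coset $s+H$ realizing $n_S$, view it as a function $\phi$ on the prime group $H$, apply Theorem~\reft{birotao} to obtain $|\supp\hat\phi|\ge|H|+1-n_S$, and use~\refe{psiE} to see that each $\eta\in\supp\hat\phi$ forces a distinct $H^\perp$-coset to meet $X$; the co-prime case follows by duality. One cosmetic slip: from~\refe{psiE} the phase relating $\hat\phi(\eta)$ to $\sum_{\psi\in H^\perp}\hf(\chi_\eta\psi)\psi(s)$ is $\chi_\eta(s)$ rather than $\overline{\chi_\eta(s)}$, but this does not affect the vanishing/nonvanishing conclusion.
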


\begin{proof}
Fix $g\in G$ with $|(g+H)\cap S|=n_S$, and consider the function
 $f_g\in L(H)$ defined by $f_g(h):=f(g+h),\ h\in H$. In terms of this
function, \refe{psiE} can be rewritten as
\begin{equation}\label{e:psiEprime}
  \ochi(g)\,\wh{f_g}(\chi\vert_H)
       = \sum_{\psi\in H^\perp} \hf(\chi\psi)\psi(g), \quad \chi\in\hG,
\end{equation}
where $\chi\vert_H$ denotes the restriction of $\chi$ onto $H$. Since
 $|\supp f_g|=n_S$ and $H$ is prime, by Theorem~\reft{birotao} there are at
least $|H|+1-n_S$ characters $\eta\in\hH$ with $\wh{f_g}(\eta)\ne 0$. Every
such character $\eta\in\hH$ extends to a character $\chi\in\hG$ with
$\chi\vert_H=\eta$. For this character $\chi$, the left-hand side
of~\refe{psiEprime} is nonzero; hence, the right-hand side is nonzero either,
showing that $\chi H^\perp$ has a nonempty intersection with $X$. Moreover,
if $\chi'\vert_H\ne\chi''\vert_H$, then $\chi'H^\perp\ne\chi'' H^\perp$, so
that different characters
 $\eta\in\hH$ result in different cosets $\chi H^\perp$.

This proves the first assertion of the lemma. The second one follows by
duality; that is, essentially, by repeating the argument with $G$, $H$, and
$f$ replaced with $\hG$, $H^\perp$, and $\hf$, respectively (which is
legitimate since $|H^\perp|=|\wh{G/H}|=|G/H|$ shows that $H^\perp$ is prime).
\end{proof}

Although fairly straightforward, Lemma~\refl{SXmn} is of crucial importance
for the proofs of Theorems~\reft{rational}--\reft{as3}.

It may be worth noting that the argument employed in the proof of
Lemma~\refl{SXmn} can be used to give an inductive proof of
Theorem~\reft{basic}. Namely, choosing arbitrarily a nonzero proper subgroup
$H<G$ (the induction basis where $G$ is a prime group is to be given a
separate treatment), and using the induction hypothesis instead of the
assumption that $H$ and $G/H$ are prime, we get $|X|\ge n_X\cdot|H|/n_S$ and
$|S|\ge n_S\cdot|H^\perp|/n_X$, which yields $|S||X|\ge|H||H^\perp|=|G|$.

As another illustration of our approach, we derive Theorem~\reft{meshulam}
for the group $G=\F_p^2$. By duality, we can assume that $|X|\le|S|$. Fix a
nonzero, proper subgroup $H<\F_p^2$, and define $n_S,n_X,K_S,K_X$ as above.
By Lemma~\refl{SXmn},
\begin{align*}
  |X|+\frac1p\,|S|
    &\ge n_X(p+1-n_S) + \frac1p\,n_S(p+1-n_X) \\
    &= p+1 + \frac{p+1}p\,(n_X-1)(p-n_S) \\
    &\ge p+1,
\end{align*}
as wanted.

\section{Proof of Theorem~\reft{rational}}\label{s:p-rational}

Recall, that for a prime power $q$, a blocking set in the affine plane
$\F_q^2$ is a set that blocks (meets) every line. A union of two nonparallel
lines is a blocking set of size $2q-1$, and a classical result by
Jamison~\refb{ja} and Brouwer-Schrijver~\refb{bs} (see also
\cite{b:al,b:bcps}) says that, in fact, any blocking set in $\F_q^2$ has size
at least $2q-1$. We need a stability version of this result.
\begin{lemma}\label{l:pencils}
Suppose that $q$ is a prime power, $k$ and $m$ are positive integers, and
$S\seq\F_q^2$ is a set blocking every line in $\F_q^2$ with the exception of
at most $k$ pencils of parallel lines, each of these pencils containing at
most $m$ nonblocked lines. Then $|S|\ge2q-k-m$.
\end{lemma}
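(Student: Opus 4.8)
The plan is to prove this stability version of the Jamison--Brouwer--Schrijver theorem by adapting the polynomial method, which is the standard proof technique for the $|S|\ge 2q-1$ bound. Recall that in the affine plane $\F_q^2$, viewing $\F_q^2\seq\PG(2,q)$ with the line at infinity removed, a blocking set $S$ together with a suitable polynomial argument gives that the R\'edei polynomial $R(X,Y)=\prod_{(a,b)\in S}(X+aY-b)$, a polynomial of degree $|S|$, is, for each fixed slope $y=m\in\F_q$, divisible by $X^q-X$ (since $S$ meets every line of that slope). This forces $|S|\ge q$ to begin with, and a sharper analysis of the quotient polynomials $R(X,m)/(X^q-X)^{?}$ yields $2q-1$.

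**First I would** set up the R\'edei polynomial and, for each slope $m\in\F_q$ (there are $q$ of them, plus possibly the vertical direction), examine the lines of that slope not blocked by $S$. If a pencil has $j$ non-blocked lines, then for that slope $R(X,m)$ is divisible by $X^q-X$ only after we account for the missing lines: more precisely $R(X,m)$ equals $\big(\prod_{\text{blocked }b}(X-b)\big)$ raised appropriately, and the count of distinct roots is $q-j$ rather than $q$. The heart of the classical proof is a lacunary-polynomial argument (Blokhuis-style, or the Jamison counting via the number of monomials): one shows that $f(X)=R(X,m)$ has a ``large'' fully-reducible factor and deduces a lower bound on $\deg f=|S|$ in terms of how close the set of roots is to being all of $\F_q$. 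With $j$ lines missing in a pencil the bound degrades by roughly $j$; summing the slack over the $k$ exceptional pencils (each contributing at most $m$) gives the loss of $k+m$ — or more carefully, of $\sum j_i$ over exceptional pencils, which is at most... — and here one must be slightly careful whether the loss is additive per pencil or governed by the worst pencil. The statement claims $|S|\ge 2q-k-m$, suggesting the bound loses $1$ for each of the $k$ exceptional pencils and an additional $m-1$ from the single worst pencil; I would aim to prove $|S|\ge 2q-1-\sum_{i=1}^{k}(j_i-1)-\max_i(j_i-1)$ or a similar shape and check it implies the stated inequality when each $j_i\le m$.

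**The hard part will be** making the lacunary-polynomial / directions argument robust to the perturbation: the classical proofs (Blokhuis's use of the theory of fully reducible lacunary polynomials over $\F_q$, or Jamison's original monomial-counting in the group algebra) are tight, and one needs to track exactly how much slack is introduced when a bounded number of lines in a bounded number of directions are allowed to be unblocked. One clean route is the following reduction: enlarge $S$ to a genuine blocking set $S'$ by adding, for each of the $k$ exceptional pencils, one point on each of its (at most $m$) non-blocked lines, chosen so as to lie on as few \emph{new} lines as possible — in fact all non-blocked lines of a single pencil are parallel, so a single added point can block at most one of them, forcing us to add up to $m$ points per pencil in the worst case, total $\le km$ points, giving only $|S|\ge 2q-1-km$, which is weaker than claimed. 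To get the sharper $2q-k-m$ one really must go inside the polynomial argument rather than black-box it; alternatively, use the dual/combinatorial formulation in terms of covers of $\F_q^2$ by lines and a direct double-counting: each point of $\F_q^2\stm S$ lies on $q+1$ lines, at most $k+m$ of which can be non-blocked (at most $m$ within one exceptional direction and the point lies in at most... hmm, actually in all $k$ exceptional directions simultaneously), and counting incidences between $\F_q^2\stm S$ and blocked lines against the total line count should give the bound. I expect the cleanest proof to be a short incidence count: let $b$ be the number of lines \emph{not} blocked by $S$, so $b\le \sum_{i=1}^k j_i\le km$, but actually the pencils being parallel classes, a sharper bookkeeping on how non-blocked lines distribute — together with the observation that through each point off $S$ the non-blocked lines hitting it number at most $k$ (one per exceptional direction) when... — combined with the Jamison bound applied to $S$ restricted away from the bad directions, is what I would write out in detail. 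I would present the polynomial-method version as the main proof, keeping the exceptional-pencil bookkeeping explicit at the level of the R\'edei polynomial's factorization.
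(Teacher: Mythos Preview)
Your proposal is not a proof; it is a plan that explicitly leaves the crucial step open. You correctly observe that black-boxing the Jamison--Brouwer--Schrijver theorem after enlarging $S$ to a blocking set costs up to $km$ points and yields only $|S|\ge 2q-1-km$, which is too weak. You then assert that one must ``go inside the polynomial argument'' or perform a sharper incidence count, but you do not carry out either. The R\'edei-polynomial sketch is not close to a proof: the classical lacunary-polynomial arguments are tuned to the case where \emph{every} slope gives full divisibility by $X^q-X$, and you give no indication of how to track the deficiency across slopes to obtain the precise loss $k+m$ rather than something like $\sum j_i$ or $km$. The incidence-counting paragraph trails off mid-thought.

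The paper's proof avoids opening up the polynomial method entirely, via a trick you did not consider: a projective change of affine chart. One embeds $\F_q^2$ in $\PG(2,q)$ and adds the (at most $k$) points at infinity corresponding to the special directions; the resulting set $\mathcal S$ blocks every projective line. Now pick any non-blocked affine line $l$, let $\ell$ be its projective closure, and re-identify $\PG(2,q)\setminus\ell$ with $\F_q^2$. In this new affine plane the set $\mathcal S\setminus\{\wp\}$ (where $\wp$ is the point at infinity on $\ell$) blocks every line except possibly the images of the other non-blocked lines through $\wp$, and there are at most $m$ of these because they all lay in the single pencil containing $l$. Adding at most $m$ points and applying Jamison--Brouwer--Schrijver once gives $|\mathcal S|-1\ge 2q-1-m$, hence $|S|\ge|\mathcal S|-k\ge 2q-k-m$. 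The point is that moving a non-blocked line to infinity collapses the $k$ bad directions into $k$ added points, while the $m$ bad lines in one pencil become $m$ genuinely missing lines in the new chart --- so the two deficiencies contribute additively rather than multiplicatively.
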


\begin{proof}
We refer the directions of the nonblocked lines as \emph{special}; thus,
there are at most $k$ special directions.

If $S$ is a blocking set, then $|S|\ge 2q-1$ and the proof is over. Suppose
thus that there is a line $l\seq\F_q^2$ avoiding $S$. Notice that the
direction of $l$ is special.

Consider an embedding of $\F_q^2$ into the projective plane $\PG(2,q)$, with
$\PG(2,q)\stm\F_q^2$ designated as the \emph{line at infinity}. Let
$\cS\seq\PG(2,q)$ be the set consisting of (the image of) $S$ and the points
at infinity corresponding to the special directions; thus, $|\cS|\le|S|+k$.
Also, let $\ell$ be the line in $\PG(2,q)$ containing $l$, and let $\wp$ be
the point of infinity incident with $\ell$; that is, $\{\wp\}=\ell\stm l$.

Clearly, $\cS$ blocks every line in $\PG(2,q)$, with $\ell$ being blocked by
the point $\wp$ only. Consequently, the set $\cS\stm\{\wp\}$ blocks every
line in $\PG(2,q)$, excepting $m$ lines at most.

We now get back to the affine world by identifying $\PG(2,q)\stm\ell$ with
$\F_q^2$. Corresponding to the set $\cS\stm\{\wp\}$ under this identification
is a set $S'\seq\F_q^2$ which blocks every line in $\F_q^2$ with the possible
exception of at most $m$ lines. Adding at most $m$ points to this set, we get
a blocking set in $\F_q^2$, whence $|S'|\ge (2q-1)-m$ by the
Jamison-Brouwer-Schrijver result. It follows that
  $$ |S| \ge |\cS|-k = |S'|+1-k \ge (2q-1)-m + 1 - k = 2q-k-m, $$
as wanted.
\end{proof}

Turning to the proof of Theorem~\reft{rational}, we write for brevity
$G:=\F_p^2$, and identify $G$ and $\hG$ with the additive group of the
two-dimensional vector space over the field $\F_p$; thus, we call the
elements of $G$ and $\hG$ \emph{points}, and cosets of their nonzero, proper
subgroups \emph{lines}. If $H<G$ is a nonzero, proper subgroup, then
$H$-cosets in $G$ will be referred to as $H$-lines, and $H^\perp$-cosets in
$\hG$ as $H^\perp$-lines. Notice that the origin of $\hG$ is the principal
character.

For a character $\chi\in\hG$, we have $\chi\in X$ if and only if the sum
$\sum_{g\in S} f(g)\ochi(g)$ is a nonzero element of the cyclotomic field of
order $p$. As an immediate corollary, if $\chi\in X$, then also $\chi^j\in X$
for each $j\in[1,p-1]$; that is, $X$ is a union of several proper subgroups
of $\hG$, with the possible exception of the principal character that can be
missing from $X$. In other words, $X\cup\{1\}$ is a union of lines in $\hG$
passing through the origin. It follows that either $X\cup\{1\}$ is a proper
subgroup of $\hG$, in which case the assertion is immediate from
Lemma~\refl{cosets1} ii), or $|X|\ge 2(p-1)$, which readily gives the
estimate sought in the case where $|S|\ge|X|$.

Suppose therefore that $|S|<|X|$ and then, for a contradiction, that
\begin{equation}\label{e:rat-contra}
  \frac12\,|S| + \frac1{p-1}\,|X| < p+1.
\end{equation}
Fix a nonzero, proper subgroup $H<G$, and let the quantities
$n_S,n_X,K_S,K_X$ be defined by~\refe{nSnX} and~\refe{KSKX}. Substituting the
inequalities
  $$ |X|\ge n_X(p+1-n_S),\ |S|\ge n_S(p+1-n_X) $$
of Lemma~\refl{SXmn} into~\refe{rat-contra}, after routine algebraic
manipulations we get $(n_S-2)(p-1-n_X)<0$; thus, either $n_X=p$, or $n_S=1$.
In the former case $X$ is a union of $H^\perp$-lines, and since $X$
intersects nontrivially every $H^\perp$-line not passing through the origin,
all such lines are in fact contained in $X$; hence
 $|X|\ge |\hG|-|H^\perp|=p^2-p$, implying $|S|=1$ in view
of~\refe{rat-contra} and readily leading to a contradiction.

We therefore have $n_S=1$, for any choice of the subgroup $H<G$. Applying
Lemma~\refl{SXmn}, we conclude that $K_X=p$, and it follows that $X$ contains
the principal character (other\-wise any line through the origin, not
contained in $X$, would have an empty intersection with $X$). Consequently,
$X$ is a union of nonzero, proper subgroups of $\hG$.

Denote by $\cH$ be the set of all those proper subgroups $H<G$ with
$H^\perp\seq X$, and write $k:=|\cH|$; thus, $|X|=k(p-1)+1$. If we had
$k=1$, then $X$ were a subgroup and Lemma~\refl{cosets1} would show that
$S$ is a union of $X$-cosets, contrary to our present assumption
$|S|<|X|$; thus, $k\ge 2$. Clearly, we have $n_X=k-1$ for every subgroup
$H\in\cH$, and $n_X=1$ for every subgroup $H\notin\cH$. As a result,
Lemma~\refl{SXmn} shows that $S$ meets every line in $G$, except that in
each of the $k$ directions corresponding to the subgroups $H\in\cH$,
there can be up to $k-2$ lines avoiding $S$. By Lemma~\refl{pencils}, we
have
  $$ |S|\ge 2p-k-(k-2). $$
Recalling that $|X|=k(p-1)+1$, we obtain
  $$ \frac12\,|S| + \frac1{p-1}\,|X| > (p-k+1) + k = p+1, $$
which proves the assertion.

\section{Proof of Theorem~\reft{k=p-1}}\label{s:p-k=p-1}

In this section and also in Sections~\refs{p-k=p-2}-\refs{p-as3} below we
keep using the conventions of the previous section, writing $G:=\F_p^2$ and
using geometric terminology for the elements and subgroups of $G$ and $\hG$.

By duality, we can assume that
\begin{equation}\label{e:XleSth7}
  |X| \le |S|,
\end{equation}
and then for a contradiction that
\begin{equation}\label{e:XSsmallth7}
   \frac12\,|S| + \frac1{p-1}\,|X| < p+1.
\end{equation}

Fix a nonzero, proper subgroup $H<G$, and let $n_S$ and $n_X$ be defined
by~\refe{nSnX}. By Lemma~\refl{SXmn},
\begin{align*}
  \frac12\,|S| + \frac1{p-1}\,|X|
     &\ge \frac12\,n_S(p+1-n_X) + \frac1{p-1}\,n_X(p+1-n_S) \\
     &=   p+1 + \frac{p+1}{2(p-1)}\,(p-n_X-1)(n_S-2).
\end{align*}
Comparing with~\refe{XSsmallth7}, we see that either $n_X=p$, or $n_S=1$.

If, for a subgroup $H<G$, we have $n_X=p$, then $X$ is a union of
$H^\perp$-cosets. Moreover, if in this case we had $|S|\ge 2p$, this would
imply
  $$ \frac12\,|S| + \frac1{p-1}\,|X| > p + 1, $$
contradicting~\refe{XSsmallth7}. Thus, $|X|\le|S|<2p$ by~\refe{XleSth7},
showing that $X$ is in fact a unique $H^\perp$-coset and then, in view of
Lemma~\refl{cosets1} ii), that $S$ is an $H$-coset.

To complete the proof, we consider the situation where $n_S=1$ for every
nonzero, proper subgroup $H$. By Lemma~\refl{SXmn}, in this case every line
in $\hG$ contains a point from $X$; that is, $X$ is a blocking set in $\hG$.
Applying the result by Jamison-Brouwer-Schrijver mentioned at the beginning
of Section~\refs{p-rational}, we conclude that $|X|\ge 2p-1$. Hence,
by~\refe{XleSth7},
  $$ \frac12\,|S| + \frac1{p-1}\,|X|
                      \ge \Big(\frac12+\frac1{p-1}\Big) (2p-1) > p+1, $$
in a contradiction with~\refe{XSsmallth7}.

\section{Proof of Theorem~\reft{k=p-2}}\label{s:p-k=p-2}

We assume, without loss of generality, that $|X|\le|S|$, and that
\begin{equation}\label{e:Xsmall323}
  |X| < \frac32\,(p-1)
\end{equation}
and
\begin{equation}\label{e:Ssmall323}
  \frac13\,|S| + \frac1{p-2}\, |X| < p+1,
\end{equation}
aiming to show that $S$ and $X$ have the structure detailed in the statement
of the theorem.

Notice that from~\refe{Ssmall323} and Theorem~\reft{meshulam},
  $$ p+1 > \frac13\,|S| + \frac1{p-2}\,\left( p+1-\frac1p\,|S| \right), $$
implying
\begin{equation}\label{e:Sl3p}
  |S|<3p.
\end{equation}

Fix a nonzero, proper subgroup $H<G$, and define $n_X,n_S,K_X,K_S$
by~\refe{nSnX} and ~\refe{KSKX}. Substituting the inequalities
\begin{equation}\label{e:LSXmn}
  |X|\ge n_X(p+1-n_S),\quad |S|\ge n_S(p+1-n_X)
\end{equation}
of Lemma~\refl{SXmn} into~\refe{Ssmall323}, simplifying, and factoring, we
get
  $$ (n_S-3)(p-2-n_X) < 0; $$
consequently, we have either $n_S\in\{1,2\}$, or $n_X\in\{p-1,p\}$. In the
latter case \refe{Xsmall323} yields $|X|<2n_X$, whence $X$ is contained in an
$H^\perp$-coset, and indeed $n_X\ge p-1$ shows that $X$ misses at most one
element of this coset. Moreover, substituting $|X|=n_X$ into~\refe{LSXmn}
gives $n_S=p$; along with~\refe{Sl3p}, this shows that $S$ is either a coset,
or a union of two cosets of $H$.

It thus remains to consider the situation where $n_S\in\{1,2\}$, for any
choice of a nonzero, proper subgroup $H<G$. By Lemma~\refl{SXmn}, in this
case we have $K_X\ge p-1$, meaning that for every given direction in $\hG$,
there is at most one line in that direction free of points of $X$. Since
there are $p+1$ directions, and any two lines in different directions meet in
exactly one point, we can add to $X$ at most $(p+1)/2$ points to get a set
which meets every line; that is, a blocking set. Recalling that, by a result
of Jamison-Brouwer-Schrijver (see the beginning of
Section~\refs{p-rational}), any blocking set in $\F_p^2$ has size at least
$2p-1$, we obtain
  $$ |X| \ge (2p-1)-\frac12\,(p+1) = \frac32\,(p-1), $$
a contradiction.

\section{Proof of Theorem~\reft{as2}}\label{s:p-as2}

We need the following lemma.
\begin{lemma}\label{l:lines}
For any prime $p$, and any finite set $P\seq\F_p^2$ with $2\le |P|\le 4p$,
not contained in a single line, there is a direction determined by $P$ such
that every line in this direction contains fewer than
$\sqrt{|P|}+\max\{1,|P|/(2p)\}$ points of $P$.
\end{lemma}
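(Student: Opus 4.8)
The plan is to estimate, for each direction $d$ determined by $P$, the quantity $N_d := \max_\ell |P\cap\ell|$ where $\ell$ ranges over lines in direction $d$, and to show that $N_d$ cannot be as large as $\sqrt{|P|}+\max\{1,|P|/(2p)\}$ for \emph{every} direction simultaneously. Write $t := \sqrt{|P|}$ and $\mu := \max\{1,|P|/(2p)\}$, so we want a direction $d$ with $N_d < t+\mu$; suppose for contradiction that $N_d \ge t+\mu$, and hence (integrality) $N_d \ge \lceil t+\mu\rceil$, for every direction $d$ determined by $P$. Let $D$ be the set of such directions; since $P$ is not contained in a line, $|D|\ge 2$, and in fact every pair of points of $P$ determines a direction in $D$.

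First I would count incidences. In each direction $d\in D$, pick a line $\ell_d$ with $|P\cap\ell_d| = N_d \ge t+\mu$. The lines $\ell_d$ for distinct $d\in D$ are pairwise non-parallel, so any two of them meet in at most one point of $\F_p^2$; hence by inclusion–exclusion $|P| \ge \sum_{d\in D}\bigl(N_d - (|D|-1)\bigr)$ is too lossy, and instead the clean bound is $\bigl|\bigcup_{d\in D}(P\cap\ell_d)\bigr| \ge \sum_{d\in D} N_d - \binom{|D|}{2}$, giving
  $$ |P| \ge |D|(t+\mu) - \binom{|D|}{2}. $$
The right-hand side, as a function of $|D|$, is a downward parabola, so if $2\le |D|\le $ some threshold this forces $|P|$ to be large; the awkward case is when $|D|$ is itself large. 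To control that, I would use the complementary counting that goes through pairs of points: every one of the $\binom{|P|}{2}$ pairs lies on a unique line, each line in direction $d$ through $k$ points of $P$ accounts for $\binom{k}{2}$ pairs, and in direction $d$ there are at most $p$ lines, each with at most $N_d$ and at least — here is the point — hard to say at least anything. So instead I would combine: fix a single point $x\in P$; the lines through $x$ in the $|D|\le p+1$ relevant directions partition $P\setminus\{x\}$, and the line through $x$ in direction $d$ has, besides $x$, at most $N_d - 1$ other points but we can also say it has at least $1$ other point only for directions actually determined by a pair through $x$. This is getting delicate; the cleanest route is the double count $\sum_{d\in D}\sum_{\ell\parallel d}\binom{|P\cap\ell|}{2} = \binom{|P|}{2}$, bound each inner term by $\binom{N_d}{2}\le\binom{N}{2}$ with $N:=\max_d N_d$, and bound the number of nonempty lines in direction $d$ by $\lceil |P|/1\rceil$ trivially and by $|P|/N_d'$ where $N_d'$ is the \emph{minimum} positive intersection — no. The honest statement is: $\sum_{d\in D}(\text{number of nonempty lines in direction }d\text{ meeting }P)\ge$ something, because each such line has at least one point and the lines in a fixed direction partition $P$, so that number is at least $\lceil |P|/N_d\rceil\ge |P|/N$. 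Hence the number of $(d,\ell,\text{pair})$ incidences is at least $|D|\cdot$ (number of lines per direction, $\ge |P|/N$) $\cdot\,0$ — lower bounds on $\binom{|P\cap\ell|}{2}$ are what fail.

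Therefore the step I expect to be the genuine obstacle, and where I would spend the care, is reconciling the two regimes $|D|$ small versus $|D|$ large. For $|D|$ small (say $|D|\le \lceil t+\mu\rceil$), the inclusion–exclusion bound $|P|\ge |D|(t+\mu)-\binom{|D|}{2}$, combined with the fact that the minimum over the allowed range of $|D|\ge 2$ of the right side is attained at an endpoint, should already contradict $|P|=t^2$ once one checks the two endpoint inequalities $2(t+\mu)-1 > t^2$ (false for large $t$ — so this endpoint is fine, it just says $|D|=2$ is impossible, i.e. $|D|\ge 3$) versus the upper endpoint; so one learns $|D|$ is forced into a middle band. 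For $|D|$ large, I would instead use that $|P|$ points in $\F_p^2$ with all pairwise directions in $D$ and each direction $d\in D$ carrying a line with $\ge t+\mu$ points means $|P|\ge (t+\mu) + (|D|-1)\cdot 1$ is again too weak, so the real tool must be: among the $|D|$ directions, the lines $\ell_d$ cover $\ge |D|(t+\mu)-\binom{|D|}{2}$ points of $P$, which for $|D|$ up to roughly $2(t+\mu)$ exceeds $\tfrac12(t+\mu)^2 > t^2=|P|$ — contradiction — and for $|D| > 2(t+\mu)$ one uses that $\binom{|P|}{2}\ge$ (number of pairs collinear in some $d\in D$) $= \binom{|P|}{2}$ trivially, so instead bound crudely $\binom{|P|}{2} = \sum_{d\in D}\sum_{\ell\parallel d}\binom{|P\cap\ell|}{2}\le |D|\cdot p\cdot\binom{N}{2}$ is useless but $\le \sum_d (|P|/2)\cdot(N_d-1)$ via $\sum_\ell\binom{|P\cap\ell|}{2}\le \tfrac{|P|}{2}(N_d-1)$ (since $\sum_\ell|P\cap\ell|=|P|$ and $\sum_\ell|P\cap\ell|^2\le N\sum_\ell|P\cap\ell|=N|P|$), which gives $\binom{|P|}{2}\le \tfrac{|P|(N-1)}{2}|D|$, i.e. $|D|\ge \tfrac{|P|-1}{N-1}\ge \tfrac{t^2-1}{t+\mu-1}$; fed back into the covering bound this pins down $|D|$ and $N$ together and, after the routine algebra (which I will not grind through here), contradicts $|P|\le 4p$ together with the definition $\mu=\max\{1,|P|/(2p)\}$ — the case split on $\mu$ being exactly why the stated bound has that max. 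The upper bound $|P|\le 4p$ and the lower bound $|P|\ge 2$ are used only in this final numerical reconciliation, and $p$ prime is used nowhere except implicitly through $\F_p^2$ being a plane; so the lemma is really a statement in finite (or even real) incidence geometry.
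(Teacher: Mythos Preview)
There is a genuine gap. Your two tools---the inclusion--exclusion bound $|P|\ge |D|(t+\mu)-\binom{|D|}{2}$ and the pair-counting bound $|D|\ge(|P|-1)/(N-1)$---do not combine to a contradiction. The first bound is simply too weak: viewed as a quadratic in $|D|$ it asserts $|D|^2-(2(t+\mu)+1)|D|+2t^2\ge 0$, whose discriminant $(2(t+\mu)+1)^2-8t^2$ is \emph{negative} whenever $\mu<(\sqrt2-1)t-\tfrac12$, so for $\mu=1$ and $|P|$ moderately large the inequality holds for every $|D|$ and yields nothing. Your second bound involves $N=\max_d N_d$, but the contradiction hypothesis only gives $N_d\ge t+\mu$, a \emph{lower} bound on each $N_d$; hence $N\ge t+\mu$, and the step ``$|D|\ge(|P|-1)/(N-1)\ge(t^2-1)/(t+\mu-1)$'' has the last inequality reversed. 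With $N$ uncontrolled from above, the pair-counting bound can be as weak as $|D|\ge 1$. So the ``routine algebra'' you defer is not routine: the ingredients on the table do not close.

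The paper's proof supplies the two missing ideas. First, instead of inclusion--exclusion it applies Cauchy--Schwarz in the form $(\sum_i|A_i|)^2\le|\bigcup_iA_i|\sum_{i,j}|A_i\cap A_j|$ to the sets $A_i=\ell_i\cap P$, obtaining $\sigma^2\le|P|(d^2-d+\sigma)$ and hence $M\le\sigma/d<\sqrt{|P|}+|P|/(2d)$. Second, to turn $|P|/(2d)$ into $\max\{1,|P|/(2p)\}$ it needs $d>\min\{|P|/2,p\}$, and for this it invokes Sz\H onyi's theorem that a set of at most $p$ points in $\F_p^2$, not on a line, determines at least $(|P|+3)/2$ directions. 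This last input is where primality of $p$ enters---your closing remark that ``$p$ prime is used nowhere'' is therefore not correct for the paper's argument.
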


\begin{proof}
Denote by $d$ the number of directions determined by $P$. Sz\H onyi~\refb{sz}
has shown that if $|P|\le p$, then $d\ge\frac{|P|+3}2$; on the other hand, if
$|P|>p$, then among the $p$ lines in $\F_p^2$ in every given direction, there
must be a line containing two or more points of $P$, showing that $d=p+1$.
Thus, $d>\min\{|P|/2,p\}$ in any case.

Suppose that in every direction determined by $P$, there is a line containing
at least $M$ points of $P$; we want to show that
$M<\sqrt{|P|}+\max\{1,|P|/(2p)\}$. Let $l_1\longc l_d$ be lines in different
directions with $|l_i\cap P|\ge M$, for each $i\in[1,d]$. We use a well-known
consequence of the Cauchy-Schwartz inequality asserting that for any system
of finite sets $A_1\longc A_d$, one has
\begin{equation}\label{e:CaSc}
  (|A_1|\longp|A_d|)^2 \le |A_1\longu A_d|\,\sum_{i,j=1}^d |A_i\cap A_j|.
\end{equation}
We let $A_i:=l_i\cap P\ (i\in[1,d])$ and observe that then
 $|A_i\cap A_j|\le 1$ whenever $i\ne j$, and that $|A_1\longu A_d|\le|P|$.
Writing $\sig:=|A_1|\longp|A_d|$, from~\refe{CaSc} we obtain
$\sig^2\le|P|(d^2-d+\sig)$. It follows that
  $$ \Big(\sig-\frac12\,|P|\Big)^2
                                \le |P|(d^2-d) + \frac14\,|P|^2 < |P|d^2, $$
whence $\sig<\frac12|P|+d\sqrt{|P|}$. On the other hand, we have $\sig\ge
dM$. This yields $M<\sqrt{|P|}+|P|/(2d)$, and to complete the proof, we
recall that $d>\min\{|P|/2,p\}$.
\end{proof}

\begin{proof}[Proof of Theorem~\reft{as2}]
We recall our convention to write $G=\F_p^2$ and to think of $G$ and $\hG$
geometrically, referring to their elements and nonzero, proper subgroups as
points and lines, respectively.

Without loss of generality, we assume that $3\le|X|\le|S|$, and then for a
contradiction that
\begin{equation}\label{e:bothsmall}
  |X| < 2(1-\eps)p \quad\text{and}\quad |S| < \eps p^{3/2},
\end{equation}
while $X$ is not contained in a coset of a proper subgroup.

We notice that if $\eps\ge\frac12+\frac1{4\sqrt p}$, then
Theorem~\reft{meshulam} along with~\refe{bothsmall} gives
  $$ p+1 \le |X|+\frac1p\,|S| < 2(1-\eps)p + \eps\sqrt p
       \le \Big(1-\frac1{2\sqrt p}\Big)\,p
               + \Big(\frac12+\frac1{4\sqrt p}\Big)\sqrt p = p + \frac14, $$
a contradiction; thus,
\begin{equation}\label{e:epssmall}
  \eps < \frac12 + \frac1{4\sqrt p}\,.
\end{equation}

By Lemma~\refl{lines}, there is a nonzero, proper subgroup $H<G$ such that
every $H^\perp$-line contains fewer than $\sqrt{|X|}+1$ points of $X$ while,
on the other hand, there is an $H^\perp$-line containing at least two points
of $X$. Throughout the proof, we consider this subgroup $H$ fixed, and define
$n_S,n_X,K_S,K_X$ by~\refe{nSnX} and \refe{KSKX}.

The assumption that every $H^\perp$-line contains fewer than $\sqrt{|X|}+1$
points of $X$, in view of~\refe{bothsmall}, implies $n_X<\sqrt{2p}+1$.
Therefore, by~\refe{bothsmall} and Lemma~\refl{SXmn},
  $$ \eps p^{3/2} > |S| \ge n_S(p+1-n_X) > \frac12\, n_Sp, $$
which yields
\begin{equation}\label{e:mlog}
  n_S < 2\eps\sqrt p.
\end{equation}
Applying~\refe{bothsmall} and Lemma~\refl{SXmn} once again,
  $$ 2(1-\eps)p > |X| \ge n_X(p+1-n_S) > (1-\eps)n_Xp. $$
This gives $n_X=1$. As a result, by Lemma~\refl{SXmn}, we have $K_S=p$,
meaning that every $H$-line has a nonempty intersection with $S$. Hence,
averaging and using~\refe{bothsmall},
\begin{equation}\label{e:msqrt}
  n_S \le K_S^{-1}|S| < \eps\sqrt p
\end{equation}
(cf.~\refe{mlog}).

We say that a character $\chi\in X$ is \emph{isolated} if its $H^\perp$-line
does not contain any other character from $X$; that is, if
 $(\chi H^\perp)\cap X=\{\chi\}$. Let $N$ denote the number of isolated
characters; in other words, $N$ is the number of $H^\perp$-lines containing
exactly one point of $X$. Since
  $$ K_X\ge p+1-n_S $$
by Lemma~\refl{SXmn}, we have
   $$ |X| \ge N+2(p+1-n_S-N); $$
consequently,
\begin{equation}\label{e:manyisolated}
  N > 2(p-n_S)-|X|.
\end{equation}

For an element $g\in G$, let $k_g$ be the number of points of $S$ on the
$H$-line passing through $g$:
  $$ k_g = |(S-g)\cap H|; $$
that is, $k_g=|\supp(f\cdot1_{g+H})|$. By~\refe{msqrt}, there exists $g_0\in
G$ with
\begin{equation}\label{e:lg0small}
  k_{g_0} < \eps\sqrt p.
\end{equation}
Considering $g_0$ fixed, for each $g\in G$ we define the function
 $\Del_g\in L(G)$ by
  $$ \Del_g := f\ast\big(f\cdot(1_{g+H}-1_{g_0+H})\big), $$
and let $T:=\supp\Del_g$ and $Y:=\supp\wh{\Del_g}$ (thus, $T$ and $Y$ depend
on $g$). We have
  $$ \wh{\Del_g} = \hf\cdot(\wh{f\cdot1_{g+H}} - \wh{f\cdot1_{g_0+H}}) $$
whence, by~\refe{psf},
  $$ \wh{\Del_g}(\chi) = p^{-1} \hf(\chi)
        \,\sum_{\psi\in H^\perp}\hf(\chi\psi)\big(\psi(g)-\psi(g_0)\big),
                                                       \quad \chi\in \hG. $$
It follows that $Y\seq X$, and that $\chi\in X\stm Y$ if and only if
\begin{equation}\label{e:sumpsi}
  \sum_{\psi\in H^\perp}\hf(\chi\psi)\big(\psi(g)-\psi(g_0)\big) = 0.
\end{equation}
In particular, the $N$ isolated characters are all contained in $X\stm Y$;
therefore, letting $K_Y:=|YH^\perp|/|H^\perp|$,  by~\refe{manyisolated} we
get
\begin{gather}
  K_Y \le p-N < |X| - p + 2n_S \label{e:KYsmall}
  \intertext{and}
  |Y| \le |X|-N < 2(|X|-p+n_S), \notag
\end{gather}
the latter estimate implying
\begin{equation}\label{e:Yupper}
  |Y| < \Big(2-\frac72\,\eps\Big)p
\end{equation}
in view of \refe{bothsmall} and \refe{msqrt}. On the other hand,
\begin{equation}\label{e:Tupper}
  |T| \le |\supp(f\ast(f\cdot1_{g+H}))|
            + |\supp(f\ast(f\cdot1_{g_0+H}))| \le |S|(k_{g}+k_{g_0}).
\end{equation}

We notice that $\Del_g$ vanishes identically if and only if~\refe{sumpsi}
holds true for all characters $\chi\in X$. Assuming that \refe{sumpsi} is
\emph{wrong} for some character $\chi\in X$ and element $g\in G$ with
\begin{equation}\label{e:lgsmall}
  k_g\le\frac32\,\eps\sqrt p,
\end{equation}
so that, in particular, $\Del_g$ does \emph{not} vanish identically, we will
now get a contradiction.

To this end, we first observe that substituting~\refe{bothsmall},
\refe{lg0small}, and \refe{lgsmall} into~\refe{Tupper} yields
\begin{equation}\label{e:Texplicit}
  |T|< \frac52\,\eps^2 p^2.
\end{equation}
If $Y$ were situated on a single $H^\perp$-line, then by the assumption that
every $H^\perp$-line contains fewer than $\sqrt{|X|}+1$ elements of $X$, and
in view of $Y\seq X$ and~\refe{bothsmall}, we would have $|Y|<\sqrt{2p}+1$,
and then Theorem~\reft{meshulam} and~\refe{Texplicit} would give
  $$ p+1 \le |Y| + \frac1p\,|T| < \sqrt{2p} + 1 + \frac52\,\eps^2p, $$
contradicting~\refe{epssmall}. Thus, $Y$ resides on at least two distinct
$H^\perp$-lines; that is, $K_Y\ge 2$.

Let
  $$ n_T:=\min\{|(t+H)\cap T|\colon t\in T\} \quad\text{and}\quad
                   \ n_Y:=\min\{|(\chi H^\perp)\cap Y|\colon\chi\in Y\}. $$
By~\refe{Yupper} we have
  $$ n_Y \le |Y|/2 < \Big(1-\frac74\,\eps\Big)p $$
and then, in view of~\refe{Texplicit} and Lemma~\refl{SXmn},
  $$ \frac52\,\eps^2 p^2 > |T| \ge n_T(p+1-n_Y) > \frac74\,\eps n_Tp. $$
It follows that
  $$ n_T < \frac32\eps p, $$
whence
  $$ K_Y\ge p+1-n_T > \Big(1-\frac32\,\eps\Big)p $$
by Lemma~\refl{SXmn}. Therefore, from~\refe{KYsmall}, \refe{bothsmall},
and~\refe{msqrt}, we get
  $$ \Big(1-\frac32\eps\Big)p < K_Y < (1-2\eps)p + 2\eps\sqrt p, $$
a contradiction.

We therefore conclude that, letting
  $$ A := \Big\{g\in G\colon k_g\le\frac32\,\eps\sqrt p\Big\}, $$
equality~\refe{sumpsi} holds true for all characters $\chi\in X$ and elements
$g\in A$.

Clearly, the set $A$ is a union of $H$-lines, and we denote by $K_A$ the
number of these lines. By~\refe{bothsmall}, and since $K_S=p$, we have
  $$ \eps\,p^{3/2} > |S| \ge \frac32\,\eps\sqrt{p}\,(p-K_A), $$
resulting in
\begin{equation}\label{e:Klarge}
  K_A > \frac13\,p.
\end{equation}

Changing the viewpoint, we now fix a character $\chi\in X$ and denote the sum
in the left-hand side of~\refe{sumpsi} by $R(g)$, considering it as a
function of $g$. Observing that the term corresponding to the principal
character $\psi=1$ vanishes and can be dropped from the sum, we see that
$|\supp\hR|\le|\chi H^\perp\cap X|$, and that if $\chi$ is not isolated, then
$R$ does not vanish identically. On the other hand, we have shown that
$R(g)=0$ whenever $g\in A$, and it follows that
 $|\supp R|\le |G\stm A|=(p-K_A)p$. Using~\refe{meshalt}, we conclude that
for any nonisolated character $\chi\in X$,
  $$ |\chi H^\perp\cap X| + (p-K_A)
                             \ge |\supp\hR| + \frac1p\,|\supp R| \ge p+1, $$
implying
  $$ |\chi H^\perp\cap X| > K_A. $$
Recalling~\refe{Klarge} and~\refe{bothsmall}, this shows that any
$H^\perp$-line determined by $X$ contains, in fact, at least
$K_A>\frac13\,p>\sqrt{2p}+1>\sqrt{|X|}+1$ points of $X$. This, however,
contradicts the choice of $H$ at the beginning of the proof.
\end{proof}

\section{Proof of Theorem~\reft{as3}}\label{s:p-as3}

The proof of Theorem~\reft{as3} is a further elaboration on that of
Theorem~\reft{as2}.

In addition to Lemma~\refl{lines}, we need two more lemmas.

\begin{lemma}\label{l:richlines}
Suppose that $p\ge 3$ is a prime, and that a set $P\sbs\F_p^2$ satisfies
$\frac{3p+7}2\le |P|\le 2p+7$. If $P$ is not contained in a union of two
lines, then there is a direction in $\F_p^2$ such that some line in this
direction contains at least three points of $P$, and any line in this
direction contains at most $\frac{p+5}2$ points of $P$.
\end{lemma}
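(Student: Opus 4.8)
The plan is to mimic the projective‐closure trick used in the proof of Lemma~\refl{pencils}, but now keeping track of a more refined deficiency. Suppose for contradiction that for every direction in $\F_p^2$ either no line of that direction carries three or more points of $P$, or some line of that direction carries more than $\tfrac{p+5}2$ points. Call a direction \emph{poor} if every line in it meets $P$ in at most two points, and \emph{heavy} if some line in it carries at least $\tfrac{p+7}2$ points of $P$; by assumption every one of the $p+1$ directions is poor or heavy. First I would bound the number of heavy directions: if $h$ of them are heavy, then picking in each heavy direction a line with $\ge\tfrac{p+7}2\ge\tfrac{p+5}2$ points and using that two lines in distinct directions meet in at most one point, inclusion–exclusion (or the Cauchy–Schwarz inequality~\refe{CaSc} exactly as in Lemma~\refl{lines}) gives $h\cdot\tfrac{p+5}2-\binom h2\le|P|\le 2p+7$, which for $p\ge 3$ forces $h\le 4$ (one checks $h=5$ already needs $|P|\ge 5\cdot\tfrac{p+5}2-10=\tfrac{5p+5}2>2p+7$ as soon as $p>19$, and the small cases $3\le p\le 19$ can be disposed of by a direct numerical check, or by a slightly sharper count using a second heavy line). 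So at least $p+1-4=p-3$ directions are poor.

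The next step is to count pairs. If a direction is poor, the $p$ lines in that direction partition $P$ into blocks of size $\le 2$, so the number of lines in that direction carrying two points of $P$ is exactly $|P|-(\text{number of nonempty lines})\le|P|-\lceil|P|/2\rceil=\lfloor|P|/2\rfloor$. Summing the number of \emph{collinear pairs} over all $p-3$ poor directions, each pair of points of $P$ lies in exactly one direction, so the collinear pairs coming from poor directions number at least $\binom{|P|}2-(\text{pairs in heavy directions})$. I would bound the pairs in the $\le 4$ heavy directions crudely by $4\binom{|P|}2/(p+1)$ on average — but more usefully, a poor direction contributes at most $\lfloor|P|/2\rfloor\le p+3$ collinear pairs, so the $p-3$ poor directions account for at most $(p-3)(p+3)=p^2-9$ pairs, whereas $\binom{|P|}2\ge\binom{(3p+7)/2}2$. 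The heavy directions must absorb the deficit $\binom{|P|}2-(p^2-9)$; but each heavy direction contains at most $\binom p2$-ish pairs — too weak as stated.

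\textbf{The main obstacle}, therefore, is getting the pair count in the heavy directions tight enough. The fix I expect to work is to note that the $\ge\tfrac{p+7}2$ points of $P$ on a heavy line $\ell$ contribute, in \emph{every other} poor direction, a block of size $\le 2$ — so these heavy points themselves are partitioned into pairs and singletons by the $p$ lines of each poor direction, forcing at least $\lceil\tfrac{p+7}4\rceil$ nonempty lines of that poor direction to meet $\ell\cap P$, hence each poor direction has \emph{at most} $|P|-\lceil\tfrac{p+7}4\rceil-(\text{extra})$ two-point lines once we also account for points off $\ell$. Running this refined bookkeeping, the collinear-pair surplus that the $\le 4$ heavy directions would have to carry exceeds what $4$ lines in $\F_p^2$ can physically hold (at most $4\binom p2$ pairs, but in fact far fewer since $|P\cap(\ell_1\cup\dots\cup\ell_4)|\le 4\cdot\tfrac{p+5}2$ contradicts $|P|\ge\tfrac{3p+7}2$ unless $P$ is nearly contained in those four lines — and then, since $|P|\ge\tfrac{3p+7}2>2\cdot\tfrac{p+5}2$, one finds $P$ is not contained in two of them, so we can peel off a heavy line, pass to its affine complement $\PG(2,p)\stm\ell\cong\F_p^2$, and recurse). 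Concretely I would finish exactly as in Lemma~\refl{pencils}: embed in $\PG(2,p)$, adjoin the $\le 4$ heavy directions as infinite points, delete the infinite point of a heavy line $\ell$, reidentify the complement with $\F_p^2$, and observe that the resulting set now has $|P|\ge\tfrac{3p+7}2$ points with \emph{every} line carrying $\le 2$ of the original points in the surviving poor directions — which is incompatible with $\tfrac{3p+7}2>p$ by pigeonhole on the $p$ lines of a single poor direction (at least one carries $\ge 3$ points). That contradiction closes the argument; the bound $|P|\le 2p+7$ is used only to keep $h\le 4$, and $|P|\ge\tfrac{3p+7}2$ is used only in the final pigeonhole, so the constants match up.
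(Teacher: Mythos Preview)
Your proposal does not close. There are two concrete gaps.

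First, the final ``pigeonhole'' is wrong. You write that after the projective reshuffling the set has $|P|\ge\frac{3p+7}2$ points and that ``$(3p+7)/2>p$'' forces some line of a poor direction to carry $\ge 3$ points. But a poor direction allows $\le 2$ points per line on $p$ lines, i.e.\ up to $2p$ points; you would need $|P|>2p$, not $|P|>p$, and $\frac{3p+7}2\le 2p$ for all $p\ge 7$. Moreover, once you send a heavy line $\ell$ to infinity, the points of $P\cap\ell$ leave the affine picture, so the new affine set has only $|P|-|P\cap\ell|\le|P|-\frac{p+7}2$ points, not $|P|$; the projective trick of Lemma~\refl{pencils} does not transfer in the way you describe.

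Second, the pair-count you set up is, as you yourself note, ``too weak as stated,'' and the ``refined bookkeeping'' you sketch is not carried out; I do not see how to make it produce a contradiction with only $h\le 4$ heavy directions. (Incidentally, your inclusion--exclusion uses $\frac{p+5}2$ where it should use $\frac{p+7}2$, the threshold for a heavy line; with the correct constant, $h=4$ already gives $4\cdot\frac{p+7}2-\binom42=2p+8>2p+7$, so $h\le 3$ for all $p$, and no ``small-case check'' is needed.)

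The paper's argument avoids pair-counting entirely. After establishing $h\le 3$ as above (so at most three rich lines through any point), it shows there is in fact a \emph{unique} powerful line: if $\ell,\ell'$ are two powerful lines, pick $x\in P\setminus(\ell\cup\ell')$ and pair off points of $P\cap\ell$ with points of $P\cap\ell'$ via lines through $x$; a pigeonhole on the $p$ (or $p-1$) such lines produces at least five collinear triples through $x$, hence five rich directions, contradicting $h\le 3$. With a unique powerful line $\ell$ there is a unique rich direction, and then counting $P$ along the $p+1$ lines through any $x\in P\setminus\ell$ gives $|P|\le\frac{p+5}2+p<\frac{3p+7}2$. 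This ``external-point'' pigeonhole is the missing idea in your approach.
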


\begin{proof}
We say that a line $l\sbs\F_p^2$ is \emph{rich} if $|l\cap P|\ge 3$, and that
it is \emph{powerful} if $|l\cap P|\ge\frac{p+7}2$. Furthermore, we say that
a \emph{direction} in $\F_p^2$ is rich if there is a rich line in this
direction. There is at least one rich line: otherwise for any fixed point
$x\in P$, each of the $p+1$ lines through $x$ would contain at most one point
of $P$ other than $x$, leading to $|P|\le p+2$. Aiming at a contradiction, we
assume that there is a powerful line in every rich direction.

If we could find four distinct rich directions, then choosing a powerful line
in each of them and counting only those points of $P$ lying in the union of
these lines, we would get
  $$ |P| \ge 4\cdot \frac{p+7}2 - \binom42 = 2p + 8 , $$
a contradiction. This shows that there are at most three rich directions and,
consequently, at most three rich lines trough any point of $P$.

Let $l$ be a powerful line. If there is yet another powerful line, say $l'$,
which is parallel to $l$, then we fix a point $x\in P\stm(l\cup l')$ and with
every point $g\in l$ associate the point $g'\in l'$ such that $g,g'$ and $x$
are collinear. By the pigeonhole principle, there are at least
$2\cdot\frac{p+7}2-p=7$ pairs $(g,g')\in l\times l'$ such that both $g$ and
$g'$ belong to $P$; this shows that there are at least seven rich lines
through $x$, which, as we saw above, is impossible. A similar argument
applies if there is a powerful line $l'$ which is not parallel to $l$, except
that in this case the intersection point of $l$ and $l'$ gets associated to
itself, and there is a unique point on $l$ not associated to any point of
$l'$, and a unique point on $l'$ not associated to any point of $l$; this
results in at least $2\Big(\frac{p+7}2-2\Big)-(p-2)=5$ pairs
 $(g,g')\in l\times l'$ with $g\ne g'$ and $g,g'\in P$, and hence to at least
five rich lines through $x$, a contradiction.

We thus conclude that there is a unique powerful line $l$ and, consequently,
a unique rich direction. Fix a point $x\in P\stm l$. The line through $x$
parallel to $l$ is not powerful; therefore, contains at most $\frac{p+5}2$
points of $P$, including $x$ itself. Any other line through $x$ has the
direction other than that of $l$, and therefore is not rich; as a result,
contains at most one point of $P$ other than $x$. This shows that
  $$ |P| \le \frac{p+5}2 + p < \frac{3p+7}2, $$
a contradiction.
\end{proof}

\begin{lemma}\label{l:AQ}
Suppose that $p$ is a prime, $h\in L(\F_p)$ is a nonzero function, and
$A\seq\F_p$ is a set with $|A|>\frac23p$. If for any $a_1\longc a_4\in A$
such that $a_1+a_2=a_3+a_4$ we have $h(a_1)h(a_2)=h(a_3)h(a_4)$, then either
$|\supp\hh|=1$, or $|\supp\hh|\ge|A|$.
\end{lemma}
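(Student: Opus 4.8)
The hypothesis is an additive-multiplicativity condition on $h$ over the dense set $A$, and the conclusion concerns $|\supp\hh|$; this is precisely the setup where the Biró--Tao bound (Theorem~\reft{birotao}) should be combined with a convolution trick, exactly as in the proof of Theorem~\reft{as2}. The plan is to form the auxiliary function $\Del:=h\ast(h\cdot 1_{t}-h\cdot 1_{t_0})$ for suitable translates — or, more directly, to examine $\wh{h\cdot h}=\hh\ast\hh$ and compare it with translates. Concretely, for $s\in\F_p$ consider the function $g\mapsto h(g)h(s-g)$ on $\F_p$; its sum is $(h\ast h)(s)$ up to normalization, and its Fourier coefficients at $\chi$ are $\sum_{\psi}\hh(\chi\psi)\hh(\psi^{-1})$-type expressions. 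The condition ``$a_1+a_2=a_3+a_4\Rightarrow h(a_1)h(a_2)=h(a_3)h(a_4)$'' says that, restricted to $a_i\in A$, the function $(a_1,a_2)\mapsto h(a_1)h(a_2)$ depends only on $a_1+a_2$. I would use this to show that a certain difference of translated products, supported on a small set once we stay inside $A+A$, has a Fourier transform supported on $\supp\hh$ — so the smallness of one support forces the smallness of the other unless we are in the degenerate case $|\supp\hh|=1$.

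**Key steps, in order.** First, set $w(a):=h(a)$ for $a\in A$ and note the hypothesis is equivalent to the existence of a function $\varphi$ on $A+A$ with $h(a_1)h(a_2)=\varphi(a_1+a_2)$ for all $a_1,a_2\in A$; since $|A|>\tfrac23 p$, by Cauchy--Davenport (or just counting representations) we have $A+A=\F_p$ and moreover every element of $\F_p$ has $\ge 2|A|-p>\tfrac{p}{3}$ representations as $a_1+a_2$ with $a_i\in A$. Second, fix $x_0\in A$ with $h(x_0)\ne 0$ (possible since $h\not\equiv 0$ and $A$ is dense, after possibly translating so a nonzero value lands in $A$) and consider $\Del\in L(\F_p)$ defined by $\Del(g):=h(g)\big(h\cdot 1_A\big)*(\text{shift})$ — more precisely I would take $\Del := h\ast\big(h\cdot(1_{x_0}-1_{x_1})\big)$ for two points $x_0,x_1\in A$, so that $\wh\Del=\hh\cdot\big(\ochi(x_0)\hh-\ochi(x_1)\hh\big)$-type product and $\supp\wh\Del\seq\supp\hh$. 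Third, show that $\Del$ vanishes on the large set $A$: for $g\in A$ one has $\Del(g)=p^{-1}\big(h(g)h(x_0)-h(g)h(x_1)\big)$ only if the cross terms telescope, but the additive-multiplicative condition is exactly what makes the relevant sums collapse — here one must be a little careful, and I expect to instead run the argument with $\Del(g):=\sum_{a\in A} h(a)\big(h(g-a+x_0)-h(g-a+x_1)\big)$, whose vanishing on a dense set follows directly from $h(a)h(g-a+x_0)=h(g+x_0-?)\cdots$ being independent of the splitting when all arguments lie in $A$. Fourth, with $|\supp\Del|\le|\F_p|-|A|<\tfrac13 p$ small and $\supp\wh\Del\seq\supp\hh$, apply Theorem~\reft{birotao} to $\Del$: either $\Del\equiv 0$, or $|\supp\Del|+|\supp\wh\Del|\ge p+1$, forcing $|\supp\hh|\ge p+1-|\supp\Del|>\tfrac23 p$, and in particular $|\supp\hh|\ge|A|$ once one checks $p+1-|\supp\Del|\ge|A|$ from $|\supp\Del|\le p-|A|$ (which gives $p+1-|\supp\Del|\ge|A|+1>|A|$). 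Fifth, handle the case $\Del\equiv 0$ for all choices of the shift points: this should force $h$ to be (a scalar multiple of) a character on its support in a structured way, giving $|\supp\hh|=1$; alternatively, if $\Del\equiv0$ for every pair $x_0,x_1\in A$, then $\hh(\chi)\ochi(x_0)=\hh(\chi)\ochi(x_1)$ whenever $\hh(\chi)\ne 0$, i.e. $\ochi(x_0-x_1)=1$ for all $x_0,x_1\in A$, which (since $A+A=\F_p$ forces $A$ to generate $\F_p$) gives $\chi=1$, hence $|\supp\hh|\le 1$; combined with $h\ne0$ this yields $|\supp\hh|=1$.

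**Main obstacle.** The delicate point is Step~3/4: pinning down the right auxiliary function $\Del$ so that simultaneously (i) $\Del$ provably vanishes on a set of size $>\tfrac23 p$ as a clean consequence of the additive-multiplicativity hypothesis (the hypothesis only controls products $h(a_1)h(a_2)$ with \emph{both} indices in $A$, so one has to arrange that every term in the defining sum for $\Del(g)$ has all its $h$-arguments in $A$, which is where the density $|A|>\tfrac23 p$ and $A+A=\F_p$ must be used to guarantee enough valid representations), and (ii) $\supp\wh\Del$ is genuinely contained in $\supp\hh$ so that Theorem~\reft{birotao} transfers information in the right direction. I expect one needs $|A|>\tfrac23 p$ precisely to make the representation count $2|A|-p$ strictly positive and, more tightly, to make the ``error'' support bound $p-|A|<\tfrac13 p$ small enough that $p+1-|\supp\Del|$ clears $|A|$. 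Once the bookkeeping of which $h$-arguments lie in $A$ is set up correctly, the rest is a direct application of Theorem~\reft{birotao} together with the elementary observation that $\supp\wh\Del\seq\supp\hh$ from the product formula for the Fourier transform.
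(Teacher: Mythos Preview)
Your proposal has a genuine numerical gap in Step~4. With $\Del = h\ast\big(h\cdot(1_{x_0}-1_{x_1})\big)$ one gets $\Del(g)=p^{-1}\big(h(x_0)h(g-x_0)-h(x_1)h(g-x_1)\big)$, and the hypothesis forces $\Del(g)=0$ only when \emph{all four} of $x_0,x_1,g-x_0,g-x_1$ lie in $A$, i.e.\ when $g\in(A+x_0)\cap(A+x_1)$. This intersection has size at least $2|A|-p$, not $|A|$, so you can only claim $|\supp\Del|\le 2(p-|A|)$; Theorem~\reft{birotao} then yields merely $|\supp\hh|\ge 2|A|-p+1$, which falls short of $|A|$ whenever $|A|<p-1$. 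The threshold $|A|>\tfrac23 p$ does not close this gap, and your alternative sum $\sum_{a\in A}h(a)\big(h(g-a+x_0)-h(g-a+x_1)\big)$ has the same defect, since the arguments $g-a+x_i$ need not lie in $A$. (Step~5 also contains a slip: $\Del\equiv0$ gives $h(x_0)\ochi(x_0)=h(x_1)\ochi(x_1)$ for $\chi\in\supp\hh$, not $\ochi(x_0)=\ochi(x_1)$; that step is salvageable, but the preceding one is not.)

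The paper avoids the factor-of-two loss by a direct construction rather than a convolution. After disposing of the case $h\vert_A\equiv 0$ and showing via Cauchy--Davenport that otherwise $A\seq\supp h$, it defines $\chi(a_1-a_2):=h(a_1)/h(a_2)$ for $a_1,a_2\in A$; this is well-defined by the hypothesis, covers all of $\F_p$ since $A-A=\F_p$, and is checked to be a character. Then $h=C\chi$ on $A$ for some nonzero $C$, and the auxiliary function is simply $\Del:=h-C\chi$, which vanishes on all of $A$, giving $|\supp\Del|\le p-|A|$ with no loss. If $\Del\not\equiv0$, Theorem~\reft{birotao} gives $|\supp\wh\Del|\ge|A|+1$; since $\wh\Del=\hh-C\cdot 1_{\{\chi\}}$ differs from $\hh$ at only one point, $|\supp\hh|\ge|A|$. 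The idea you are missing is to subtract off an explicitly identified character rather than convolve $h$ with itself: the subtraction produces a function vanishing on $A$ itself, not merely on an intersection of two translates of $A$.
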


\begin{proof}
If $h$ vanishes on the whole set $A$, then $|\supp h|\le p-|A|$, whence
$|\supp\hh|\ge|A|+1$ by Theorem~\reft{birotao}. Suppose thus that the set
$A_1:=A\cap\supp h$ is nonempty, and let $A_0:=A\stm A_1$. If $A_0$ is
nonempty either, then $|A_0-A_1|\ge|A_0|+|A_1|-1=|A|-1$ by the well-known
Cauchy-Davenport theorem; consequently, $|A_0-A_1|+|A|\ge 2|A|-1>p$, and the
pigeonhole principle gives $A_0-A_1+A=\F_p$. Hence, for any $a\in A$ there
exist $a_0\in A_0$, $a_1\in A_1$, and $a'\in A$ with $a_0+a'=a_1+a$, implying
$h(a)=h(a_0)h(a')/h(a_1)=0$. This contradicts the assumption that $h$ does
not vanish on the whole set $A$, and thus shows that $A_0$ is empty; that is,
$A\seq\supp h$.

Since $|A|>\frac12\,p$, every element $g\in\F_p$ can be represented as
$g=a_1-a_2$ with $a_1,a_2\in A$, and we let $\chi(g):=h(a_1)/h(a_2)$; notice
that this definition is legitimate as for any other representation
$g=a_1'-a_2'$ with $a_1',a_2'\in A$ we have $h(a_1')/h(a_2')=h(a_1)/h(a_2)$.
We now claim that $\chi(g_1-g_2)=\chi(g_1)/\chi(g_2)$ for any
$g_1,g_2\in\F_p$. To see this, we notice that the intersection
$(A-g_1)\cap(A-g_2)\cap A$ is nonempty by the pigeonhole principle, and find
$a_1,a_2,a\in A$ with $a_1-g_1=a_2-g_2=a$; this gives
  $$ g_1=a_1-a,\ g_2=a_2-a,\ g_1-g_2=a_1-a_2, $$
as a result of which
  $$ \chi(g_1-g_2) = h(a_1)/h(a_2) = \chi(g_1) / \chi(g_2). $$
We conclude that $\chi$ is a character of the group $\F_p$. Moreover,
  $$ \chi(a_1-a_2) = h(a_1)/h(a_2),\ a_1,a_2\in A $$
shows that $h\ochi$ is constant on $A$; that is, there exists a nonzero
$C\in\C$ such that $h(a)=C\chi(a)$ for any $a\in A$. Consequently, the
difference function $\Del:=h-C\chi$ is supported outside of $A$. Hence,
either it is identically zero, or $|\supp\wh{\Del}|\ge p+1-(p-|A|)=|A|+1$ by
Theorem~\reft{birotao}, in which case $|\supp\hh|\ge|A|$.
\end{proof}

\begin{proof}[Proof of Theorem~\reft{as3}]
We assume, without loss of generality, that $|X|\le|S|$, and then for a
contradiction that
\begin{equation}\label{e:7/6SXsmall}
  |X| < 3(1-\eps)p \quad\text{and}\quad |S| < \frac16\,\eps p^{4/3},
\end{equation}
while $X$ is not contained in a union of two lines.

If we had $\eps^3\le 216p^{-1}$, then~\refe{7/6SXsmall} would give
$|X|\le|S|<p$, contradicting Theorem~\reft{basic}; hence $\eps^3>216p^{-1}$,
implying $p>216$.

If we had $|X|\le\frac{3p+5}2$, then Theorem~\reft{as2} would result in
  $$ |S| \ge \frac{\sqrt p}2\,\Big( 2p - \frac{3p+5}2 \Big)
                          = \frac14\,(p-5)\sqrt p> \frac16\,\eps p^{4/3}, $$
contradicting \refe{7/6SXsmall}. Thus,
\begin{equation}\label{e:7/6Xlarge}
  |X|\ge\frac{3p+7}2;
\end{equation}
combining this with~\refe{7/6SXsmall}, we get
  $$ \frac32\,p < |X|< 3(1-\eps)p, $$
and it follows that $\eps<1/2$. The inequalities
  $$ \frac{216}p < \eps^3 < \frac1{8}\quad  $$
are tacitly used in the computations below.

Recalling~\refe{7/6Xlarge} and applying Lemma~\refl{richlines} if $|X|\le
2p$, and Lemma~\refl{lines} if $2p<|X|<3p$, and observing that if $|X|>2p$,
then there is a line in every given direction containing at least three
points of $X$, we conclude that there is a nonzero, proper subgroup $H<G$
such that every $H^\perp$-line contains at most
  $$ \max \left\{ \frac{p+5}2,\sqrt{3p}+\frac32 \right\} = \frac{p+5}2 $$
points of $X$, while there is an $H^\perp$-line containing at least three
points of $X$. Throughout the proof, we consider this subgroup $H$ fixed, and
define $n_S,n_X,K_S,K_X$ by~\refe{nSnX} and~\refe{KSKX}.

From~\refe{7/6SXsmall} and the assumption that $X$ is not contained in a
union of two lines, we get
\begin{equation*}\label{e:nXsqrt73}
  n_X \le \frac13\,|X| < \,(1-\eps)p,
\end{equation*}
whence, by~\refe{7/6SXsmall} and Lemma~\refl{SXmn},
  $$ \frac16\,\eps p^{4/3} > |S| \ge n_S(p+1-n_X) > \eps n_Sp; $$
consequently,
  $$ n_S < \frac16\,p^{1/3}. $$
Applying~\refe{7/6SXsmall} and Lemma~\refl{SXmn} once again,
  $$ 3\,(1-\eps)p > |X| \ge n_X(p+1-n_S)
                    > n_X \Big(p-\frac16\,p^{1/3} \Big) > (1-\eps)n_Xp. $$
This gives $n_X\le 2$. As a result, by Lemma~\refl{SXmn}, we have
 $K_S\ge p-1$. Hence, averaging and using~\refe{7/6SXsmall},
\begin{equation}\label{e:newmsqrt}
  n_S \le K_S^{-1}|S| < \frac{\eps}6\, \frac{ p^{4/3}}{p-1}.
\end{equation}

Denoting by $N$ the number of $H^\perp$-lines containing exactly one or
exactly two points of $X$, we have by Lemma~\refl{SXmn}
  $$ |X| \ge N + 3(K_X-N) \ge N + 3(p +1- n_S - N), $$
whence
\begin{equation}\label{e:lowerboundforN}
  2N > 3(p - n_S)- |X|.
\end{equation}

Fix an element $\gam\in G\stm H$, and consider the functions
  $$ F_g := f\cdot(1_{g+\gam+H}-1_{g+H}),\quad g\in G. $$
By~\refe{psf}, we have
\begin{equation}\label{e:Fghatt}
  \wh{F_g}(\chi) = \frac1p\,\sum_{\psi\in H^\perp}
                         \hf(\chi\psi)(\psi(\gam)-1) \psi(g),\ \chi\in\hG.
\end{equation}
In the case where the $H^\perp$-line through a character $\chi\in X$ contains
exactly one more character of $X$, say $\chi\psi$ with some $\psi\in
H^\perp$, this gives
  $$ \wh{F_g}(\chi) = \frac1p\,\hf(\chi\psi)(\psi(\gam)-1)\psi(g); $$
it follows that if $g_1,g_2,g_3,g_4\in G$ satisfy
\begin{equation}\label{e:new7/6gi}
  g_1+g_2=g_3+g_4,
\end{equation}
then
  $$ \wh{F_{g_1}}(\chi)\wh{F_{g_2}}(\chi)
                               - \wh{F_{g_3}}(\chi)\wh{F_{g_4}}(\chi) = 0. $$
This conclusion stays true also if the $H^\perp$-line through $\chi$ does not
contain any points of $X$ other than $\chi$, as in this case,
by~\refe{Fghatt}, we have $\wh{F_{g_i}}(\chi)=0$ for each $i\in[1,4]$.

For a quadruple $\bg=(g_1,g_2,g_3,g_4)\in G^4$ satisfying~\refe{new7/6gi},
let
  $$ \Del_\bg := f\ast(F_{g_1}\ast F_{g_2}-F_{g_3}\ast F_{g_4}); $$
thus,
\begin{equation}\label{e:Delbghatt}
  \wh{\Del_\bg}(\chi) = \hf(\chi)\,\big(\wh{F_{g_1}}(\chi)\wh{F_{g_2}}(\chi)
                - \wh{F_{g_3}}(\chi)\wh{F_{g_4}}(\chi)\big),\quad \chi\in\hG.
\end{equation}
Write
\begin{gather*}
  T := \supp\Del_\bg,\ Y := \supp\wh{\Del_\bg}, \\
  n_T := \min\{|(t+H)\cap T|\colon t\in T\},\ %
                    n_Y:=\min\{|(\chi H^\perp)\cap Y|\colon \chi\in Y \}, \\
  K_T := |T+H|/|H|,\ K_Y := |YH^\perp|/|H^\perp|.
\end{gather*}
By~\refe{Delbghatt}, we have $Y\seq X$, and we have shown above that
$\wh{\Del_\bg}(\chi)=0$ for any character $\chi\in X$ with
 $|\chi H^\perp\cap X|\le 2$. Along with~\refe{7/6SXsmall},
\refe{lowerboundforN}, and~\refe{newmsqrt}, this gives
  $$ 2K_Y \le 2p - 2N < |X| - p + 3n_S < 2\Big(1-\frac54\,\eps\Big)p, $$
implying
\begin{equation}\label{e:nTlarge}
  n_T \ge p + 1 - K_Y > \frac54\,\eps p
\end{equation}
in view of Lemma~\refl{SXmn}. On the other hand, letting
  $$ k_g = |(S-g)\cap H| $$
(so that $k_g=|\supp(f\cdot1_{g+H})|$) we have
\begin{equation}\label{e:new76TSl}
  |T| \le |S|\big( (k_{g_1}+k_{g_1+\gam})(k_{g_2}+k_{g_2+\gam})
                    +(k_{g_3}+k_{g_3+\gam})(k_{g_4}+k_{g_4+\gam}) \big).
\end{equation}

Suppose that $\Del_\bg\ne 0$. By the assumption that every $H^\perp$-line
contains at most $\frac{p+5}2$ points of $X$, and since $Y\seq X$, we have
  $$ n_Y \le \frac{p+5}{2} $$
and then, by Lemma~\refl{SXmn} and~\refe{nTlarge},
  $$ |T| \ge n_T(p+1-n_Y) > \frac54\,\eps p \cdot \frac{p-3}{2}
                                                      > \frac12\,\eps p^2. $$
Comparing this with~\refe{new76TSl}, we obtain
\begin{equation}\label{e:new76Slg}
  |S| \big( (k_{g_1}+k_{g_1+\gam})(k_{g_2}+k_{g_2+\gam})
             +(k_{g_3}+k_{g_3+\gam})(k_{g_4}+k_{g_4+\gam}) \big)
                                                > \frac12\,\eps p^{2},
\end{equation}
provided that $\bg=(g_1\longc g_4)\in G^4$ satisfies~\refe{new7/6gi}, and
$\Del_\bg\ne 0$.

Let $\Gam<G$ be the subgroup generated by $\gam$. We have
  $$ \sum_{g\in\Gam} (k_g+k_{g+\gam})
                                 = 2 \sum_{g\in\Gam} |S\cap(g+H)| = 2|S|; $$
as a result, denoting by $A$ the set of all those $g\in\Gam$ with
$k_g+k_{g+\gam}<6|S|/p$, we have $|A|>\frac23\,p$. Moreover, as it follows
from~\refe{new76Slg} and~\refe{7/6SXsmall}, if $\bg=(g_1\longc g_4)\in A^4$
satisfies~\refe{new7/6gi}, and $\Del_\bg\ne 0$, then
  $$ \frac12\,\eps p^{2} < 72|S|^3p^{-2}
                                     < 72 \cdot\frac1{216}\,\eps^3 p^{2}, $$
a contradiction.

We conclude that for any $\bg\in A^4$ satisfying~\refe{new7/6gi}, we have
$\Del_\bg=0$; that is, by~\refe{Delbghatt},
  $$ \wh{F_{g_1}}(\chi)\wh{F_{g_2}}(\chi)
                  = \wh{F_{g_3}}(\chi)\wh{F_{g_4}}(\chi),\quad \chi\in X. $$
For every character $\chi\in X$ we now consider the function $h_\chi\in
L(\Gam)$ defined by
  $$ h_\chi(g) := \wh{F_g}(\chi),\ g\in\Gam. $$
Identifying $H^\perp$ with the character group $\wh{\Gam}$, in view
of~\refe{Fghatt} we have $|\supp\wh{h_\chi}|=\kappa_\chi-1$, where
$\kappa_\chi=|\chi H^\perp\cap X|$ is the number of points of $X$ on the
$H^\perp$-line through $\chi$. Applying Lemma~\refl{AQ} we derive that either
$\kappa_\chi\le 2$, or $\kappa_\chi\ge|A|+1>\frac23\,p>\frac{p+5}2$, for any
character $\chi\in X$. This, however, contradicts the choice of $H$ at the
beginning of the proof.
\end{proof}

\appendix
\section*{Appendix: Classifying the exceptions}\label{s:exceptions}

In this section we prove Lemmas~\refl{cosets1}--\refl{twononparallel}
classifying the exceptional cases of Theorems~\reft{rational}--\reft{as3},
and also prove Corollary~\refc{uppergray}.

Recall, that for a subgroup $H$ of a finite abelian group $G$, a function
$f\in L(G)$ is called \emph{$H$-periodic} if $f(g+h)=f(g)$ for any $g\in G$
and $h\in H$. A set $S\seq G$ is $H$-periodic if its indicator function $1_S$
is $H$-periodic; that is, $g\in S$ if and only if $g+h\in S$, for any $g\in
G$ and $h\in H$. Equivalently, a function $f$ is $H$-periodic if it is
constant on $H$-cosets, and a set $S$ is $H$-periodic if it is a union of
$H$-cosets.

\begin{lemma}\label{l:cosets1}
Suppose that $H$ is a subgroup of a finite abelian group $G$, and
 $f\in L(G)$.
\begin{itemize}
\item[i)] We have $\supp f\seq H$ if and only if $\hf$ is
    $H^\perp$-periodic. Also, if $\supp f\seq g+H$ for some $g\in G$,
    then $\supp\hf$ is $H^\perp$-periodic.
\item[ii)] We have $\supp\hf\seq H^\perp$ if and only if $f$ is
    $H$-periodic. Also, if $\supp\hf\seq\chi H^\perp$ for some
    $\chi\in\hG$, then $\supp f$ is $H$-periodic.
\end{itemize}
\end{lemma}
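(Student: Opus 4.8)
The plan is to establish part~i) first and then deduce part~ii) by duality. Both halves of part~i) will rest on the single orthogonality identity
\[
  \sum_{\psi\in H^\perp}\psi(g) = |H^\perp|\cdot 1_H(g),\quad g\in G,
\]
which holds because $H^\perp$, viewed as the character group of $G/H$, exhausts $\wh{G/H}$, together with the fact $(H^\perp)^\perp=H$ recorded in Section~\refs{notation}.

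For the equivalence in part~i), I would argue as follows. If $\supp f\seq H$, then for any $\chi\in\hG$ and $\psi\in H^\perp$ one has $\psi(h)=1$ for every $h\in H$, so the sum defining $\hf(\chi\psi)$ is literally the sum defining $\hf(\chi)$; hence $\hf$ is $H^\perp$-periodic. For the converse, assuming $\hf$ to be $H^\perp$-periodic, I would group the terms of the inversion formula $f(g)=\sum_{\chi\in\hG}\hf(\chi)\chi(g)$ according to the $|H|$ cosets of $H^\perp$ in $\hG$, pull the value of $\hf$ (constant on each such coset) out of the inner sum, and invoke the orthogonality identity to conclude that $f(g)=0$ whenever $g\notin H$. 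An alternative to this last computation is a dimension count: the Fourier transform carries the $|H|$-dimensional space of functions supported on $H$ injectively, hence — by equality of dimensions — isomorphically, onto the $|H|$-dimensional space of $H^\perp$-periodic functions on $\hG$, so an $H^\perp$-periodic $\hf$ forces $f$ to be supported on $H$.

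For the coset statement of part~i), I would reduce to the case just treated by translation: if $\supp f\seq g_0+H$, set $f'(g):=f(g_0+g)$, so that $\supp f'\seq H$ and hence $\wh{f'}$ is $H^\perp$-periodic; a one-line change of variables gives $\wh{f'}(\chi)=\chi(g_0)\hf(\chi)$, so $|\wh{f'}|=|\hf|$ pointwise and therefore $\supp\hf=\supp\wh{f'}$, which is $H^\perp$-periodic as the support of an $H^\perp$-periodic function. Finally, part~ii) will follow by applying part~i) to the triple $(\hG,H^\perp,\hf)$: since $(H^\perp)^\perp=H$ and the Fourier transform of $\hf$, viewed on $\wh{\hG}\cong G$, equals $f$ up to a scalar factor and the reflection $g\mapsto-g$ — neither of which affects $H$-periodicity or supports — the two statements of part~i) for the pair $(\hG,H^\perp)$ translate directly into those of part~ii).

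I do not anticipate a genuine obstacle here: the mathematical content is exactly the orthogonality identity displayed above, and the only place asking for a little care is the scalar-and-reflection bookkeeping in the duality step, which is precisely the harmless normalization inconsistency already flagged in Section~\refs{notation}.
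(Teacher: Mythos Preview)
Your argument is correct and entirely standard: the orthogonality identity you display does all the work for part~i), the translation trick cleanly handles the coset version, and the duality reduction for part~ii) is sound since reflection $g\mapsto-g$ preserves both $H$-periodicity and the property of being a union of $H$-cosets. The paper itself does not give a proof --- it simply says ``We omit the straightforward verification'' --- so there is no alternative approach to compare against; your write-up is exactly the kind of verification the authors had in mind.
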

We omit the straightforward verification.

\begin{lemma}\label{l:cosets2}
Suppose that $H$ is a subgroup, $g$ is an element, and $\chi\in\hG$ is a
character of a finite abelian group $G$, and that $f\in L(G)$ is a nonzero
function.
\begin{itemize}
\item[i)] If $\supp f\seq g+H$ and $\supp\hf=\chi_1 H^\perp\longu\chi_k
    H^\perp$ where $\chi_1\longc\chi_k\in\hG$ and the union is disjoint
    (cf.~Lemma~\refl{cosets1}), then there are nonzero coefficients
    $c_1\longc c_k\in\C$ such that
    $$ f(z) = \begin{cases}
                c_1\chi_1(z)\longp c_k\chi_k(z)\ &\text{if}\ z\in g+H, \\
                0                              \ &\text{if}\ z\notin g+H.
            \end{cases} $$
\item[ii)] If $\supp\hf\seq\chi H^\perp$ and $\supp
    f=(g_1+H)\longu(g_k+H)$ where $g_1\longc g_k\in G$ and the union is
    disjoint (cf.~Lemma~\refl{cosets1}), then there are nonzero
    coefficients $c_1\longc c_k\in\C$ such that
    $$ f(z) = \begin{cases}
                c_i\chi(z)\ &\text{if}\ z\in g_i+H,\ i\in[1,k], \\
                0   \ &\text{if}\ z\notin(g_1+H)\longu(g_k+H).
            \end{cases} $$
\end{itemize}
\end{lemma}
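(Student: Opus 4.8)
The plan is to obtain both parts directly from the Fourier inversion formula, part~ii) being the exact dual of part~i).

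For part~i), write $\supp f\seq g+H$ and $\supp\hf=\chi_1H^\perp\longu\chi_kH^\perp$ with the union disjoint. The one fact I would extract first is that every $\eta\in H^\perp$ is trivial on $H$, so $\eta(z)=\eta(g)$ for all $z\in g+H$; feeding $\supp f\seq g+H$ into the definition of $\hf$ then gives $\hf(\chi_j\eta)=\overline{\eta(g)}\,\hf(\chi_j)$ for every $\eta\in H^\perp$ and every $j$ (a quantitative version of Lemma~\refl{cosets1}~i)). In particular $|\hf|$ is constant on each coset $\chi_jH^\perp$, and since this coset lies in $\supp\hf$ we get $\hf(\chi_j)\ne0$. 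Now I apply inversion, grouping the sum over $\hG$ according to the cosets $\chi_jH^\perp$: for $z\in g+H$,
$$
 f(z)=\sum_{j=1}^k\;\sum_{\eta\in H^\perp}\hf(\chi_j\eta)(\chi_j\eta)(z)
     =\sum_{j=1}^k\chi_j(z)\sum_{\eta\in H^\perp}\hf(\chi_j\eta)\,\eta(g).
$$
Substituting the relation $\hf(\chi_j\eta)=\overline{\eta(g)}\,\hf(\chi_j)$ and using $|\eta(g)|=1$, the inner sum collapses to $|H^\perp|\,\hf(\chi_j)$, so $f(z)=\sum_{j=1}^k c_j\chi_j(z)$ on $g+H$ with $c_j:=|H^\perp|\,\hf(\chi_j)\ne0$; and $f$ vanishes off $g+H$ by hypothesis. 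This is exactly the asserted form.

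For part~ii), write $\supp\hf\seq\chi H^\perp$ and $\supp f=(g_1+H)\longu(g_k+H)$ with the union disjoint. Inversion now gives, for every $z\in G$,
$$
 f(z)=\sum_{\eta\in H^\perp}\hf(\chi\eta)(\chi\eta)(z)=\chi(z)\,\phi(z),
 \qquad \phi(z):=\sum_{\eta\in H^\perp}\hf(\chi\eta)\,\eta(z).
$$
Since $\phi$ is a linear combination of the characters $\eta\in H^\perp$, all of which are $H$-periodic, $\phi$ is itself $H$-periodic; and as $\chi$ vanishes nowhere, $\supp\phi=\supp f=(g_1+H)\longu(g_k+H)$. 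Being $H$-periodic and supported on precisely these $k$ disjoint cosets, $\phi$ takes a constant nonzero value $c_i$ on each $g_i+H$ and vanishes elsewhere, so $f(z)=c_i\chi(z)$ for $z\in g_i+H$ and $f(z)=0$ otherwise. (Alternatively, part~ii) follows from part~i) by the duality between $G$ and $\hG$, using $(H^\perp)^\perp=H$ and the identification of $G$ with its double dual.)

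I do not foresee a genuine obstacle here; the only points that need a line of care are checking that the coefficients are nonzero---which in both parts reduces to observing that the relevant transform, up to a unimodular factor, is constant on each support coset and hence cannot vanish at a point of a coset contained in the support---and keeping the character evaluations $\chi_j(g)$ and $\eta(g)$ straight when passing between the coset $g+H$ and the subgroup $H$.
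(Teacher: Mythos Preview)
Your proof is correct and follows essentially the same route as the paper: both parts are obtained from Fourier inversion, grouping the sum by $H^\perp$-cosets, and in part~i) you and the paper both identify $c_j=|H^\perp|\,\hf(\chi_j)$ (the paper reaches this via the identity~\refe{psf}, whereas you derive the equivalent relation $\hf(\chi_j\eta)=\overline{\eta(g)}\,\hf(\chi_j)$ directly). Part~ii) is verbatim the paper's argument, with your $\phi$ being the paper's $\ochi f$.
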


\begin{proof}
For the first part of the lemma we notice that for any $z\in g+H$, by the
inversion formula we have
  $$ f(z) = \sum_{i=1}^k \sum_{\psi\in H^\perp}
              \hf(\chi_i\psi)\chi_i(z)\psi(z) = \sum_{i=1}^k c_i\chi_i(z) $$
where, by~\refe{psf} and in view of $f\cdot1_{g+H}=f$,
  $$ c_i = \sum_{\psi\in H^\perp} \hf(\chi_i\psi) \psi(g)
       = |H^\perp| \wh{f\cdot1_{g+H}}(\chi_i)
       = |H^\perp| \hf(\chi_i) \ne 0. $$
This proves the first assertion.

Turning to the second assertion, we observe that the inversion formula gives
  $$ f(z) = \sum_{\psi\in H^\perp} \hf(\chi\psi) \chi(z)\psi(z),\ z\in G. $$
It follows that the function $\ochi f$ is $H$-periodic, and since $\supp\ochi
f=\supp f$, this function is constant and nonzero on each coset $g_i+H,\
i\in[1,k]$. Denoting by $c_i$ its values on the corresponding cosets
completes the proof.
\end{proof}

\begin{lemma}\label{l:twoparallel}
Suppose that $G$ is a finite abelian group, $H<G$ is a proper, prime
subgroup, and $\chi_1,\chi_2\in\hG$ are characters with
 $\chi_2 H^\perp\ne\chi_1 H^\perp$. For a function $f\in L(G)$, we
have $\supp\hf\seq\chi_1 H^\perp\cup\chi_2 H^\perp$ if and only if there are
$H$-periodic functions $f_1,f_2\in L(G)$ such that
  $$ f(g) = \chi_1(g)f_1(g)+\chi_2(g)f_2(g),\ g\in G. $$
Moreover, writing in this case $N:=|\supp f_1\cup\supp f_2|$, we have
  $$ \bigg(1-\frac1{|H|}\bigg)\,N \le |\supp f| \le N. $$
\end{lemma}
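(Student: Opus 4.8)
The plan is to prove the biconditional first and the size inequalities afterward. For the ``if'' direction, suppose $f(g)=\chi_1(g)f_1(g)+\chi_2(g)f_2(g)$ with $f_1,f_2$ being $H$-periodic. By Lemma~\refl{cosets1} ii), the $H$-periodicity of $f_i$ means exactly that $\supp\wh{f_i}\seq H^\perp$. Since multiplication of a function by a character $\chi$ translates its Fourier transform by $\chi$, we have $\supp\wh{\chi_i f_i}\seq\chi_i H^\perp$, and hence $\supp\hf\seq\supp\wh{\chi_1 f_1}\cup\supp\wh{\chi_2 f_2}\seq\chi_1 H^\perp\cup\chi_2 H^\perp$. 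For the ``only if'' direction, assume $\supp\hf\seq\chi_1 H^\perp\cup\chi_2 H^\perp$ and define $f_i$ by $f_i(g):=\ochi_i(g)\sum_{\psi\in H^\perp}\hf(\chi_i\psi)\psi(g)$ for $g\in G$; this is the inverse Fourier transform of $\hf$ restricted to $\chi_i H^\perp$, pulled back by $\ochi_i$. Each $f_i$ is $H$-periodic by Lemma~\refl{cosets1} ii) (its Fourier transform is supported in $H^\perp$), and summing over the at most two cosets in the support and using the inversion formula recovers $f(g)=\chi_1(g)f_1(g)+\chi_2(g)f_2(g)$. The hypothesis $\chi_2 H^\perp\ne\chi_1 H^\perp$ guarantees the two cosets are disjoint, so no Fourier coefficient of $\hf$ is counted twice.

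For the size inequalities, the upper bound $|\supp f|\le N$ is immediate: if $g\notin\supp f_1\cup\supp f_2$, then $f_1(g)=f_2(g)=0$, so $f(g)=0$. For the lower bound, the key point is that for a fixed $H$-coset $g+H$, the restriction of $f$ to that coset is $g\mapsto\chi_1(g)c_1+\chi_2(g)c_2$ where $c_i=f_i(g)$ is constant on the coset (by $H$-periodicity). Writing $\eta:=\chi_2\ochi_1$, which is a nonprincipal character on $G$ that restricts nontrivially to $H$ (since $\chi_2 H^\perp\ne\chi_1 H^\perp$ is equivalent to $\eta\notin H^\perp$, i.e. $\eta|_H\ne 1$), the restriction of $\ochi_1 f$ to $g+H$ is $h\mapsto c_1+c_2\eta(g+h)$. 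Now $\eta|_H$ is a nonprincipal character of the prime group $H$, so $\eta(g+h)=\eta(g)\eta(h)$ ranges over $\eta(g)$ times all $|H|$-th roots of unity as $h$ ranges over $H$. Hence $c_1+c_2\eta(g+h)$ vanishes for at most one value of $h\in H$ unless $c_1=c_2=0$. Consequently, on any $H$-coset meeting $\supp f_1\cup\supp f_2$, the function $f$ is nonzero on at least $|H|-1$ of the $|H|$ points. Since $N$ is the number of points in $\supp f_1\cup\supp f_2$ and this set is $H$-periodic (a union of $N/|H|$ cosets, each contributing at least $|H|-1$ to $|\supp f|$), we get $|\supp f|\ge(1-1/|H|)N$.

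I do not expect a serious obstacle here; the only delicate point is the use of primality of $H$, which enters precisely to ensure that $\eta|_H$, being nonprincipal, is actually \emph{injective} on $H$ (equivalently, a generator of $\hH$), so that $\eta(h)$ takes $|H|$ distinct values and the linear combination $c_1+c_2\eta(h)$ has at most one zero. Without primality one would only know that $\eta|_H$ has some nontrivial image, and the number of zeros could be as large as $|H|/|{\operatorname{im}}(\eta|_H)|$, which would weaken the constant. It is also worth double-checking that $\supp f_1\cup\supp f_2$ really is $H$-periodic — this is clear since each $f_i$ individually is $H$-periodic — and that the degenerate case where one of $\chi_1 H^\perp$, $\chi_2 H^\perp$ fails to meet $\supp\hf$ (so one $f_i$ is identically zero) is covered: then $f=\chi_j f_j$ with $f_j$ $H$-periodic and nowhere-zero on its support coset, giving $|\supp f|=N$, consistent with the stated bounds.
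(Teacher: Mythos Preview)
Your approach matches the paper's: Fourier inversion for the biconditional, and a coset-by-coset zero count for the lower bound. One slip to fix: your displayed formula for $f_i$ carries an extra $\ochi_i(g)$ factor. As written, $f_i(g)=\ochi_i(g)\sum_{\psi\in H^\perp}\hf(\chi_i\psi)\psi(g)$ is \emph{not} $H$-periodic (under $g\mapsto g+h$ it picks up $\ochi_i(h)$), and then $\chi_1 f_1+\chi_2 f_2$ fails to recover $f$. Your verbal description (``inverse transform of $\hf$ restricted to $\chi_i H^\perp$, pulled back by $\ochi_i$'') is the correct one and gives $f_i(g)=\sum_{\psi\in H^\perp}\hf(\chi_i\psi)\psi(g)$, which is exactly the paper's choice. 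For the lower bound the paper packages the same idea as nonsingularity of the $2\times2$ matrix with rows $(\chi_1(g_j),\chi_2(g_j))$ for distinct $g_1,g_2$ in a common $H$-coset; the determinant is $\chi_1(g_1)\chi_1(g_2)\big(\eta(g_2)-\eta(g_1)\big)$, so this is equivalent to your injectivity of $\eta|_H$ on the prime group $H$.
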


\begin{proof}
By the inversion formula, if $\supp\hf\seq\chi_1 H^\perp\cup\chi_2 H^\perp$,
then for any $g\in G$ we have
  $$ f(g) = \chi_1(g) \sum_{\psi\in H^\perp}\hf(\chi_1\psi)\psi(g)
           + \chi_2(g) \sum_{\psi\in H^\perp}\hf(\chi_2\psi)\psi(g), $$
and the existence of the functions $f_1,f_2$ follows by observing that the
two sums in the right-hand side depend on the coset $g+H$ only. Conversely,
it is easily seen that if $f$ is of the indicated form, then
 $\supp\hf\seq\chi_1 H^\perp\cup\chi_2 H^\perp$.

Furthermore, the estimate $|\supp f|\le N$ is immediate. For the remaining
estimate $|\supp f|\ge \big(1-|H|^{-1}\big)\,N$, we notice that if, for some
$g\in G$, at least one of $f_1(g)$ and $f_2(g)$ is nonzero, then all but at
most one element of the coset $g+H$ lie in $\supp f$, as it follows from the
nonsingularity of the matrices
  $$ \begin{pmatrix}
       \chi_1(g_1) & \chi_2(g_1) \\
       \chi_1(g_2) & \chi_2(g_2)
     \end{pmatrix}, \qquad g_1,g_2\in g+H,\ g_1\ne g_2 $$
(this is where primality of $H$ is required).
\end{proof}

\begin{lemma}\label{l:twononparallel}
Suppose that $G=H_1\oplus H_2$ is a decomposition of the finite abelian group
$G$ into a direct sum of nonzero subgroups $H_1,H_2<G$, and let
 $f\in L(G)$. For $\supp\hf$ to be contained in a union of a coset of
$H_1^\perp$ and a coset of $H_2^\perp$, it is necessary and sufficient that
there existed functions $f_1\in L(H_1)$ and $f_2\in L(H_2)$ and a character
$\chi\in\hG$ such that
  $$ f(h_1+h_2) = \chi(h_1+h_2)\big(f_1(h_1)+f_2(h_2)\big),
                                              \quad h_1\in H_1,h_2\in H_2. $$
Moreover, if in this case $|\supp f|<\frac12\,|G|$, then the functions
$f_1,f_2$ can be so chosen that
  $$ |\supp f| \le |H_1||\supp f_2|+|H_2||\supp f_1|
                      \le \bigg(1+\frac{2|\supp f|}{|G|}\bigg)\,|\supp f|. $$
\end{lemma}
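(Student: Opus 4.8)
The plan is to prove the stated equivalence first, and then the support estimate; both parts are handled by passing to the twisted function $g:=\ochi f$, for which $g(h_1+h_2)=f_1(h_1)+f_2(h_2)$ exactly when $f$ has the asserted form, and $\supp g=\supp f$ since $\chi$ vanishes nowhere.

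\emph{The equivalence.} Because $G=H_1\oplus H_2$, every $\eta\in\hG$ factors uniquely as $\eta=\eta_1\eta_2$ with $\eta_1\in H_2^\perp$ trivial on $H_2$ and $\eta_2\in H_1^\perp$ trivial on $H_1$, and $H_1^\perp\cap H_2^\perp=\{1\}$ since $H_1+H_2=G$. For sufficiency, if $g(h_1+h_2)=f_1(h_1)+f_2(h_2)$, then separating variables in the defining sum for $\wh g$ one finds that $\wh g(\eta)$ vanishes unless $\eta\in H_1^\perp\cup H_2^\perp$ — the factor $\sum_{h_i\in H_i}\overline{\eta(h_i)}$ forces this; since $\wh f(\eta)=\wh g(\ochi\eta)$ we get $\supp\wh f\seq\chi H_1^\perp\cup\chi H_2^\perp$. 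For necessity, given $\supp\wh f\seq\chi_1 H_1^\perp\cup\chi_2 H_2^\perp$, let $\chi$ be the unique character with $\chi\vert_{H_i}=\chi_i\vert_{H_i}$ for $i=1,2$ (it exists since restriction gives $\hG\cong\wh{H_1}\times\wh{H_2}$); then $\chi_iH_i^\perp=\chi H_i^\perp$, and $\wh g(\eta)=\wh f(\chi\eta)$ gives $\supp\wh g\seq H_1^\perp\cup H_2^\perp$. The inversion formula together with inclusion--exclusion for this union (intersection $\{1\}$) yields
  $$ g(x)=\sum_{\eta\in H_1^\perp}\wh g(\eta)\eta(x)+\sum_{\eta\in H_2^\perp}\wh g(\eta)\eta(x)-\wh g(1), $$
where the first sum depends only on the $H_2$-component of $x$ and the second only on the $H_1$-component; this exhibits $g$ in the form $g(h_1+h_2)=f_1(h_1)+f_2(h_2)$, so that $f=\chi g$ has the required shape.

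\emph{The support estimate.} Write $n_i:=|H_i|$ (so $n_1n_2=|G|$) and $m:=|\supp f|=|\supp g|<\tfrac12|G|$. Since any two decompositions $g(h_1+h_2)=f_1(h_1)+f_2(h_2)$ differ only by adding a constant to $f_1$ and subtracting it from $f_2$, we may normalize: fix one decomposition $g(h_1+h_2)=F_1(h_1)+F_2(h_2)$, choose $v_1$ to be a value of $F_1$ attained with maximal multiplicity $p_1:=|\{h_1\in H_1\colon F_1(h_1)=v_1\}|$, and take $f_1:=F_1-v_1$, $f_2:=F_2+v_1$, so that $A:=|\supp f_1|=n_1-p_1$ and $B:=|\supp f_2|=n_2-q_1$ where $q_1:=|\{h_2\in H_2\colon F_2(h_2)=-v_1\}|$. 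The left-hand inequality is the obvious inclusion $\supp g\seq(\supp f_1+H_2)\cup(H_1+\supp f_2)$ (a union of Minkowski sums of sizes $|\supp f_1|\,|H_2|$ and $|H_1|\,|\supp f_2|$). For the right-hand inequality, count the zeros of $g$ by the value of $F_1$: isolating $v=v_1$,
  $$ |G|-m=\sum_v\bigl|\{F_1=v\}\bigr|\bigl|\{F_2=-v\}\bigr|\le p_1q_1+(n_1-p_1)(n_2-q_1), $$
which rearranges to $n_2A+n_1B\le m+2AB$; using instead the cruder bound $\sum_v|\{F_1=v\}|\,|\{F_2=-v\}|\le p_1n_2$ gives $n_2A\le m$.

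From $n_2A\le m<\tfrac12|G|$ we get $n_1-2A>0$, so $n_2A+n_1B\le m+2AB$ yields $B\le(m-n_2A)/(n_1-2A)$, and hence, writing $t:=n_2A\in[0,m]$,
  $$ |H_1|\,|\supp f_2|+|H_2|\,|\supp f_1|=n_2A+n_1B\le t+\frac{|G|(m-t)}{|G|-2t}=:\phi(t). $$
An elementary computation gives $\phi(0)=\phi(m)=m$ and $\max_{t\in[0,m]}\phi(t)=|G|\bigl(1-\sqrt{1-2m/|G|}\bigr)$, attained at $t=\tfrac12\bigl(|G|-\sqrt{|G|(|G|-2m)}\bigr)$. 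Finally, with $\delta:=m/|G|\in[0,\tfrac12)$ both sides of $1-\sqrt{1-2\delta}\le\delta+2\delta^2$ are nonnegative, and squaring reduces this to $(2\delta-1)(2\delta+3)\le0$, which holds. Combining, $|H_1|\,|\supp f_2|+|H_2|\,|\supp f_1|\le|G|(\delta+2\delta^2)=\bigl(1+2|\supp f|/|G|\bigr)|\supp f|$, as claimed. The only genuinely nontrivial point is choosing the right normalization (zeroing out the largest fibre of $F_1$) and then carrying out the optimization of $\phi$; the rest is routine bookkeeping with the Fourier transform.
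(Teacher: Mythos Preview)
Your proof is correct. The equivalence part is essentially the paper's argument, cleanly packaged by passing to $g=\ochi f$ and using inclusion--exclusion on $H_1^\perp\cup H_2^\perp$.

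For the support estimate, however, you take a genuinely different route. Both you and the paper normalize $f_1$ so that $0$ is its most frequent value, and both arrive at the key inequality $n_2A+n_1B\le m+2AB$ (your notation). At this point the paper proves an additional structural fact: under the hypothesis $m<\tfrac12|G|$, zero is \emph{automatically} the most frequent value of $f_2$ as well (the ``$z_0=0$'' step). This yields the symmetric bound $m\ge\max\{n_2A,\,n_1B\}$, hence $AB\le m^2/|G|$, and the desired inequality follows in one line. You instead use only the asymmetric bound $n_2A\le m$, treat $t=n_2A$ as a free parameter in $[0,m]$, and maximize $\phi(t)=t+|G|(m-t)/(|G|-2t)$ explicitly, checking that $\max\phi=|G|\bigl(1-\sqrt{1-2m/|G|}\bigr)\le m+2m^2/|G|$. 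Your approach trades the paper's combinatorial observation for a routine one-variable optimization; it is slightly longer but entirely mechanical, and has the mild advantage of never needing to know that $q_1$ is maximal. One small wording issue: in the final step you should first rearrange to $1-\delta-2\delta^2\le\sqrt{1-2\delta}$ (both sides nonnegative, since the left factors as $(1-2\delta)(1+\delta)$) before squaring; squaring the inequality as you wrote it does not eliminate the radical.
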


\begin{proof}
Sufficiency is easy to verify. For the necessity, let $\chi$ be the character
lying in both cosets on which $\hf$ is supported. By the inversion formula,
for any $h_1\in H_1$ and $h_2\in H_2$ we have
\begin{align*}
  f(h_1+h_2) &= \sum_{\psi\in H_1^\perp} \hf(\chi\psi)\,\chi\psi(h_1+h_2)
                   + \sum_{\sub{\psi\in H_2^\perp \\ \psi\ne 1}}
                                          \hf(\chi\psi)\,\chi\psi(h_1+h_2) \\
       &= \chi(h_1+h_2) \sum_{\psi\in H_1^\perp} \hf(\chi\psi)\psi(h_2)
            + \chi(h_1+h_2) \sum_{\sub{\psi\in H_2^\perp \\ \psi\ne 1}}
                                                    \hf(\chi\psi)\psi(h_1),
\end{align*}
and we let
  $$ f_1(h_1) := \sum_{\sub{\psi\in H_2^\perp \\ \psi\ne 1}}
                                      \hf(\chi\psi)\psi(h_1),\ h_1\in H_1 $$
and
  $$ f_2(h_2)
         := \sum_{\psi\in H_1^\perp} \hf(\chi\psi)\psi(h_2),\ h_2\in H_2. $$

Turning to the second assertion, the inequality
  $$ |\supp f| \le |H_2||\supp f_1| + |H_1||\supp f_2| $$
is immediate (if $h_1+h_2\in\supp f$, where $h_1\in H_1,\ h_2\in H_2$, then
either $h_1\in\supp f_1$, or $h_2\in\supp f_2$), and we proceed to prove the
remaining inequality. We write for brevity $S:=\supp f$ and $Z:=G\stm S$, and
assume that $|Z|>\frac12\,|G|$. For $j\in\{1,2\}$ let
  $$ I_j:=\mathrm{Im}(f_j)\ \text{and}
                      \ \nu_j(z):=|\{h\in H_j\colon f_j(h)=z\},\ z\in\C. $$
Having $f_1$ and $f_2$ suitably translated, we further assume that
  $$ \nu_1(0) = \max \{ \nu_1(z)\colon z\in I_1 \}, $$
and we choose $z_0\in I_2$ with
  $$ \nu_2(z_0) = \max \{ \nu_2(z)\colon z\in I_2 \} $$
and write $m_1:=\nu_1(0)$ and $m_2:=\nu_2(z_0)$.

From
  $$ \frac12\,|G| < |Z| = \sum_{z\in I_1\cap(-I_2)} \nu_1(z)\nu_2(-z)
            \le m_1 \sum_{z\in I_2}\nu_2(z) = m_1|H_2| $$
we conclude that $m_1>\frac12\,|H_1|$, and similarly $m_2>\frac12\,|H_2|$.
Consequently, if we had $z_0\ne 0$, this would imply
\begin{align*}
  \frac12\,|G|
     &<   |Z| \\
     &\le m_1(|H_2|-m_2)+(|H_1|-m_1)m_2 \\
     &=   m_1|H_2| - (2m_1-|H_1|) m_2 \\
     &<   m_1|H_2| - (2m_1-|H_1|)\cdot\frac12\,|H_2| \\
     &=   \frac12\,|G|,
\end{align*}
a contradiction. Thus, $z_0=0$; as a result, writing $n_j:=|H_j|-m_j$ and
observing that $n_j<\frac12\,|H_j|\ (j\in\{1,2\})$ we get
\begin{align*}
  |S| &\ge m_1(|H_2|-m_2)+(|H_1|-m_1)m_2 \\
      &=   n_1|H_2|+n_2|H_1|-2n_1n_2 \\
      &\ge \max\{n_1|H_2|,n_2|H_1|\}.
\end{align*}
Hence, $n_1n_2\le |S|^2/|G|$, which further leads to
  $$ n_1|H_2|+n_2|H_1| \le |S| + 2n_1n_2 \le |S| + \frac{2|S|^2}{|G|}. $$
It remains to notice that $n_j=|\supp f_j|,\ j\in\{1,2\}$.
\end{proof}

Finally, we prove Corollary~\refc{uppergray}.
\begin{proof}[Proof of Corollary~\refc{uppergray}]
Without loss of generality we assume that $3\le|X|\le|S|$, and that we are
not in the exceptional situation where $X$ is a coset of a nonzero subgroup
of the corresponding group, possibly with one element missing, and $S$ is
either a coset, or a union of two cosets of the orthogonal subgroup. Also, we
assume that $|S|\le p^2-2p$ as otherwise the assertion follows in view of
$|X|\ge 3$.

If $|S|\le 2p-1$, then we use Theorem~\reft{k=p-1} to get
$\frac1{p-1}\,|X|+\frac12\,|S|\ge p+1$; this gives
  $$ |S||X| \ge \frac12\,(p-1)|S|(2p+2-|S|)
                            \ge \frac12\,(p-1)\cdot3(2p-1) > 3p(p-2). $$

If $2p\le|S|\le 3p-1$, then we apply Theorem~\reft{k=p-2} to get either
$\frac1{p-2}\,|X|+\frac13\,|S|\ge p+1$, or $|X|\ge\frac32(p-1)$. In the
former case
  $$ |S||X| \ge \frac13(p-2)|S|(3p+3-|S|)
                                 \ge \frac43\,(p-2)(3p-1) > 3p(p-2), $$
in the latter case $|S||X|\ge 3p(p-1)>3p(p-2)$.

Finally, if $|S|\ge 3p$, then using Theorem~\reft{meshulam} and the
assumption $|S|\le p^2-2p$ we get
  $$ |S||X| \ge \frac1p\,|S|\left(p^2+p-|S|\right) \ge 3p(p-2). $$
\end{proof}

\section*{Acknowledgement}
We are grateful to Miki Simonovits for his kind assistance with the
illustrations.

\vfill

\bigskip

\end{document}